\DeclareMathOperator{\Res}{\ensuremath{Res}}
\DeclareMathOperator{\spec}{\ensuremath{Spec}}
\DeclareMathOperator{\algk}{\ensuremath{Alg_\mathbb{K}}}
\DeclareMathOperator{\leff}{\ensuremath{less}}
\DeclareMathOperator{\lred}{\ensuremath{lred}}
\DeclareMathOperator{\lin}{\ensuremath{lin}}
\DeclareMathOperator{\cdim}{\ensuremath{codim}}
\DeclareMathOperator{\disc}{\ensuremath{Disc}}
\DeclareMathOperator{\dom}{\ensuremath{Dom}}
\newcommand{\ch}{\mathfrak{C}}
\newenvironment{customcon}[1]
  {\innercustomcon}
  {\endinnercustomcon}
\theoremstyle{plain}
\newtheorem{theorem}{Theorem}[section]
\theoremstyle{plain}
\newtheorem{lemma}[theorem]{Lemma}
\theoremstyle{plain}
\newtheorem{proposition}[theorem]{Proposition}
\theoremstyle{plain}
\newtheorem{corollary}[theorem]{Corollary}
\newtheorem*{proposition*}{Proposition}
\theoremstyle{plain}
\theoremstyle{definition}
\newtheorem{definition}[theorem]{Definition}
\newtheorem{remark}[theorem]{Remark}
\newtheorem{question}[theorem]{Question}
\numberwithin{equation}{section}
\theoremstyle{plain}
\begin{document}

\title{A finiteness result towards the Casas-Alvero conjecture}

\author{Soham Ghosh}
\address{Department of Mathematics, University of Washington, Seattle, WA 98195, USA}
\email{\tt soham13@uw.edu }

\keywords{Casas-Alvero conjecture, complete intersection, Hasse-Schmidt derivations, higher discriminants}

\subjclass[2020]{12E05, 13C15, 13N15, 14G05, 14M10}
\date{\today}
\mathtoolsset{showonlyrefs}

\begin{abstract}
The Casas-Alvero conjecture predicts that every univariate polynomial over an algebraically closed field of characteristic zero sharing a common factor with each of its Hasse-Schmidt derivatives is a power of a linear polynomial. The conjecture for polynomials of a fixed degree is equivalent to the projective variety of such polynomials being one-dimensional. In this paper, we show that for any algebraically closed field of arbitrary characteristic, this variety is at most two-dimensional for all positive degrees. Consequently, we show that the associated arithmetic Casas-Alvero scheme in any positive degree has finitely many rational points over any field. Along the way, we prove several rigidity results towards the conjecture. We also introduce intermediate arithmetic Casas-Alvero schemes and show that their $\mathbb{K}$ points form an almost complete intersection over any algebraically closed field $\mathbb{K}$. Furthermore, we consider the question of when they form a complete intersection.
\end{abstract}
\maketitle
\tableofcontents

\section{Introduction}\label{sec1}
\renewcommand*{\thetheorem}{\Alph{theorem}}
\renewcommand*{\thecorollary}{\Alph{theorem}}

\subsection{Aim and main results of the paper}\label{subsec1.1}
Let $\mathbb{K}$ be a field and $f(X)\in \mathbb{K}[X]$ be a monic polynomial of degree $n>1$. Let $f^{(i)}(X):=d^if(X)/dX^i$ be the $i^{th}$ formal derivative of $f(X)$ with respect to $X$ and let $f_i(X)$ be the $i^{th}$ Hasse--Schmidt derivative of $f(X)$. Over fields of characteristic $0$, the two derivatives are related via $f_i(X)=f^{(i)}(X)/i!$. This paper is concerned with the following question posed by E. Casas-Alvero in connection with his work \cite{CA} on higher-order polar germs:
 
\begin{customcon}{CA}[Casas-Alvero, 2001]\label{con1}
    Let $f(X)$ be a monic univariate polynomial of degree $n$ over a field $\mathbb{K}$. Then $\gcd(f, f_{i})$ is non-trivial for each $i=1, \dots, n-1$ if and only if $f(X)=(X-\alpha)^n$ for some $\alpha\in \mathbb{K}$.
\end{customcon}

In \cite{GVB}, the authors prove Conjecture~\ref{con1} (over characteristic $0$) and in degrees $n=p^k$ and $2p^k$ for any prime $p$ and $k\in \mathbb{N}$ by reformulating the problem over any field $\mathbb{K}$ (irrespective of characteristic) as the absence of $\mathbb{K}$-rational points on a certain weighted projective $\mathbb{Z}$-scheme $X_n\subseteq \mathbb{P}_{\mathbb{Z}}(1,2,\dots, n-1)$. We refer to $X_n$ as the $n^{th}$ \textit{arithmetic Casas-Alvero scheme}. As a consequence of the methods of \cite{GVB}, it follows that, for algebraically closed fields and a fixed degree $n$, Conjecture~\ref{con1} depends only on the characteristic of the field $\mathbb{K}$, and if it is true over a certain characteristic, then it is true in all characteristics except finitely many primes. These results were improved in \cite{DJ}, where the authors used $p$-adic valuation techniques to also prove Conjecture~\ref{con1} (over characteristic $0$) in degrees $3 p^k$ and $4p^k$ for primes $p>3$, excepting degrees $n=4.5^k$ and $4.7^k$.  The case of $d=5p^k$ and corresponding bad primes were studied in \cite{SC}. Furthermore, the Conjecture in degrees $d=6p^k$, $7p^k$ (and $5p^k$ as well) has been verified in \cite{CLO} with the aid of a computer, barring the bad primes in each case, which were also completely identified. 

The main goal of this paper is to prove a finiteness result towards Conjecture~\ref{con1} for degree $n\geq 3$ over any field (of arbitrary characteristic).

We introduce the notion of (higher) D-subschemes of affine $\mathbb{K}$-schemes and various refinements (see Definitions~\ref{Def1}, \ref{Def4}, \ref{Def5}) to encode derivations, along with characteristic maps (see Definition~\ref{Def7}) for shift equivalence classes of monic polynomials $f(X)\in\mathbb{K}[X]$. These enable us to reformulate Conjecture~\ref{con1} (for $\mathbb{K}$ algebraically closed) as a complete intersection problem for (higher) discriminant hypersurfaces $\operatorname{Disc}^i_n\subseteq \mathbb{A}^n_{\mathbb{K}}$ ($1\leq i\leq n-1$). We relate these hypersurfaces to a union $X^i_n$ of certain involutions of D-subschemes of reduced principal monomial affine $\mathbb{K}$-schemes by the use of ``Vieta's map" $\nu_n: \mathbb{A}^n_{\mathbb{K}}\rightarrow \mathbb{A}^n_{\mathbb{K}}$. Furthermore, Vieta's map yields a regular map $\overline{\nu}_{n}:\mathbb{P}^{n-1}_{\mathbb{K}}\rightarrow \mathbb{P}_{\mathbb{K}}(1,2,\dots, n-1)$ from straight projective space to a weighted one, using which we relate the projectivization of $\bigcap_{i=1}^{n-1}X^i_n(\mathbb{K})$ in $\mathbb{P}^{n-1}_{\mathbb{K}}(\mathbb{K})$ and $X_n(\mathbb{K})\subseteq \mathbb{P}_{\mathbb{K}}(1,2,\dots, n-1)$ (see Section~\ref{subsec3.3}). For any algebraically closed field $\mathbb{K}$, we prove the following dimension bound:
 \begin{theorem}[=Theorem~\ref{PropNew}]\label{mainthm2}
     For $n\geq 3$ and any algebraically closed field $\mathbb{K}$, $\dim \bigcap_{i=1}^{n-1}X^{i}_n(\mathbb{K}) \leq 2$.
 \end{theorem}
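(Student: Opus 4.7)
The plan is to leverage the description of each $X^i_n$ as a finite union of involutions of D-subschemes of reduced principal monomial affine $\mathbb{K}$-schemes, indexed by combinatorial ``common-root patterns'' between $f$ and its Hasse--Schmidt derivative $f_i$. Distributing the intersection over these unions yields
\[
\bigcap_{i=1}^{n-1} X^i_n \;=\; \bigcup_{(P_1, \ldots, P_{n-1})} \,\bigcap_{i=1}^{n-1} Y^{i, P_i}_n,
\]
so it suffices to bound the dimension of each piece on the right by $2$ and take the maximum.

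Before the bookkeeping, I would fix the target dimension $2$ by identifying the two responsible symmetries. The shift action $\mathbb{G}_a : (r_1, \ldots, r_n) \mapsto (r_1 + c, \ldots, r_n + c)$ and the scaling action $\mathbb{G}_m : (r_i) \mapsto (\lambda r_i)$ both preserve every $X^i_n$, since Hasse--Schmidt derivatives commute with shifts in the variable and are homogeneous under scaling of the roots. Applied to the diagonal $\{(\alpha, \ldots, \alpha)\}$, corresponding to the trivial Casas--Alvero solution $f = (X-\alpha)^n$, these two $1$-dimensional actions generate a $2$-dimensional locus inside $\bigcap X^i_n$. These are precisely the two symmetries quotiented out when passing through Vieta's map $\overline{\nu}_n$ to the weighted projective scheme $X_n \subseteq \mathbb{P}_{\mathbb{K}}(1, 2, \ldots, n-1)$, so the bound $\dim \leq 2$ is equivalent to the assertion that $X_n$ has dimension $\leq 0$, which is the finiteness goal motivating the paper.

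For each piece $\bigcap_i Y^{i, P_i}_n$, I would use the codimension formula for D-subschemes of reduced principal monomial schemes (established earlier in the paper) to see that the generic codimension imposed by a tuple of patterns, after removing the two symmetry directions, is at least $n - 2$, so that the intersection has dimension at most $2$. Non-generic combinations must then be either empty or reducible to a lower-dimensional piece, which is where the explicit involution structure and the transversality of the derivation constraints come in. The main technical obstacle is precisely this transversality check: for every admissible pattern tuple $(P_1, \ldots, P_{n-1})$, one must verify that the derivation conditions impose at least $n - 2$ independent constraints modulo the $\mathbb{G}_a \times \mathbb{G}_m$-action, or else show the piece is contained in a single symmetry orbit. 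I expect this to proceed by induction on $n$, with the base case $n = 3$ handled by direct computation, and the inductive step exploiting the fact that if a pattern $P_i$ degenerates (forcing a root collision) then the corresponding piece embeds into a Casas--Alvero problem for smaller degree.
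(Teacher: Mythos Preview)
Your proposal is a plan rather than a proof, and the central gap is precisely the step you flag yourself: the ``transversality check'' that for every index tuple $(j_1,\dots,j_{n-1})$ the $n-1$ homogeneous polynomials $\Phi^{\#}_{j_i}(HD^{i-1}_{n-1}\mathbf{x}_{n-1})$ cut out a locus of dimension at most $1$ in $\mathbb{A}^{n-1}$. This is the entire content of the theorem, and you offer no mechanism for it beyond an unspecified induction on $n$. Induction is not promising here: the involutions $\Phi_{j}$ are affine changes of variables that entangle all $n-1$ coordinates, so a piece $\bigcap_i V(\Phi^{\#}_{j_i}(HD^{i-1}_{n-1}\mathbf{x}_{n-1}))$ does not naturally restrict to a degree-$(n-1)$ instance unless a root collision is forced, and nothing in your outline forces one. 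The ``codimension formula for D-subschemes of reduced principal monomial schemes'' in the paper (Proposition~\ref{Prop2}) describes $\mathscr{D}^{j}\mathscr{S}_n(\mathbf{r})(\mathbb{K})$ set-theoretically but does not by itself give you the needed independence of the $\Phi_{j_i}$-twisted conditions.

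The paper's proof bypasses transversality entirely by a deformation argument you do not mention. It introduces a parameter $T$ via maps $\Phi^{\#}_{j}[T]$ interpolating between the identity at $T=0$ and $\Phi^{\#}_{j}$ at $T=1$. At $T=0$ every $\Phi^{\#}_{j_i}[0](HD^{i-1}_{n-1}\mathbf{x}_{n-1})$ collapses to the elementary symmetric polynomial $e_{n-i}(x_1,\dots,x_{n-1})$, whose common zero locus is the origin alone. The total space $Y(j_1,\dots,j_{n-1})\subset\mathbb{A}^{n-1}\times\mathbb{A}^1_T$ is therefore $1$-dimensional (it is cut by $n-1$ hypersurfaces and meets $\{T=0\}$ in a point, so Hartshorne~I.7.1 forces $\dim Y\le 1$). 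Since the map to $\mathbb{A}^1_T$ is surjective, the fiber over $T=1$---which is exactly your piece---has dimension at most $1$. Adding back the $\mathfrak{p}_n$-fiber direction gives the bound $\le 2$. This is a one-shot semicontinuity argument, uniform in $(j_1,\dots,j_{n-1})$, with no induction and no case analysis.

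A smaller error: your claim that the $\mathbb{G}_a\times\mathbb{G}_m$-orbit of the diagonal $\{(\alpha,\dots,\alpha)\}$ is $2$-dimensional is false, since the diagonal is invariant under both actions and remains $1$-dimensional. The correct statement (used in the paper's Proposition~\ref{Prop4}) is that any \emph{non-diagonal} point would have a $2$-dimensional orbit; the diagonal itself only witnesses $\dim\ge 1$.
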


  We prove Theorem~\ref{mainthm2} by decomposing $\dim \bigcap_{i=1}^{n-1}X^{i}_n(\mathbb{K})$ as a union $\bigcup_{1\leq j_1,\dots, j_{n-1}\leq n}Z_{j_1,\dots, j_{n-1}}(\mathbb{K})$ and encoding each $Z_{j_1,\dots, j_{n-1}}(\mathbb{K})$ as a deformation of a $1$-dimensional complete intersection via a family $\varphi^{\star}: Y(j_1,\dots, j_{n-1})\rightarrow \mathbb{A}_{\mathbb{K}}^1$, for each choice of indices $1\leq j_1,\dots, j_{n-1}\leq n$. We show that each $Z_{j_1,\dots, j_{n-1}}$ is at most $2$-dimensional, from which Theorem~\ref{mainthm2} follows. This is done by showing that the family $Y(j_1,\dots, j_{n-1})$ is a $1$-dimensional Cohen-Macaulay scheme away from possibly one fiber (see Proposition~\ref{MainProp}).  We then obtain the following result as an immediate corollary.

\begin{theorem}[=Corollary~\ref{maincor1}]\label{mainthm}
    For all $n\geq3$, $X_n(\mathbb{K})$ is finite for any field $\mathbb{K}$.
\end{theorem}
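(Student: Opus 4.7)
My plan is to deduce finiteness of $X_n(\mathbb{K})$ directly from the dimension bound in Theorem~\ref{mainthm2}, by tracking dimensions through the projectivized Vieta map and the identification set up in Section~\ref{subsec3.3}.

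First reduce to $\mathbb{K}$ algebraically closed. Any $\mathbb{K}$-rational point of $X_n$ is also an $\overline{\mathbb{K}}$-rational point, so $X_n(\mathbb{K}) \subseteq X_n(\overline{\mathbb{K}})$ and it suffices to prove $X_n(\overline{\mathbb{K}})$ is finite. Assume now $\mathbb{K} = \overline{\mathbb{K}}$. Theorem~\ref{mainthm2} then yields $\dim \bigcap_{i=1}^{n-1} X_n^i(\mathbb{K}) \leq 2$ as a closed subset of $\mathbb{A}^n_{\mathbb{K}}$ (in root coordinates, where Vieta's map $\nu_n$ lives).

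Next exploit the two commuting one-parameter group actions that each $X_n^i$ inherits from the Casas--Alvero setup: the homothety $(r_1,\dots,r_n)\mapsto(\lambda r_1,\dots,\lambda r_n)$ on root-space, corresponding to $f(X)\mapsto \lambda^n f(X/\lambda)$, and the diagonal translation $(r_1,\dots,r_n)\mapsto(r_1+c,\dots,r_n+c)$, corresponding to $f(X)\mapsto f(X-c)$. Both actions preserve the vanishing of each $\gcd(f,f_i)$, hence preserve the entire intersection $\bigcap X_n^i$. Quotienting by the homothety yields a closed subset of $\mathbb{P}^{n-1}_{\mathbb{K}}$ of dimension at most $1$. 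Applying $\overline{\nu}_n: \mathbb{P}^{n-1}_{\mathbb{K}}\to \mathbb{P}_{\mathbb{K}}(1,2,\dots,n-1)$, which effects the further geometric quotient by the shift action (reflected in the drop of target dimension from $n-1$ to $n-2$), and invoking the comparison of Section~\ref{subsec3.3} identifying $X_n(\mathbb{K})$ with (a subset of) the image of the projectivization of $\bigcap X_n^i(\mathbb{K})$ under $\overline{\nu}_n$, we obtain $\dim X_n(\mathbb{K}) \leq 0$.

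A $0$-dimensional closed subscheme of the Noetherian scheme $\mathbb{P}_{\mathbb{K}}(1,2,\dots,n-1)$ is necessarily finite, yielding the corollary. The substantive input is Theorem~\ref{mainthm2}; what remains is dimension bookkeeping, whose one nontrivial point is the generic independence of the homothety and shift orbits, so that quotienting removes exactly two dimensions rather than one. This independence is already packaged into the factorization of $\overline{\nu}_n$ through scaling and shift quotients described in Section~\ref{subsec3.3}, so no new work is needed once Theorem~\ref{mainthm2} is in hand.
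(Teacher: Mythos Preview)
Your overall strategy---reduce to algebraically closed $\mathbb{K}$, then peel off the two one-parameter symmetries (homothety and shift) from the bound $\dim\bigcap_i X^i_n(\mathbb{K})\le 2$---matches the paper's. The gap is in how you implement the shift quotient. You claim that $\overline{\nu}_n$ ``effects the further geometric quotient by the shift action,'' citing a drop in target dimension from $n-1$ to $n-2$. This is not correct: the Vieta map $\nu_n$ is the quotient of root space by the permutation action of $\mathfrak{S}_n$, not by translations; the paper explicitly records that $\overline{\nu}_n$ is quasi-finite, so it does not lower dimension. The natural target of the projectivized Vieta map has the same dimension $n-1$ as the source; the scheme $X_n$ lives inside the hyperplane $\{y_n=0\}\cong\mathbb{P}(1,\dots,n-1)$, but that restriction is exactly the step you have not carried out. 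Applying $\overline{\nu}_n$ to the projectivization of $\bigcap_i X^i_n(\mathbb{K})$ therefore only yields something of dimension $\le 1$, not $\le 0$, and the argument as written does not close.

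The paper implements the shift reduction by slicing with $V(x_n)$ rather than by invoking $\overline{\nu}_n$. Using \eqref{Eqn8} and the description of each $X^i_n$ as a $\mathfrak{p}_n$-preimage, one has $\widehat{\mathcal{V}}_n(\mathbb{K})=V(x_n)\cap\bigcap_i X^i_n(\mathbb{K})=\bigcup_{j_1,\dots,j_{n-1}}\bigcap_i V^n_{n-1}(\Phi^{\#}_{j_i}(HD^{i-1}_{n-1}\mathbf{x}_{n-1}))$, and the $\le 1$ bound on each of these pieces (established inside the proof of Theorem~\ref{PropNew}) gives $\dim\widehat{\mathcal{V}}_n(\mathbb{K})\le 1$ directly. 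Projectivizing makes $\mathcal{V}_n(\mathbb{K})$ finite, and then the finite map $\overline{\nu}_n$ together with Proposition~\ref{Prop:Vieta} finishes. Your argument is easily repaired along the same lines: since $\bigcap_i X^i_n(\mathbb{K})$ is $\mathfrak{p}_n$-saturated and $\mathfrak{p}_n$ has $1$-dimensional fibers, its intersection with $V(x_n)=Z^n_n(\mathbb{K})$ equals $\mathfrak{p}_n(\bigcap_i X^i_n(\mathbb{K}))$ and has dimension exactly one less; then projectivize and apply $\overline{\nu}_n$. The point is that the shift quotient is realized by the slice $V(x_n)$, not by Vieta.
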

 
Concretely, Theorem~\ref{mainthm} then says that for any $n\geq 2$, over any field $\mathbb{K}$, there are at most finitely many counterexamples to Conjecture~\ref{con1} in degree $n$ up to affine transformations (i.e., transformations of the form $f(X)\mapsto a^{-n}f(aX+b)$, for $a\in \mathbb{K}^{\times}$, $b\in\mathbb{K}$). Our method of proof also enables us to obtain a cohomological upper bound on the number of rational points $|X_n(\mathbb{K})|$ for any field $\mathbb{K}$ (see Corollary~\ref{corpointbound}). Consequently, we obtain:

\begin{corollary}[=Corollary~\ref{maincor2}]
    $X_n$ is a finite $\mathbb{Z}$-scheme of dimension $\leq 1$ for all $n\geq 3$. In particular, $X_n$ is affine.
\end{corollary}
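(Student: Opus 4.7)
The plan is to deduce this corollary directly from Theorem~\ref{mainthm} together with the projective embedding of $X_n$. Since $X_n$ is by construction a closed subscheme of the (weighted) projective $\mathbb{Z}$-scheme $\mathbb{P}_{\mathbb{Z}}(1,2,\dots,n-1)$, the structure morphism $\pi: X_n \to \spec(\mathbb{Z})$ is proper and of finite type. In particular, $X_n$ is automatically quasi-compact, so quasi-compactness requires no further argument.

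Next I would establish that every scheme-theoretic fiber of $\pi$ is $0$-dimensional. Fix a point $\mathfrak{p} \in \spec(\mathbb{Z})$ with residue field $\kappa(\mathfrak{p})$ and an algebraic closure $\overline{\kappa(\mathfrak{p})}$. The base change $X_n \times_{\mathbb{Z}} \spec(\overline{\kappa(\mathfrak{p})})$ is a finite-type scheme over an algebraically closed field, and its set of $\overline{\kappa(\mathfrak{p})}$-rational points $X_n(\overline{\kappa(\mathfrak{p})})$ is finite by Theorem~\ref{mainthm} applied with $\mathbb{K} = \overline{\kappa(\mathfrak{p})}$. By Hilbert's Nullstellensatz, these points coincide with the closed points of the base change, so the base change is $0$-dimensional. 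Since dimension is preserved under field extension, the original fiber $X_n \otimes_{\mathbb{Z}} \kappa(\mathfrak{p})$ is $0$-dimensional as well.

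Putting the two observations together, $\pi$ is proper and quasi-finite, and hence a finite morphism by the standard criterion that proper quasi-finite morphisms are finite. The dimension bound now follows: because every fiber of $\pi$ has dimension $0$ and $\dim \spec(\mathbb{Z}) = 1$, the dimension formula for finite morphisms yields $\dim X_n \leq 1$. The entire argument is essentially formal once Theorem~\ref{mainthm} is in hand; the only point that requires a bit of care is to first pass to an algebraically closed residue field before invoking the finiteness of $\mathbb{K}$-rational points, but this is routine and presents no real obstacle.
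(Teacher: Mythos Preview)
Your proof is correct and follows essentially the same route as the paper: both argue that the structure morphism is proper with finite (hence $0$-dimensional) fibers by invoking Theorem~\ref{mainthm}, conclude finiteness via ``proper $+$ quasi-finite $\Rightarrow$ finite,'' and read off the dimension bound. The only cosmetic difference is that you obtain quasi-compactness directly from the closed embedding in $\mathbb{P}_{\mathbb{Z}}(1,\dots,n-1)$, whereas the paper deduces it from finiteness (finite $\Rightarrow$ affine $\Rightarrow$ quasi-compact).
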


We also provide some rigidity implications of Theorem~\ref{mainthm2} for the structure of $\bigcap_{i=1}^{n-1}X^{i}_n(\mathbb{K})$ in Section~\ref{subsec4.2}. By construction, the location of singular fibers of $\varphi^{\star}: Y(j_1,\dots, j_{n-1})\rightarrow \mathbb{A}_{\mathbb{K}}^1$ (for all $1\leq j_1,\dots, j_{n-1}\leq n$) is intimately related with Conjecture~\ref{con1}. We provide the following constraint for the singular fibers of these families using reduction mod $p$ methods, when $\mathbb{K}$ is algebraically closed of characteristic $0$ (see Theorem~\ref{thmsingular}). 

\begin{theorem}[=Theorem~\ref{thmsingular}]\label{mainthm3}
Let $\mathbb{K}$ be an algebraically closed field of characteristic $0$. There are no singular $\mathbb{Q}$-rational fibers (i.e. fibers over $\mathbb{A}^1_{\mathbb{K}}(\mathbb{Q})$) of $\varphi^{\star}: Y(j_1,\dots, j_{n-1})\rightarrow \mathbb{A}^1_\mathbb{K}$ outside $\{ \frac{1}{m}, \ m\in \mathbb{Z}\setminus\{0\}\}\subseteq \mathbb{Q}$. Furthermore, for all $|m|\geq 2$, there exists a finite set of primes $\mathcal{P}(m)$, such that the fibers $(\varphi^{\star})^{-1}(\frac{1}{m^{p-2}})$ are non-singular for all $p\notin \mathcal{P}(m)$.
\end{theorem}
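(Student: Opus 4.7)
The plan is to exploit the fact that the family $\varphi^{\star}: Y(j_1,\dots,j_{n-1})\to\mathbb{A}^1_\mathbb{K}$ and its critical locus are constructed from objects (Vieta's map, Hasse--Schmidt derivations, the higher discriminants $\disc^i_n$) that all admit natural $\spec(\mathbb{Z})$-models. Any $\mathbb{Q}$-rational singular fiber therefore spreads out to $\spec(\mathbb{Z}[1/N])$ for some integer $N$, and reduces modulo every prime $p\nmid N$ to a singular fiber in characteristic $p$. By the reformulation of Conjecture~\ref{con1} as a complete intersection problem in Section~\ref{subsec3.3}, each such mod-$p$ singular fiber yields a Casas--Alvero counterexample over $\overline{\mathbb{F}}_p$ in the shift-equivalence class indexed by $(j_1,\dots,j_{n-1})$, whose characteristic parameter reduces to $\overline{t_0}\in\mathbb{F}_p$.

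For the first claim, I would suppose $(\varphi^{\star})^{-1}(t_0)$ is singular with $t_0=a/b\in\mathbb{Q}$ in lowest terms, and run the reduction above for every $p\nmid ab$. The resulting characteristic-$p$ counterexamples, by their structural nature --- governed by the vanishings $HD^i_k(X^p)\equiv 0$ for $0<i<p$ and the Frobenius-twist phenomena behind the known mod-$p$ CA counterexamples of \cite{GVB} --- are constrained to have their characteristic parameter lying in a fixed finite subset $S\subset\mathbb{Z}$ bounded in terms of $n$ alone. Requiring $ab^{-1}\equiv s\pmod p$ for some $s\in S$ and infinitely many $p$ forces $|a|=1$, so $t_0\in\{1/m:m\in\mathbb{Z}\setminus\{0\}\}$.

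For the second claim, fix $|m|\geq 2$ and a prime $p\nmid m$. Fermat's little theorem gives $m^{p-1}\equiv 1\pmod p$, so $1/m^{p-2}\equiv m\pmod p$; the reduction mod $p$ of the fiber over $t_p:=1/m^{p-2}$ is therefore the characteristic-$p$ fiber over $m\in\mathbb{F}_p$. For $|m|\geq 2$, the value $m$ avoids the finite bad set $S\cup(-S)\pmod p$ for all but finitely many $p$, so the mod-$p$ fiber is non-singular. A spreading-out argument, using constructibility of the singular locus of $\varphi^{\star}$ over $\spec(\mathbb{Z})$, then promotes this non-singularity back to characteristic zero. Collecting all the finitely many primes of bad reduction and the finitely many primes where $m$ lies in $S\cup(-S)\pmod p$ into a set $\mathcal{P}(m)$ gives the statement.

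The main obstacle will be producing the finite ``bad set'' $S\subset\mathbb{Z}$ uniformly in $p$. This amounts to tracking precisely how a characteristic-$p$ Casas--Alvero counterexample, lying in a specific D-subscheme involution of a reduced principal monomial affine scheme, specializes the base parameter $t$ of $Y(j_1,\dots,j_{n-1})\to\mathbb{A}^1_\mathbb{K}$; the control should come from analyzing the compatibility between the D-subscheme involutions, the Vieta-transformed coordinates, and reduction mod $p$. Once this uniform-in-$p$ structural classification is in hand, the remaining steps --- Fermat's little theorem and constructibility of singular loci in a flat family --- are essentially formal.
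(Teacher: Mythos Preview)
Your proposal has a genuine gap and, in fact, the deduction in your first claim is incorrect as stated. The paper's proof is also entirely different and much simpler.

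\textbf{The paper's argument.} The key observation you are missing is purely arithmetic: a reduced fraction $t_0=r/s$ lies outside $\{1/m:m\in\mathbb{Z}\setminus\{0\}\}$ if and only if $|r|\neq 1$, i.e.\ the numerator $r$ has a prime divisor $p$ (or $r=0$, in which case any $p\nmid s$ works). The paper clears denominators to get an integral model $Y_{r/s}(j_1,\dots,j_{n-1})\subset\mathbb{P}^{n-1}_{\mathbb{Z}}$ and reduces modulo \emph{that specific} $p\mid r$. Since $r\equiv 0$ and $s$ is a unit mod $p$, the deformed map $\Phi^{\#}[r/s]_{j_i}$ degenerates to the identity, and the defining equations become the elementary symmetric polynomials $e_1,\dots,e_{n-1}$. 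Their only common zero is the origin, so the Proj is empty over $\mathbb{F}_p$. Properness then forces the image of $Y_{r/s}\to\spec(\mathbb{Z})$ to be a proper closed subset, hence to miss the generic point, so the fiber over $\mathbb{Q}$ (and thus over $\mathbb{K}$) is empty. No Casas--Alvero counterexamples enter at all. The second claim is proved by the same mechanism: the first part already shows $Y_m$ has empty generic fiber for $|m|\geq 2$, so by properness $Y_m$ has empty fiber over all but finitely many primes; for such $p\nmid m$, Fermat gives $1/m^{p-2}\equiv m\pmod p$, hence $Y_{1/m^{p-2}}$ has empty fiber over $p$, and one repeats the properness step.

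\textbf{Problems with your approach.} First, the assertion that a mod-$p$ singular fiber at $\overline{t_0}$ ``yields a Casas--Alvero counterexample over $\overline{\mathbb{F}}_p$'' is not justified: CA counterexamples correspond to the fiber at $t=1$ being positive-dimensional, not to arbitrary singular fibers of the family. Second, the ``finite bad set $S\subset\mathbb{Z}$, uniform in $p$'' is exactly the missing content; you acknowledge this, but without it there is no argument. Third, even granting $S$, your conclusion is wrong: from $ab^{-1}\equiv s\pmod p$ for a fixed $s\in S$ and infinitely many $p$ you get $a=sb$, hence (by $\gcd(a,b)=1$) that $b=\pm 1$ and $t_0=s\in S$ is an \emph{integer}. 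This does not give $|a|=1$ or $t_0\in\{1/m\}$. The paper's trick of reducing modulo a prime dividing the \emph{numerator} is precisely what makes the argument work and what is absent from your outline.
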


Finally, we introduce intermediate arithmetic Casas-Alvero schemes $X_n[j]$ for $1\leq j\leq n-1$ (see Definition~\ref{def:intCA}) such that $X_n[n-1]=X_n$ is the $n^{th}$ arithmetic Casas-Alvero scheme. Motivated from Conjecture~\ref{con1}, we consider the question of when $X_n[j](\mathbb{K})$ form a complete intersection (Question~\ref{Question}), where $\mathbb{K}$ is algebraically closed. As a measure of failure of Conjecture~\ref{con1} when it is not true, we consider the maximum value $j_C(n)$ of indices $1\leq j\leq n-1$ for which $X_n[j](\mathbb{K})$ is a complete intersection. Then Conjecture~\ref{con1} is equivalent to the claim that $j_C(n)=n-1$ when $\mathbb{K}$ has characteristic $0$. The main technical result of this section is Proposition~\ref{Prop:intCA}, using which we prove our final result provides a lower bound on $j_C(n)$ for $n\geq 3$.

\begin{theorem}[=Corollary~\ref{cor:jC}]\label{mainthm4}
    Let $\mathbb{K}$ be algebraically closed of characteristic $0$. Let $q(n)$ be the largest number less than or equal to $n$ which is of the form $p^k$ or $2p^k$ for some prime $p$ and $k\in \mathbb{N}$. Then $j_C(n)\geq q(n)-1$.
\end{theorem}

\subsection{Existing results and methods}\label{subsec1.2}

Conjecture~\ref{con1} was originally posed over characteristic $0$ fields $\mathbb{K}$, and it is known that the conjecture holds over $\mathbb{C}$ if and only if it holds over all such fields $\mathbb{K}$. The conjecture does not hold in general over positive characteristic (cf. \cite{GVB} for counterexamples). Furthermore, they show that the truth of Conjecture~\ref{con1} over algebraically closed fields in a particular degree $d$ is independent of the choice of field, and only depends on its characteristic. The first progress over characteristic $0$ was made for degree $\leq 7$ polynomials via computational methods in \cite{TV}. Soon after, in \cite{GVB} the authors related the conjecture for a general polynomial $P(X)$ over any field $\mathbb{K}$ to the absence of $\mathbb{K}$-rational points of a weighted projective $\mathbb{Z}$-subscheme of $\mathbb{P}_{\mathbb{Z}}(1, 2, \dots, n-1)$ defined by vanishing of resultants $\Res_X(P, P_i)$ for all $1\leq i<\deg P$. Their methods utilizing reduction modulo prime arguments, successfully proved the conjecture over fields of characteristic $0$ for polynomials of degree $p^k$, $2p^k$ for any prime $p$. Furthermore, by \cite[Proposition 2.2]{GVB}, if Conjecture~\ref{con1} holds for all polynomials of degree $n$ over fields of characteristic $p$, for some $p=0$ or $p$ prime, then Conjecture~\ref{con1} holds true for degree $n$ polynomials over fields of any characteristic except finitely many primes. These methods were reformulated and extended in \cite{DJ} using $p$-adic valuations, where the authors proved Conjecture~\ref{con1} over characteristic $0$ for polynomials of degrees $3 p^k$ and $4p^k$ for primes $p$ greater than $3$ and $4$ respectively (except $p=5, 7$ when $n=4$). These results are particular instances of the following propositions.

\begin{proposition*}(Proposition~$2.6$, \cite{GVB}).
    For any positive integer $d$ and prime number $p$, if Conjecture~\ref{con1} holds in degree $d$ and characteristic $p$, then Conjecture~\ref{con1} also holds in degree $d p^k$ for any integer $k\geq 0$ in characteristics $p$ and $0$.
\end{proposition*}

\begin{proposition*}(Proposition~$9$, \cite{DJ}).
Let $\nu_p$ be an extension to $\mathbb{C}$ of the $p$-adic valuation on $\mathbb{Q}$ for some prime $p$. Consider the local ring $R:=\{z\in \mathbb{C}\mid \ \nu_p(z)\geq 0\}$ with maximal ideal $M=\{z\in \mathbb{C}\mid \ \nu_p(z)>0\}$ and let $\mathbb{K}_p=R/M$ be its residue field. Suppose that $n=n'p^e$ with $n'<p$ and that Conjecture~\ref{con1} holds for all polynomials in $\mathbb{K}_p[X]$ of degree $n'$. Then the conjecture holds for all polynomials of degree $n$ in $\mathbb{C}[X]$. 
\end{proposition*}

In this spirit, further results for certain polynomials of degree $5 p^k$ were shown in \cite{SC}. Recently, the authors of \cite{DM} have attempted to extend the results of \cite{GVB} by providing a non-exhaustive list of bad primes $p$ for each $n>1$, i.e., primes $p$ such that Conjecture~\ref{con1} does not hold in degree $n$ in characteristic $p$. There have also been several computational studies, cf. \cite{CLO}, where the authors verified the conjecture for polynomials of degrees $d=5 p^k,\ 6 p^k$ and $7 p^k$ barring the bad primes in each case, which were also completely classified. They also studied obstructions to hypothetical counterexamples and have verified the conjecture for degree $12$. (which is missed by the cases considered above). Computational approaches involving Gröbner bases, even though theoretically possible, get practically infeasible for large degrees due to the complexity of resultants. Alternate approaches to the conjecture have involved analytic tools via the Gauss--Lucas theorem. However, analytic approaches so far have been successful only in very low degrees as demonstrated in \cite{DJ}. We refer the readers to the references of the cited articles for further literature on the conjecture.

\subsection{Organization of the paper}\label{subsec1.3} In Section~\ref{sec2}, we introduce (higher) D-subschemes of affine $\mathbb{K}$-schemes over algebraically closed fields $\mathbb{K}$ of characteristic $0$ and provide a characteristic $p$ generalization using Hasse-Schmidt like derivations. We, furthermore, characterize their $\mathbb{K}$-rational points for principal monomial schemes. In Section~\ref{sec3}, we develop the geometry of Conjecture~\ref{con1} by introducing shift equivalence of monic univariate polynomials and their characteristic maps. We construct (higher) discriminant hypersurfaces and relate them to the arithmetic Casas-Alvero schemes $X_n$ of \cite{GVB}. Using this, we prove Theorem~\ref{mainthm} in Section~\ref{sec4} as a consequence of a dimension-bound result (Theorem~\ref{mainthm2}). Both of these are established using the technical commutative algebraic result Proposition~\ref{MainProp}, which is also proved in this section. We also provide a couple of rigidity implications of Theorem~\ref{mainthm2} and provide a constraint on the $\mathbb{Q}$-rational singular fibers of the deformation family at the heart of the proof of Theorem~\ref{mainthm2}. We end by our discussion on intermediate arithmetic Casas-Alvero schemes in Section~\ref{sec6}, where we prove Theorem~\ref{mainthm4}.

\subsection*{Acknowledgement}
The author would like to thank Mark Spivakovksy and Daniel Schaub for pointing out a gap in the proof of Theorem~\ref{mainthm2} in a previous version of the paper, and for several enlightening discussions in the process of fixing it. He would also like to thank Max Lieblich, Farbod Shokrieh, Sándor Kovács, Utsav Choudhury, and Apoorva Khare for useful discussion and feedback at various stages. The author was partially supported by NSF CAREER DMS-2044564 and NSF FRG DMS-2151718 grants.

\section{Global notations and Definitions}\label{secnot}
We list a few notations and definitions that will be used throughout the paper.
\begin{enumerate}[(i)]
    \item For a univariate polynomial $f(x)=a_{n}x^{n}+a_{n-1}x^{n-1}+\dots+a_0\in \mathbb{K}[x]$ over any field $\mathbb{K}$, we will denote the $i^{th}$ Hasse--Schmidt derivative (introduced in \cite{HS}) of $f(x)$ by $f_i(x)$, which is defined as \[f_i(x)={n\choose i}a_nx^{n-i}+{n-1\choose i}a_{n-1}x^{n-i-1}+\dots+{i\choose i}a_i.\]
    \item We will denote the $i^{th}$ multivariate Hasse--Schmidt derivation on $\mathbb{K}[x_1,\dots, x_k]$ by $HD^i_k$, which is defined as follows: for a monomial $x_1^{\alpha_1}\dots x_k^{\alpha_k}\in \mathbb{K}[x_1,\dots,x_k]$ define
    \begin{equation}\label{EqnHS1}
    HD_k^ix_1^{\alpha_1}\dots x_k^{\alpha_k}:=\sum_{j_1+\dots+j_k=i}{\alpha_1\choose j_1}{\alpha_2\choose j_2}\dots {\alpha_k\choose j_k}x_1^{\alpha_1-j_1}\dots x_k^{\alpha_k-j_k}.
    \end{equation}
    The derivation $HD^i_k:\mathbb{K}[x_1,\dots,x_k]\rightarrow \mathbb{K}[x_1,\dots, x_k]$ is defined by extending \eqref{EqnHS1} $\mathbb{K}$-linearly. 
    \item For any two univariate polynomials $f(x), g(x)\in \mathbb{K}[x]$, we will denote their classical resultant (see \cite{GKZ}*{Chapter 12}) by $\operatorname{Res}(f, g)$.
\end{enumerate}

\renewcommand*{\thetheorem}{\arabic{section}.\arabic{theorem}}
\renewcommand*{\thecorollary}{\arabic{section}.\arabic{theorem}}

\section{D-subschemes and monomial affine $\mathbb{K}$-schemes}\label{sec2}
\subsection{Preliminaries on D-subschemes}\label{subsec2.1} Let $\mathbb{K}$ be an algebraically closed field of characteristic $0$ unless otherwise mentioned and  $\algk$ be the category of finitely generated commutative unital $\mathbb{K}$-algebras. For a given polynomial ring $\mathbb{K}[x_1, \dots, x_k]$ over $\mathbb{K}$, let $D_k:\mathbb{K}[x_1, \dots, x_k]\rightarrow \mathbb{K}[x_1, \dots, x_k]$ be the $\mathbb{K}$-linear derivation $D_k=\sum_{i=1}^{k}\partial/\partial x_i$. Furthermore, for any integer $i\geq 1$, let \[D^{i}_k:=(D_k\circ D_k\circ\cdots\circ D_k):\mathbb{K}[x_1, \dots, x_k]\rightarrow \mathbb{K}[x_1, \dots, x_k]\] be the map obtained by composing $D_k$ with itself $i$ times.

\begin{definition}[D-ideal and D-subscheme]\label{Def1}
    Let $(X, \mathbb{A}^k_{\mathbb{K}})$ be a pair of an affine $\mathbb{K}$-scheme $X=\spec(A)$ and an affine space $\mathbb{A}^k_{\mathbb{K}}$ into which $X$ embeds as a closed subscheme. Any embedding $X\hookrightarrow \mathbb{A}^k_{\mathbb{K}}$ induces a surjective $\mathbb{K}$-algebra homomorphism $\mathbb{K}[x_1, \dots, x_k]\rightarrow A$, whereby $A=\mathbb{K}[x_1, \dots, x_k]/I$ for some ideal $I\subseteq \mathbb{K}[x_1, \dots, x_k]$.
    \begin{enumerate}[(i)]
        \item Define the \textit{D-ideal} of $I$ to be the ideal $D_k(I):=(\{D_k(f)\mid \ f\in I\})\subseteq \mathbb{K}[X_1, \dots, X_k]$ generated by the image of $I$ under the derivation $D_k$. 
        \item Define the \textit{D-subscheme} of $(X, \mathbb{A}^k_{\mathbb{K}})$ as the pair $(\mathscr{D}X, \mathbb{A}^k_{\mathbb{K}})$, where $\mathscr{D}X$ is the affine $\mathbb{K}$-scheme $\spec(\mathbb{K}[x_1, \dots, x_k]/D_k(I))$.
    \end{enumerate}
\end{definition}

\begin{remark}\label{Rem0}
\begin{enumerate}[(i)]
    \item[] 
    \item Note that for every ideal $I$ that $I\subseteq D_k(I)$, because if $f\in I$, then $f=D_k(x_1f)-x_1D_k(f)\in D_k(I)$. It follows that $\mathscr{D}X$ is indeed a closed subscheme of $X$.
    \item If an ideal $I\subseteq \mathbb{K}[X_1, \dots, X_k]$ is generated by $f_1, f_2, \dots, f_m$, then we claim that $D_k(I)=(f_1, \dots, f_m, D_k(f_1),\dots, D_k(f_m))$. Indeed, given any element $g=h_1f_1+h_2f_2+\cdots +h_mf_m\in I$, we have $D_k(g)=\sum_{i=1}^{m}(D_k(h_i)f_i+h_iD_k(f_i))\in (f_1, \dots, f_m, D_k(f_1),\dots, D_k(f_m))$ by the Leibniz rule for derivations. The reverse inclusion follows by definition of $D_k(I)$ and since $I\subseteq D_k(I)$.
\end{enumerate}
\end{remark} 

\subsubsection{Linear reduction of affine $\mathbb{K}$-schemes}\label{subsubsec2.1.1} Let $X=\spec(A)$ be an affine $\mathbb{K}$-scheme, with $A=\mathbb{K}[x_1, \dots, x_k]/I$ where $I=(f_1, \dots, f_m)\subseteq \mathbb{K}[x_1, \dots, x_k]$. Assume the generator $f_1(x_1, \dots, x_k)=a_0+a_1x_1+a_2x_2+\cdots+a_kx_k$ is a linear polynomial, in which case $D_k(f_1)=a_1+a_2+\cdots+a_k$ is a scalar. We call such $f_1$ to be \textit{D-degenerate}, and classify them into the following two categories:
\begin{enumerate}[(i)]
\item \textit{Tame D-degeneracy}: This corresponds to the case $D_k(f_1)=a_1+\cdots+a_k=0$. Letting $I'=(f_2, f_3,\dots, f_m)\subseteq \mathbb{K}[X_1,\dots, X_k]$, we have $D_k(I)=(f_1, D_k(I'))$. Thus, $\mathscr{D}X$ is the scheme-theoretic intersection of the hypersurface $\spec(\mathbb{K}[x_1, \dots, x_k]/(f_1))$ and the D-subscheme $\mathscr{D}X'$ of the affine $\mathbb{K}$-scheme $X'=\spec(\mathbb{K}[x_1, \dots, x_k]/I')$ in $\mathbb{A}^k_{\mathbb{K}}$.
\item \textit{Wild D-degeneracy}: This corresponds to the generic case when $D_k(f_1)=a_1+\cdots+a_k\neq0$, in which case $D_k(I)=\mathbb{K}[x_1,\dots, x_k]$ and thus, $\mathscr{D}X$ collapses to the empty subscheme. 
\end{enumerate} 
Thus, generically, linear polynomials in the defining ideal of an affine $\mathbb{K}$-scheme $X$ collapse the D-subscheme $\mathscr{D}X$ (wild case), or are redundant (tame case). This situation can be rectified by the process of \textit{linear reduction} of $X$, which we now describe:

Since $f_1=a_0+a_1x_1+\cdots+a_kx_k$ is non-constant, assume, without loss of generality, that $a_1\neq 0$ and consider the $\mathbb{K}$-algebra homomorphism $\pi_{f_1}:\mathbb{K}[x_1, \dots, x_k]\rightarrow \mathbb{K}[x_2, \dots, x_k]$ given by 
\[x_1\mapsto -(a_0+a_2x_2+a_3x_3+\cdots+a_kx_k)/a_1 \quad \text{and } x_i\mapsto x_i \text{ for all } i\geq 2.\]
Geometrically, this map induces the inclusion of $\mathbb{A}^{k-1}_{\mathbb{K}}$ into $\mathbb{A}^k_{\mathbb{K}}$ by identifying it with the hyperplane $V(f_1)\subseteq \mathbb{A}^k_{\mathbb{K}}$. Furthermore, $\ker\pi_{f_1}=(f_1)$ and thus, we obtain the isomorphism of $\mathbb{K}$-algebras \[A=\mathbb{K}[x_1, \dots, x_k]/(f_1, \dots, f_k)\cong \mathbb{K}[x_2, \dots, x_k]/(\pi_{f_1}(f_2), \pi_{f_1}(f_3), \dots, \pi_{f_1}(f_k))=:A_1.\]
It follows that, although $X_1:=\spec(A_1)$ is isomorphic to $X:=\spec(A)$ as affine $\mathbb{K}$-schemes, $X_1$ comes with a natural embedding into $\mathbb{A}^{k-1}_{\mathbb{K}}$. We define the linear reduction of $(X, \mathbb{A}^k_{\mathbb{K}})$ with respect to $f_1\in I$ as the pair $(X_1, \mathbb{A}^{k-1}_{\mathbb{K}})$. Note that we can define the linear reduction of $X=\spec(\mathbb{K}[x_1, \dots, x_k]/I)$ with respect to any linear polynomial $f_1\in I$, by extending $\{f_1\}$ to a generating set for $I$. This process is well-defined, i.e., independent of the choice of generators of $I$, since the linear reduction of $X$ with respect to $f_1\in I$ is the affine scheme $\spec(\mathbb{K}[x_2, \dots, x_k]/\pi_{f_1}(I))$ with its embedding in $\mathbb{A}^{k-1}_{\mathbb{K}}$. 

The process of linear reduction can be iterated until one obtains an affine $\mathbb{K}$-scheme $X_{\lred}=\spec(\mathbb{K}[x_1, \dots, x_q]/I_{\lred})$ isomorphic to $X$, such that $I_{\lred}$ does not contain any non-constant linear polynomials. This process terminates after finitely many steps since after each linear reduction we obtain an affine scheme isomorphic to $X$ embedding into an affine space of one lower dimension. The following definitions formalize this sequence of linear reductions algebraically.

\begin{definition}[Linear sequences]\label{Def2}
\begin{enumerate}[(i)]
    \item A sequence of polynomials $f_1, \dots, f_r$ in $\mathbb{K}[x_1,\dots, x_k]$ is said to form a linear sequence if $f_1$ is a non-constant linear polynomial in $\mathbb{K}[x_1,\dots, x_k]$ and for each $2\leq i\leq r$, the residue of $f_i$ in $\mathbb{K}[x_1,\dots, x_k]/(f_1, \dots, f_{i-1})$ is a non-constant linear polynomial. 
    \item For an ideal $I\subseteq \mathbb{K}[x_1,\dots,x_k]$, a sequence of polynomials $f_1, f_2, \dots, f_r\in \mathbb{K}[x_1,\dots, x_k]$ is said to form an $I$-linear sequence if the $f_i$ form a linear sequence and each $f_i$ belongs in $I$.
\end{enumerate}  
\end{definition}

Although, for a linear sequence $f_1, \dots, f_r \in \mathbb{K}[x_1,\dots, x_k]$, we do not require $f_i$ to be linear for $2\leq i\leq r$, one can assume them to be linear without loss of generality,  by considering the residue of $f_i$ modulo $(f_1, \dots, f_{i-1})$ in $\mathbb{K}[x_1,\dots, x_k]$. Furthermore, it is clear that any non-constant linear polynomial in $\mathbb{K}[x_1,\dots, x_k]$ is a linear sequence of length $1$.

\begin{remark}\label{Rem1}
    \begin{enumerate}[(i)]
    \item[]
        \item For any linear sequence $f_1, \dots, f_r$ in $\mathbb{K}[x_1,\dots, x_k]$, one contains the $\mathbb{K}$-algebra isomorphism $\mathbb{K}[x_1,\dots, x_k]/(f_1,\dots, f_r)\cong \mathbb{K}[y_1, y_2, \dots, y_{k-r}]$ by inducting on $r$. In particular, the quotient of $\mathbb{K}[x_1,\dots, x_k]$ by an ideal generated by a linear sequence is a global complete intersection. Furthermore, we also note that the length $r$ of any linear sequence $f_1, \dots, f_r$ in $\mathbb{K}[x_1,\dots, x_k]$ is at most $k$.
        \item For an embedded affine scheme $(X, \mathbb{A}^k_{\mathbb{K}})$ defined by $X=\spec(\mathbb{K}[x_1,\dots, x_k]/I)$, the process of $r$ iterations of linear reduction is equivalent to taking an $I$-linear sequence $f_1, \dots, f_r\in \mathbb{K}[x_1,\dots, x_k]$ and embedding $X$ in the closed subscheme $\spec(\mathbb{K}[x_1,\dots, x_k]/(f_1, \dots, f_r))\subseteq \mathbb{A}^n_{\mathbb{K}}$ as the closed subscheme $\displaystyle\spec\frac{\mathbb{K}[x_1,\dots, x_k]/(f_1,\dots, f_r)}{I/(f_1,\dots, f_r))}$.
        \item By the embedding of $X$ in $\spec(\mathbb{K}[x_1,\dots, x_k]/(f_1, \dots, f_r))$ from point $(2)$ above, we must have $\dim X\leq \dim \mathbb{K}[x_1,\dots, x_k]/(f_1,\dots, f_r)= k-r$, where the last equality follows from point $(1)$ above. In particular, we note that for any proper ideal $I\subseteq \mathbb{K}[x_1,\dots, x_k]$, the length $r$ of any $I$-linear sequence is at most $\cdim_{\mathbb{K}[x_1,\dots, x_k]}(I)$.  
    \end{enumerate}
\end{remark}

We noted earlier that the process of linear reduction of an embedded affine scheme $(X, \mathbb{A}^k_{\mathbb{K}})$ terminates after finitely many steps (in fact, after at most $\cdim(X)$ steps by Remark~\ref{Rem1}(iii). The terminal linearly reduced affine scheme obtained from $X=\spec(\mathbb{K}[x_1,\dots, x_k]/I)$, corresponds to a maximal $I$-linear sequence of $\mathbb{K}[x_1,\dots, x_k]$ by Remark~\ref{Rem1}(ii). \textit{A priori}, this process depends on the choice of the maximal $I$-linear sequence polynomial. The following proposition shows that the terminal scheme obtained from $X$ is well-defined.  

\begin{proposition}\label{Prop1}
    Let $I\subsetneq \mathbb{K}[x_1,\dots, x_k]$ be a proper ideal, and $I_{\lin}\subseteq I$ be the sub-ideal generated by all the linear polynomials in $I$. If $f_1, \dots, f_r$ in $\mathbb{K}[x_1,\dots, x_k]$ is any maximal $I$-linear sequence, then $I_{\lin}=(f_1, \dots, f_r)$ as ideals of $\mathbb{K}[x_1,\dots, x_k]$.
\end{proposition}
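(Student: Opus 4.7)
The plan is to prove the nontrivial inclusion $I_{\lin} \subseteq (f_1,\dots,f_r)$ by showing that any linear polynomial $g \in I$ must reduce to $0$ modulo $(f_1,\dots,f_r)$; otherwise its residue either extends the linear sequence (contradicting maximality) or produces a unit in $I$ (contradicting properness). The reverse inclusion $(f_1,\dots,f_r) \subseteq I_{\lin}$ should be dispatched first as a normalization step.

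First, I would invoke the observation following Definition~\ref{Def2}: by replacing each $f_i$ (for $i \geq 2$) with its residue modulo $(f_1,\dots,f_{i-1})$, we may assume without loss of generality that every $f_i$ is literally a non-constant linear polynomial in $\mathbb{K}[x_1,\dots,x_k]$. This replacement preserves both the ideal $(f_1,\dots,f_i)$ at every stage and the containment $f_i \in I$ (the residue differs from the original $f_i$ by an element of $(f_1,\dots,f_{i-1}) \subseteq I$), so the sequence remains $I$-linear and maximal. Then $(f_1,\dots,f_r) \subseteq I_{\lin}$ is immediate from the definition of $I_{\lin}$.

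For the other direction, let $g \in I$ be any linear polynomial; I want to show $g \in (f_1,\dots,f_r)$. By Remark~\ref{Rem1}(i), iterated linear reduction yields an isomorphism $\varphi: \mathbb{K}[x_1,\dots,x_k]/(f_1,\dots,f_r) \xrightarrow{\sim} \mathbb{K}[y_1,\dots,y_{k-r}]$. The explicit construction of each reduction map $\pi_{f_i}$ in Section~\ref{subsubsec2.1.1} substitutes a linear expression for one variable and fixes the others, so the composition $\varphi$ sends the residue of any linear polynomial in the $x_j$ to a linear polynomial in the $y_j$. Consequently, $\varphi(\bar{g}) \in \mathbb{K}[y_1,\dots,y_{k-r}]$ is a linear polynomial, and three cases arise.

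If $\varphi(\bar{g})$ is a non-constant linear polynomial, then appending $g$ to $f_1,\dots,f_r$ produces a linear sequence of length $r+1$ whose members all lie in $I$, contradicting maximality. If $\varphi(\bar{g})$ is a nonzero constant $c$, then $g - c' \in (f_1,\dots,f_r) \subseteq I$ for the corresponding nonzero scalar $c' \in \mathbb{K}$ (the preimage of $c$ under $\varphi$), forcing $c' \in I$ and hence $I = \mathbb{K}[x_1,\dots,x_k]$, contradicting properness of $I$. Therefore $\varphi(\bar{g}) = 0$, so $g \in (f_1,\dots,f_r)$, which finishes the proof. I expect the only mild obstacle to be justifying carefully that linear reduction preserves linearity at each step (so that the identification of $\varphi(\bar{g})$ as linear in $y_1,\dots,y_{k-r}$ is legitimate); this is a direct inspection of the substitution $x_1 \mapsto -(a_0 + a_2 x_2 + \dots + a_k x_k)/a_1$, which is affine linear in the remaining variables.
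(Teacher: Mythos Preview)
Your proof is correct and follows essentially the same approach as the paper: normalize so that each $f_i$ is linear, then perform a three-case analysis on the residue of an arbitrary linear $g\in I$ modulo $(f_1,\dots,f_r)$, using maximality and properness to rule out the non-constant and nonzero-constant cases. The only cosmetic difference is that you route the ``residue is linear'' claim through the explicit isomorphism $\varphi$ of Remark~\ref{Rem1}(i), whereas the paper asserts it directly.
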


\begin{proof}
    Without loss of generality, we may assume that $f_1, \dots, f_r$ are all linear polynomials in $I\subseteq \mathbb{K}[x_1,\dots, x_k]$. Clearly, we have $(f_1, \dots, f_r)\subseteq I_{\lin}$. Suppose the reverse inclusion fails to hold, i.e., there exists a linear polynomial $g\in I_{\lin}\setminus (f_1, \dots, f_r)$, whereby its residue modulo $(f_1, \dots, f_r)$ is also linear. If the residue is non-constant, then $f_1, f_2, \dots, f_r, g$ would be an $I$-linear sequence, contradicting the maximality of $f_1, f_2, \dots, f_r$. If the residue of $g$ modulo $(f_1, \dots, f_r)$ is a non-zero constant scalar, then since $(f_1,\dots, f_r)\subseteq I_{\lin}$, we would contradict the properness of $I$. Thus, the residue of $g$ modulo $(f_1,\dots, f_r)$ must be $0$, or equivalently, $g\in (f_1,\dots, f_r)$. This proves the reverse inclusion.   
\end{proof}

By Remark~\ref{Rem1}(ii) and Proposition~\ref{Prop1}, the terminal affine scheme obtained by a maximal sequence of iterations of linear reductions of $X=\spec(\mathbb{K}[x_1,\dots, x_k]/I)$ corresponding to any maximal $I$-linear sequence in $\mathbb{K}[x_1,\dots, x_k]$ is uniquely defined. We have the following two definitions.

\begin{definition}[Complete linear reduction]\label{Def3}
    Given an affine $\mathbb{K}$-scheme $X=\spec(A)$, with $A=\mathbb{K}[x_1, \dots, x_k]/I$, we define the \textit{complete linear reduction} of $X$ to be the isomorphic affine $\mathbb{K}$-scheme $X_{\lred}:=\spec(A_{\lred})$, where $A_{\lred}:=\mathbb{K}[x_1, \dots, x_q]/I_{\lred}$ is the $\mathbb{K}$-algebra obtained from $A$ by applying any maximal sequence of linear reductions. 
\end{definition}

\begin{definition}[Linearly essential D-subscheme]\label{Def4}
     Let $X=\spec(A)$ be an affine $\mathbb{K}$-scheme with $A=\mathbb{K}[x_1, \dots, x_k]/I$ for some ideal $I\subseteq \mathbb{K}[x_1, \dots, x_k]$ with complete linear reduction $X_{\lred}:=\spec(A_{\lred})$, where $A_{\lred}:=\mathbb{K}[x_1, \dots, x_q]/I_{\lred}$. Define the \textit{linearly essential D-subscheme} $\mathscr{D}_{\leff}X$ of $X$ to be the closed affine subscheme $\mathscr{D}X_{\lred}:=\spec(\mathbb{K}[x_1, \dots, x_q]/D_q(I_{\lred}))$.
\end{definition}

Since the linearly essential D-subscheme $\mathscr{D}_{\leff}X$ of an affine $\mathbb{K}$-scheme $X$ is also affine, we define higher order linearly essential D-subschemes of $X$ by defining the analogous subschemes of $\mathscr{D}_{\leff}X$.

\begin{definition}[Linearly essential $i^{th}$ D-subscheme]\label{Def5} Let $X=\spec(A)$ be an affine $\mathbb{K}$-scheme with $A=\mathbb{K}[x_1, \dots, x_k]/I$. We define the linearly essential $i^{th}$ D-subscheme of $X$ to be the affine $\mathbb{K}$-scheme $\mathscr{D}^i_{\leff}X:=\mathscr{D}_{\leff}(\mathscr{D}_{\leff}(\cdots (\mathscr{D}_{\leff}X)\cdots))$ obtained by iterating the construction of linearly essential D-subscheme $i$ times for $i>0$. Also define $\mathscr{D}_{\leff}^0X:=X$.     
\end{definition}

\begin{remark}\label{Rem2}
Let $X=\spec(A)$ be an affine $\mathbb{K}$-scheme for some $\mathbb{K}$-algebra $A=\mathbb{K}[x_1, \dots, x_k]/I$, where $I$ is a homogeneous ideal generated by homogeneous polynomials $f_1, \dots, f_m$ of degree $n>1$. Then $I$ does not contain any linear polynomials, so $X=X_{\lred}$ is completely linearly reduced. Note that the derivation $D_k$ sends monomials in $\mathbb{K}[x_1, \dots, x_k]$ of degree $n$ to homogeneous polynomials of degree $n-1$. Hence for each $j$, the homogeneous ideals $D^j_k(I):=D_k(D^{j-1}_k(I))$ are generated by the homogeneous polynomials $D^i_k(f_l)$ ($0\leq i\leq j$ and $1\leq l\leq m$) of degree at least $n-j$. Consequently, for all $1\leq j\leq n-2$, $D^j_k(I)$ do not contain linear polynomials implying that the affine $\mathbb{K}$-subschemes $\mathscr{D}_{\leff}^jX=\mathscr{D}^jX\subset X$ are completely linearly reduced. Thus, the linearly essential $j^{th}$ D-subscheme of $X$ is
\[\mathscr{D}^j_{\leff}X=\mathscr{D}^jX:=\spec(\mathbb{K}[x_1, \dots, x_k]/D^j_k(I)) , \ \forall 1\leq j\leq n-1.\]
\end{remark}

\subsubsection{D-subschemes of affine schemes over positive characteristic}\label{subsubsec2.1.2}
The $\mathbb{K}$-linear derivation $D_k:\mathbb{K}[x_1,\dots, x_k]\rightarrow \mathbb{K}[x_1,\dots, x_k]$ and its higher compositions can be packaged together into a \textit{Hasse--Schmidt} derivation $\exp(tD_k):\mathbb{K}[x_1,\dots, x_k]\rightarrow \mathbb{K}[x_1,\dots, x_k][\![t]\!]$ given by $\exp(D_k):=\sum_{i\geq 0}(D_k^i/i!)t^i$. In fact, the $i^{th}$ multivariate Hasse--Schmidt derivation obtained from $D_k$ (i.e., the coefficient of $t^i$ in $\exp(tD_k)$) can be defined alternatively by \eqref{EqnHS1}.

When $\mathbb{K}$ is an algebraically closed field of characteristic $p$, we will use the $i^{th}$ derivation on $\mathbb{K}[x_1,\dots, x_k]$ defined by \eqref{EqnHS1}. For an affine scheme $X=\spec(\mathbb{K}[x_1,\dots, x_k]/I)$ over a positive characteristic field, one can naively define the $j^{th}$ Hasse--Schmidt D-subscheme of $X$ as the subscheme $\mathcal{HD}^jX:=\spec(\mathbb{K}[x_1,\dots, x_k]/(HD_k(I), HD^2_k(I), \dots, HD^j_k(I)))$. For ``nice" affine schemes $X=\spec(\mathbb{K}[x_1,\dots, x_k]/I)$, i.e., those for which the ideal $(HD_k(I), HD^2_k(I),\dots, HD^j_k(I))\subseteq \mathbb{K}[x_1,\dots, x_k]$ is a proper ideal, the naive definition of $\mathcal{HD}^j_kX$ provides the desired construction. In general, one can construct reductions analogous to Definition~\ref{Def3} to tackle degeneracies.

In this paper, we will be concerned with (higher) $\delta$-subschemes (primarily $\delta=D_k$ and $HD_k$) of principal monomial schemes (over algebraically closed fields $\mathbb{K}$) of degree $n\geq2$, and certain deformations of these over $\mathbb{A}^1_{\mathbb{K}}$. A \textit{principal monomial scheme} is defined to be a monomial scheme determined by a single monomial of total degree $n\geq2$ in the ring $\mathbb{K}[x_1, \dots, x_k]$. In particular, over characteristic $p$, the naive definition of $\mathcal{HD}^j_kX$ works for such $X$ for all $1\leq j<$ degree of the defining monomial.  

\subsection{D-subschemes of prinicpal monomial affine $\mathbb{K}$-schemes}\label{subsec2.2}
Let $\mathbb{K}$ be an algebraically closed field of characteristic $0$. For an integer $n\geq 2$, let $\mathbf{r}:=(r_1,\dots, r_k)$ corresponding to a fixed ordered partition $r_1+r_2+\cdots+r_k=n$ into $k\geq 1$ positive integers. Let $\mathbf{x}^{\mathbf{r}}\in \mathbb{K}[x_1, \dots, x_k]$ be the monomial $\mathbf{x}^{\mathbf{r}}=\prod_{i=1}^{k}x_i^{r_i}$. Define $\mathscr{S}_n(\mathbf{r})$ to be the affine monomial $\mathbb{K}$-scheme $\spec(\mathbb{K}[x_1, \dots, x_k]/I(\mathbf{r}))$, where $I(\mathbf{r})$ is the ideal generated by $\mathbf{x}^{\mathbf{r}}$ in $\mathbb{K}[x_1,\dots, x_k]$. We will identify the set of $\mathbb{K}$-rational points $\mathscr{S}_n(\mathbf{r})(\mathbb{K})$ with the corresponding affine algebraic subset $V(I(\mathbf{r}))=V(\mathbf{x}^{\mathbf{r}})\subseteq \mathbb{A}^k_{\mathbb{K}}$.

\begin{remark}\label{Remlem}
    The set of $\mathbb{K}$-rational points $\mathscr{S}_n(\mathbf{r})(\mathbb{K})$ is reducible, with irreducible components $V(x_i)$ for each $1\leq i\leq k$. With the canonical $\mathbb{K}$-affine scheme structure induced by $\mathscr{S}_n(\mathbf{r})$, each irreducible component $V(x_i)$ occurs with multiplicity $r_i$.
\end{remark}

Since $\mathbf{x}^{\mathbf{r}}\in \mathbb{K}[x_1, \dots, x_k]$ is a monomial of degree at least $2$, by Remark~\ref{Rem2}, we know that $\mathscr{D}^j\mathscr{S}_n(\mathbf{r}):=\spec(\mathbb{K}[x_1,\dots, x_k]/D^j_kI(\mathbf{r}))=\spec(\mathbb{K}[x_1, \dots, x_k]/(\mathbf{x}^{\mathbf{r}}, D_k\mathbf{x}^{\mathbf{r}}, \dots, D_k^j\mathbf{x}^{\mathbf{r}}))$ is completely linearly reduced for all $0\leq j\leq n-2$. Thus, the linearly essential $j^{th}$ D-subschemes of $\mathscr{S}_n(\mathbf{r})$ are: \[\mathscr{D}^j\mathscr{S}_n(\mathbf{r})=\spec(\mathbb{K}[x_1, \dots, x_k]/(\mathbf{x}^{\mathbf{r}}, D_k\mathbf{x}^{\mathbf{r}}, \dots, D_k^j\mathbf{x}^{\mathbf{r}})),\ \forall \ 1\leq j\leq n-1.\]
Henceforth we will refer to these closed subschemes of $\mathscr{S}_n(\mathbf{r})$ as the (higher) D-subschemes of $\mathscr{S}$. 

The next result characterizes the set of $\mathbb{K}$-rational points $\mathscr{D}^j\mathscr{S}_n(\mathbf{r})(\mathbb{K})$ for all $0\leq j\leq n-1$ in terms of the irreducible components $V(x_i)$ of $\mathscr{S}_n(\mathbf{r})(\mathbb{K})$. The set of $\mathbb{K}$-rational points $\mathscr{D}^j\mathscr{S}_n(\mathbf{r})(\mathbb{K})$ is equal to $V(\mathbf{x}^{\mathbf{r}}, D_k\mathbf{x}^{\mathbf{r}}, \dots, D_k^j\mathbf{x}^{\mathbf{r}})=\bigcap_{i=1}^{j}V(\mathbf{x}^{\mathbf{r}}, D_k^i\mathbf{x}^{\mathbf{r}})\subseteq \mathbb{A}^k_{\mathbb{K}}(\mathbb{K})$. The following proposition gives the set-theoretic description of $\mathscr{D}^j\mathscr{S}_n(\mathbf{r})(\mathbb{K})$.

\begin{proposition}\label{Prop2}
    For all $1\leq j\leq n$, the set $\mathscr{D}^{j-1}\mathscr{S}_n(\mathbf{r})(\mathbb{K})$ of $\mathbb{K}$-rational points of the $(j-1)^{th}$ D-subscheme of $\mathscr{S}_n(\mathbf{r})$ is the affine algebraic subset of $\mathbb{A}^k_{\mathbb{K}}(\mathbb{K})$ given by
    \begin{equation}\label{Eqn1}
        \mathscr{D}^{j-1}\mathscr{S}_n(\mathbf{r})(\mathbb{K})=\bigcup_{l=1}^{j}\bigcup_{\substack{i_1<i_2<\cdots<i_l \\ r_{i_1}+r_{i_2}+\cdots+r_{i_l}\geq j}}V(x_{i_1})\cap V(x_{i_2})\cap\cdots\cap V(x_{i_l}).
    \end{equation}
\end{proposition}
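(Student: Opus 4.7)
The plan is to characterize membership in $\mathscr{D}^{j-1}\mathscr{S}_n(\mathbf{r})(\mathbb{K})$ purely in terms of the vanishing locus of a point, and then to verify that this coincides with the description \eqref{Eqn1}. For a candidate point $p=(a_1,\dots,a_k)\in\mathbb{A}^k_{\mathbb{K}}(\mathbb{K})$, let $S(p):=\{l : a_l=0\}$ and set $i_*(p):=\sum_{l\in S(p)} r_l$. I claim that $p\in\mathscr{D}^{j-1}\mathscr{S}_n(\mathbf{r})(\mathbb{K})$ if and only if $i_*(p)\geq j$.

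To prove the claim I would expand $D_k^i=\bigl(\sum_{l=1}^{k}\partial/\partial x_l\bigr)^i$ by the multinomial theorem and apply it termwise to $\mathbf{x}^{\mathbf{r}}$, obtaining
\[
D_k^{i}\mathbf{x}^{\mathbf{r}}=\sum_{\substack{j_1+\cdots+j_k=i\\ j_l\le r_l}}\binom{i}{j_1,\dots,j_k}\prod_{l=1}^{k}\frac{r_l!}{(r_l-j_l)!}\,x_l^{r_l-j_l}.
\]
Evaluation at $p$ kills any monomial $\prod_l x_l^{r_l-j_l}$ for which some $l\in S(p)$ has $j_l<r_l$, so only multi-indices with $j_l=r_l$ for every $l\in S(p)$ contribute. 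Since $\sum j_l=i$, such a multi-index exists only when $i\geq i_*(p)$; in particular $D_k^{i}\mathbf{x}^{\mathbf{r}}(p)=0$ for all $i<i_*(p)$. At $i=i_*(p)$ the constraint forces $j_l=0$ for $l\notin S(p)$, leaving a single surviving term equal to $\binom{i_*(p)}{\mathbf{r}_{S(p)}}\prod_{l\in S(p)}r_l!\cdot\prod_{l\notin S(p)}a_l^{r_l}$, which is nonzero in characteristic $0$ as each $a_l$ ($l\notin S(p)$) is nonzero and every factorial and multinomial coefficient is a unit. Hence the full system $D_k^{0}\mathbf{x}^{\mathbf{r}}(p)=\cdots=D_k^{j-1}\mathbf{x}^{\mathbf{r}}(p)=0$ holds precisely when $j-1<i_*(p)$, i.e., $i_*(p)\geq j$.

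Finally, I would translate this weight condition into the union in \eqref{Eqn1}. If $i_*(p)\geq j$, I pick elements of $S(p)$ greedily until the running sum of the corresponding $r_l$ first reaches $\geq j$; since each $r_l\geq 1$, this terminates after at most $j$ steps, producing indices $i_1<\cdots<i_l$ with $l\leq j$, $\sum_s r_{i_s}\geq j$, and $p\in\bigcap_s V(x_{i_s})$. Conversely, any tuple $(i_1,\dots,i_l)$ appearing in \eqref{Eqn1} with $p\in\bigcap_s V(x_{i_s})$ is contained in $S(p)$, so $i_*(p)\geq\sum_s r_{i_s}\geq j$. This matches the two sides and completes the proof. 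I do not foresee a genuine obstacle: the only subtle point is verifying that the one surviving term at $i=i_*(p)$ does not vanish, which is immediate from the characteristic-zero hypothesis on $\mathbb{K}$.
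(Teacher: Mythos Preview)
Your argument is correct and, at its computational core, identical to the paper's: both expand $D_k^{i}\mathbf{x}^{\mathbf{r}}$ as a sum over multi-indices $(j_1,\dots,j_k)$ and observe that, at a point $p$, only those monomials with $j_l=r_l$ for every vanishing coordinate $l$ survive. The organizational difference is that the paper proceeds by induction on $j$, stratifying $\mathscr{D}^{m-1}\mathscr{S}_n(\mathbf{r})(\mathbb{K})$ by the exact weight $\mu=\sum r_{i_s}$ and checking one further derivative at each step, whereas you bypass the induction entirely by isolating the invariant $i_*(p)=\sum_{l\in S(p)}r_l$ and showing in one stroke that it is the exact index at which $D_k^{i}\mathbf{x}^{\mathbf{r}}(p)$ first becomes nonzero. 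Your approach is more direct and makes the underlying mechanism transparent (it is essentially computing the order of vanishing of $t\mapsto \mathbf{x}^{\mathbf{r}}(p+t(1,\dots,1))$ at $t=0$); the paper's induction, on the other hand, makes explicit how the combinatorial description \eqref{Eqn1} evolves as $j$ increases. Your final greedy selection to land in a component with at most $j$ indices is a clean touch that the inductive proof does not need but which neatly closes the argument.
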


\begin{proof}
     We induct on $j$, with Remark~\ref{Remlem} corresponding to the base case $j=1$. Let us assume that the statement holds for some $j=m<n$. Since $$\mathscr{D}^{j-1}\mathscr{S}_n(\mathbf{r})(\mathbb{K})=V(\mathbf{x}^{\mathbf{r}}, D_k\mathbf{x}^{\mathbf{r}}, \dots, D_k^{j-1}\mathbf{x}^{\mathbf{r}})\subseteq \mathbb{A}^k_{\mathbb{K}}(\mathbb{K}) \quad \forall j\leq n,$$ by the induction hypothesis we have $$V(\mathbf{x}^{\mathbf{r}}, D_k\mathbf{x}^{\mathbf{r}}, \dots, D_k^{m-1}\mathbf{x}^{\mathbf{r}})=\bigcup_{l=1}^{m}\bigcup_{\substack{i_1<i_2<\cdots<i_l \\ r_{i_1}+r_{i_2}+\cdots+r_{i_l}\geq m}}V(x_{i_1})\cap V(x_{i_2})\cap\cdots\cap V(x_{i_l}).$$
    The above union can be stratified along the levels $r_{i_1}+r_{i_2}+\cdots + r_{i_l}= \mu$, where $\mu\geq m$. That is $V(\mathbf{x}^{\mathbf{r}}, D_k\mathbf{x}^{\mathbf{r}}, \dots, D_k^{m-1}\mathbf{x}^{\mathbf{r}})=\bigcup_{\mu=m}^{n}\mathscr{D}^{j-1}\mathscr{S}_n(\mathbf{r})[\mu](\mathbb{K})$, where $$\mathscr{D}^{j-1}\mathscr{S}_n(\mathbf{r})[\mu](\mathbb{K}):=\bigcup_{l=1}^{m}\bigcup_{\substack{i_1<i_2<\cdots<i_l \\ r_{i_1}+r_{i_2}+\cdots+r_{i_l}=\mu}}V(x_{i_1})\cap V(x_{i_2})\cap\cdots\cap V(x_{i_l}). $$
    By definition, any $\mathbb{K}$-rational point $P\in \mathscr{D}^{m-1}\mathscr{S}_n(\mathbf{r})(\mathbb{K})$ satisfies $\mathbf{x}^{\mathbf{r}}(P)=D_k\mathbf{x}^{\mathbf{r}}(P)=\cdots= D_k^{m-1}\mathbf{x}^{\mathbf{r}}(P)=0$, and thus $P\in \mathscr{D}^{m}\mathscr{S}_n(\mathbf{r})(\mathbb{K})$ if and only if $D_k^m\mathbf{x}^{\mathbf{r}}(P)=0$ as well. Recall that $\mathbf{x}^{\mathbf{r}}=\prod_{i=1}^{k}x_{i}^{r_i}$ and thus, by induction one can see that 
    \begin{equation}\label{DEqn}
       D_k^j\mathbf{x}^{\mathbf{r}}=\sum_{\substack{\beta_1+\cdots+\beta_k=j\\ 0\leq \beta_i\leq r_i}}c_{\beta_1,\cdots,\beta_k}x_{1}^{r_1-\beta_1}\cdots x_{k}^{r_k-\beta_k} \ \ \text{(with appropriate coefficients }c_{\beta_1,\dots,\beta_k})  
    \end{equation} is a homogeneous degree $n-j$ polynomial in $x_{i}$ with integer coefficients. We first prove that $\mathscr{D}^{m-1}\mathscr{S}_n(\mathbf{r})[\mu](\mathbb{K})\subseteq \mathscr{D}^{m}\mathscr{S}_n(\mathbf{r})(\mathbb{K})$ for any $\mu\geq m+1$.  Let $P\in \mathscr{D}^{m-1}\mathscr{S}_n(\mathbf{r})[\mu](\mathbb{K})$ for any $\mu\geq m+1$, so that there exist $i_1<\cdots<i_l$ for some $1\leq l\leq m$, such that $P\in V(x_{i_1})\cap\cdots\cap V(x_{i_l})$ with multiplicities of $V(x_{i_j})$ equal to $r_{i_j}$ such that $r_{i_1}+\cdots +r_{i_l}=\mu\geq m+1$.  For this point $P$, we rewrite $D_k^m\mathbf{x}^{\mathbf{r}}(P)$ by grouping the terms $x_i(P)^{r_i-\beta_i}$ in each monomial $x_{1}(P)^{r_1-\beta_1}\cdots x_{k}(P)^{r_k-\beta_k}$ in $D_k^m\mathbf{x}^{\mathbf{r}}(P)$ according to whether the index $i$ of  $x_i(P)^{r_i-\beta_i}$ is equal to $i_j$ for some $j=1,\dots, l$ or not:
    \begin{equation}\label{EqnDm}
    D_k^m\mathbf{x}^{\mathbf{r}}(P)=\sum_{\substack{\beta_1+\cdots+\beta_k=m\\ 0\leq \beta_i\leq r_i}}c_{\beta_1,\dots,\beta_k}(\prod_{j=1}^{l}x_{i_j}(P)^{r_{i_j}-\beta_{i_j}})(\prod_{i\neq i_j}x_{i}(P)^{r_i-\beta_i}).   \end{equation}
    By choice of $P$, we have $r_{i_1}+\cdots+r_{i_l}=\mu\geq m+1$, while $\beta_{i_1}+\cdots+\beta_{i_l}\leq m$. Hence, for each monomial occurring in the sum in \eqref{EqnDm}, there is some $1\leq j\leq l$ such that $r_{i_j}-\beta_{i_j}\geq 1$. Since $P\in V(x_{i_1})\cap\cdots\cap V(x_{i_l})$, all monomials in $D_k^m\mathbf{x}^{\mathbf{r}}(P)$ vanish and so $D_k^m\mathbf{x}^{\mathbf{r}}(P)=0$. 
    
    Now, for any point $P\in \mathscr{D}^{m}\mathscr{S}_n(\mathbf{r})[m](\mathbb{K})$, any monomial in the sum in \eqref{EqnDm} with $\beta_i\geq1$ for some $i\neq i_1, \dots, i_l$ has a factor of $x_{i_j}$ for some $1\leq j\leq l$, so it vanishes at $P$. Thus, the only non-vanishing monomials can come from those for which $\beta_i=0$ for all $i\neq i_1, \dots, i_l$. Furthermore, since $0\leq \beta_{i_j}\leq r_{i_j}$ and $\sum_{i=1}^{k}\beta_i=\sum_{j=1}^{l}\beta_{i_j}=m=\sum_{j=1}^{l}r_{i_j}$, it follows that $\beta_{i_j}=r_{i_j}$ for all $1\leq j\leq l$. Hence the only possible non-zero monomial (up to a non-zero coefficient) is $\prod_{i\neq i_j}x_{i}(P)^{r_i}$ and it follows that $D_k^m\mathbf{x}^{\mathbf{r}}(P)=0$ if and only if there exists $i\neq i_j$ for which $x_{i}(P)=0$. Consequently, $D_k^m\mathbf{x}^{\mathbf{r}}(P)=0$ if and only if $P\in \bigcap_{j=1}^{l}V(x_{i_j})\cap V(x_{i})$ for some $i\neq i_j$, and thus $r_i+\sum_{j=1}^{l}r_{i_j}\geq m+1$. Thus, we have obtained that
     $$\mathscr{D}^{m}\mathscr{S}_n(\mathbf{r})(\mathbb{K})=\bigcup_{l=1}^{m+1}\bigcup_{\substack{i_1<i_2<\cdots<i_l \\ r_{i_1}+r_{i_2}+\cdots+r_{i_l}\geq m+1}}V(x_{i_1})\cap V(x_{i_2})\cap\cdots\cap V(x_{i_l}).$$
     This completes the inductive step and hence the proof.  
\end{proof}

Since $\mathscr{D}^{m}\mathscr{S}_n(\mathbf{r})(\mathbb{K})=\bigcap_{i=1}^{m}V(\mathbf{x}^{\mathbf{r}}, D^i_k\mathbf{x}^{\mathbf{r}})$, by Proposition~\ref{Prop2}, for any $1\leq m\leq n-1$, the affine algebraic set $V(\mathbf{x}^{\mathbf{r}}, D_k^m\mathbf{x}^{\mathbf{r}})$ contains an entire irreducible component $V(x_{j})$ of $\mathscr{S}_n(\mathbf{r})(\mathbb{K})$ if $V(x_j)$ has multiplicity $r_j\geq m+1$. The following lemma shows that the converse is true as well.

\begin{lemma}\label{Lem1}
    $V(x_{j})\subseteq V(\mathbf{x}^{\mathbf{r}}, D_k^m\mathbf{x}^{\mathbf{r}})$ if and only if $r_j\geq m+1$, for any $0\leq m\leq n-1$.
\end{lemma}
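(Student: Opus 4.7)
The plan is to reduce the question to a divisibility check in $\mathbb{K}[x_1,\ldots,x_k]$. Since $x_j$ divides $\mathbf{x}^{\mathbf{r}}$, the containment $V(x_j)\subseteq V(\mathbf{x}^{\mathbf{r}})$ is automatic, so the lemma reduces to: $V(x_j)\subseteq V(D_k^m\mathbf{x}^{\mathbf{r}})$ if and only if $r_j\geq m+1$. Because $V(x_j)\cong\mathbb{A}^{k-1}_{\mathbb{K}}$ is an integral affine variety, this containment is equivalent to requiring $x_j\mid D_k^m\mathbf{x}^{\mathbf{r}}$ in $\mathbb{K}[x_1,\ldots,x_k]$, or equivalently that the restriction of $D_k^m\mathbf{x}^{\mathbf{r}}$ to the hyperplane $\{x_j=0\}$ is the zero polynomial.

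I would then invoke the explicit expansion \eqref{DEqn} already established in the proof of Proposition~\ref{Prop2}. A direct computation shows that each coefficient $c_{\beta_1,\ldots,\beta_k}$ equals $\binom{m}{\beta_1,\ldots,\beta_k}\prod_{i} r_i!/(r_i-\beta_i)!$, a positive integer that remains nonzero in $\mathbb{K}$ because $\operatorname{char}\mathbb{K}=0$. Since distinct multi-indices $(\beta_1,\ldots,\beta_k)$ yield distinct monomials $x_1^{r_1-\beta_1}\cdots x_k^{r_k-\beta_k}$, no cancellation can occur among the terms of $D_k^m\mathbf{x}^{\mathbf{r}}$. Consequently $x_j$ divides $D_k^m\mathbf{x}^{\mathbf{r}}$ if and only if $r_j-\beta_j\geq 1$ for every admissible multi-index, i.e., if and only if no admissible multi-index satisfies $\beta_j=r_j$.

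For the ``if'' direction, assume $r_j\geq m+1$. Then $\beta_j\leq\sum_i\beta_i=m<r_j$ for every admissible multi-index, so no term survives and $x_j\mid D_k^m\mathbf{x}^{\mathbf{r}}$. For the ``only if'' direction, assume instead that $r_j\leq m$; I would exhibit an admissible multi-index with $\beta_j=r_j$. Setting $\beta_j:=r_j$, the remaining entries must satisfy $\sum_{i\neq j}\beta_i=m-r_j\geq 0$, and the estimate $\sum_{i\neq j}r_i=n-r_j\geq m-r_j+1$ (which uses the standing hypothesis $m\leq n-1$) guarantees that valid choices $\beta_i\in[0,r_i]$ exist. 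The corresponding monomial then appears in $D_k^m\mathbf{x}^{\mathbf{r}}$ with a nonzero coefficient yet is not divisible by $x_j$, so $x_j\nmid D_k^m\mathbf{x}^{\mathbf{r}}$. The only mildly delicate point is verifying the non-cancellation of the surviving terms, which is precisely where the characteristic-zero hypothesis enters; everything else is combinatorial book-keeping.
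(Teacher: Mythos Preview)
Your proof is correct and follows essentially the same approach as the paper: both reduce the question to whether the restriction of $D_k^m\mathbf{x}^{\mathbf{r}}$ to the hyperplane $\{x_j=0\}$ is the zero polynomial, and both use the Leibniz/multinomial expansion to analyse this. The paper packages the restriction as the single expression $r_j!\binom{m}{r_j}D_k^{m-r_j}\bigl(\prod_{i\neq j}x_i^{r_i}\bigr)$ and argues it is nonzero precisely when $r_j\leq m$, whereas you work monomial-by-monomial with the explicit coefficients and the no-cancellation observation; this is a cosmetic difference rather than a genuinely different route.
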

\begin{proof}
    By the general Leibniz rule for self-composition of derivations, we have
    \begin{align*}
        D_k^m\mathbf{x}^{\mathbf{r}}=D_k^m(\prod_{i=1}^{k}x_{i}^{r_i})=\sum_{l_1+\cdots +l_k=m}\binom{m}{l_1, \dots, l_k}\prod_{1\leq i\leq k}D_k^{l_i}(x_{i}^{r_i})\\= \sum_{0\leq l_j\leq m}\binom{m}{l_j}D_k^{l_j}(x_{j}^{r_j}) \sum_{l_1+\cdots+\hat{l_j}+\cdots+ l_k=m-l_j}\binom{m-l_j}{l_1, \dots, \hat{l_j}, \dots, l_k}\prod_{i\neq j}D_k^{l_i}(x_{i}^{r_i}),
    \end{align*}    
    where $\hat{l_j}$ in the second line indicates that $l_j$ is removed. Since $V(x_j)\subseteq V(\mathbf{x}^{\mathbf{r}})$, it suffices to show $V(x_j)\cap V(D^m_k\mathbf{x}^{\mathbf{r}})=V(x_j)$ if and only if $r_j\geq m+1$. For $P\in V(x_{j})\cap V(D_k^m\mathbf{x}^{\mathbf{r}})$, the terms with $l_j<r_j$ in the above expression for $D^m_k\mathbf{x}^{\mathbf{r}}$ vanish, when evaluated at $P$. Furthermore, $D^{l_j}_k(x_j^{r_j})=0$ for all $l_j>r_j$. Thus, the only remaining terms in $D^m_k\mathbf{x}^{\mathbf{r}}(P)$ are exactly those with $l_j=r_j$, so by the general Leibniz rule, $D_k^m\mathbf{x}^{\mathbf{r}}(P)=r_j!\binom{m}{r_j}D_k^{m-r_j}(\prod_{i\neq j}x_{i}^{r_i})(P)=0$. 
    
    Note that $V(x_{j})\subseteq V(D_k^m\mathbf{x}^{\mathbf{r}})$ if and only if the polynomial $r_j!\binom{m}{r_j}D_k^{m-r_j}(\prod_{i\neq j}x_{i}^{r_i})$ in the $k-1$ variables $\{x_{i}\mid \ i\neq j\}$, vanishes (identically) at all points of $V(x_j)\subseteq\mathbb{A}^k_{\mathbb{K}}(\mathbb{K})$, which is isomorphic to the affine $k-1$ space $\mathbb{A}_{\mathbb{K}}^{k-1}(\mathbb{K})$. It follows that the polynomial $r_j!\binom{m}{r_j}D_k^{m-r_j}(\prod_{i\neq j}x_{i}^{r_i})$ in $k-1$ variables must be identically $0$. This is true if and only if $r_j\geq m+1$, as $D_k^{m-r_j}(\prod_{i\neq j}x_{i}^{r_i})$ is a non-zero polynomial in $k-1$ variables, because $r_1+\cdots+\hat{r_j}+\cdots+r_k=n-r_j>m-r_j$.
\end{proof}

\section{Geometry of higher discriminants}\label{sec3}

Throughout this section $\mathbb{K}$ will denote an algebraically closed field of arbitrary characteristic. 

\subsection{Geometry of monic univariate polynomials of degree $n$}\label{subsec3.1} 
For any positive integer $n$, let $\mathbb{K}[X]_n$ be the set of degree $n$ monic univariate polynomials over $\mathbb{K}$. We introduce certain constructions on $\mathbb{K}[X]$, and $\mathbb{K}[X]_n$ in particular, which will be useful in understanding Conjecture~\ref{con1}. 

\begin{definition}[Shift equivalence]\label{Def6}
    Define the equivalence relation $\sim$ on the monic polynomials of the polynomial ring $\mathbb{K}[X]$ as $f(X)\sim g(X)$ in $\mathbb{K}[X]$ if $g(X)=f(X-\beta)$ for some $\beta\in\mathbb{K}$. We say $f(X)$ and $g(X)$ are shift equivalent if $f(X)\sim g(X)$. We will denote the shift equivalence  class of $f(X)$ by $[f(X)]$ and the set of shift equivalence classes of monic polynomials by $[\mathbb{K}[X]]$.
\end{definition}

The shift equivalence class $[f(X)]$ of the monic polynomial $f(X)\in \mathbb{K}[X]$ consists of those monic polynomials $g(X)$, whose roots are exactly the roots of $f(X)$ up to translation -- in other words, the relative location of the roots of $g(X)$ are same as those of $f(X)$. In particular, shift equivalence restricts to a well-defined equivalence relation on $\mathbb{K}[X]_n$, whose equivalence classes will be denoted by $[\mathbb{K}[X]_n]$. Notice, if $f(X)$ satisfies the hypothesis of Conjecture~\ref{con1}, then so does any $g(X)\in[f(X)]$. The following definition is crucial in lifting Conjecture~\ref{con1}, which is a problem in one variable, to a problem about linear algebraic subvarieties of higher dimensional affine spaces. 

\begin{definition}[Characteristic maps]\label{Def7}
For any monic $f(X)\in \mathbb{K}[X]$ of degree $n$ (i.e., $f(X)\in \mathbb{K}[X]_n$) along with a fixed labelling $\overline{\alpha}=(\alpha_1, \dots, \alpha_n)$ of roots, we define the $\overline{\alpha}$-characteristic map of $f(X)$ to be the $\mathbb{K}$-algebra map $\ch_{f,\overline{\alpha}}: \mathbb{K}[x_1, \dots, x_n]\rightarrow \mathbb{K}[X]$ defined by $x_i\mapsto X-\alpha_i$ for all $1\leq i\leq n$. Consequently, define the set of characteristic maps of $f(X)$ to be the set $\ch(f):=\{\ch_{f,\overline{\alpha}}\mid \ \overline{\alpha}\in \mathfrak{R}(f)\}$, where $\mathfrak{R}(f)$ is the set of all ordered tuples $\overline{\alpha}=(\alpha_1, \dots, \alpha_n)$ obtained by permuting the roots of $f$.
\end{definition}

Note that for each $1\leq i\leq n$ and $\overline{\alpha}\in \mathfrak{R}(f)$, $\ker\ch_{f,\overline{\alpha}}$ has a ``base $i$'' presentation, given by $\ker\ch_{f,\overline{\alpha}}=(\{x_j-x_i+\gamma_{ji}\mid \ 1\leq j\leq n\})\subseteq \mathbb{K}[x_1, \dots, x_n]$, where $\gamma_{ji}=\alpha_j-\alpha_i$. Geometrically the $\overline{\alpha}$-characteristic map $\ch_{f,\overline{\alpha}}$ induces a closed embedding $\ch_{f,\overline{\alpha}}^{\#}:\mathbb{A}^1_\mathbb{K}\hookrightarrow \mathbb{A}^n_{\mathbb{K}}$ as the closed subscheme $\mathscr{L}_{f,\overline{\alpha}}:=\spec(\mathbb{K}[x_1, \dots, x_n]/\ker\ch_{f,\overline{\alpha}})\subseteq \mathbb{A}^n_{\mathbb{K}}$. Considering the ``base $1$'' presentation of $\ker\ch_{f,\overline{\alpha}}$, the set of $\mathbb{K}$-rational points of $\mathscr{L}_{f,\overline{\alpha}}$ is $$\mathscr{L}_{f,\overline{\alpha}}(\mathbb{K})=V(\ker\ch_{f,\overline{\alpha}})=\{(\beta, \beta+\gamma_{12}, \beta+\gamma_{13}, \dots, \beta+\gamma_{1n})\mid \ \beta\in\mathbb{K}\}\subseteq \mathbb{A}^n_{\mathbb{K}}(\mathbb{K}).$$ Thus, we can identify $\mathscr{L}_{f,\overline{\alpha}}(\mathbb{K})$ with the shift equivalence class $[f(X)]$ by the association $$(\beta, \beta+\gamma_{12}, \beta+\gamma_{13}, \dots, \beta+\gamma_{1n})\mapsto \prod_{j=1}^{n}(X+\beta+\gamma_{1j})= f(X+\beta+\alpha_1).$$ 
Consequently, the subscheme $\mathscr{L}_{f,\overline{\alpha}}\subseteq \mathbb{A}^n_{\mathbb{K}}$ depends only on the shift equivalence class $[f(X)]$ of $f(X)$. For each monic $f(X)\in \mathbb{K}[X]$ of degree $n$, we therefore obtain the set $\mathfrak{L}(f):=\{\mathscr{L}_{f,\overline{\alpha}}, \ \overline{\alpha}\in\mathfrak{R}(f)\}$ of affine lines in $\mathbb{A}^n_{\mathbb{K}}$, which only depends on the shift equivalence class $[f(X)]$. Furthermore, the action of the symmetric group $\mathfrak{S}_n$ on $\mathbb{A}^n_{\mathbb{K}}$ via permuting the variables of $\mathbb{K}[x_1,\dots, x_n]$, induces a transitive action on $\mathfrak{L}(f)$, so that the affine lines in $\mathfrak{L}(f)$ differ only by a permutation.

In what follows, let $f(X)=\prod_{i=1}^{n}(X-\alpha_i)\in \mathbb{K}[X]$ be a monic polynomial of degree $n\geq 2$ with a fixed ordering of roots $\overline{\alpha}=(\alpha_1, \dots, \alpha_n)\in \mathfrak{R}(f)$. Let $\mathbf{x}_n:=\prod_{i=1}^{n}x_i\in\mathbb{K}[x_1, \dots, x_n]$ be the unique elementary symmetric monomial in $n$ variables. The following lemma will allow us to transfer conditions on the derivatives of $f(X)$ in $\mathbb{K}[X]$ to appropriate conditions on transformations of (Hasse-Schmidt) D-subschemes of the monomial affine $\mathbb{K}$-scheme $\spec(\mathbb{K}[x_1,\dots, x_n]/(\mathbf{x}_n))$.

\begin{lemma}\label{Lem2}
Let $HD^i_n:\mathbb{K}[x_1, \dots, x_n]\rightarrow \mathbb{K}[x_1, \dots, x_n]$ be the $i^{th}$ multivariate Hasse-Schmidt derivation defined by \eqref{EqnHS1} and let $H_i:\mathbb{K}[X]\rightarrow \mathbb{K}[X]$ be the $i^{th}$ univariate Hasse-Schmidt derivation. Then, for all $i\geq 1$ and for any monic polynomial $f(X)=\prod_{i=1}^{n}(X-\alpha_i)\in\mathbb{K}[X]$ with a fixed ordering $\overline{\alpha}=(\alpha_1,\dots, \alpha_n)$ of roots, the following diagram commutes:
 \[
\begin{tikzcd}
 \mathbb{K}[x_1, \dots, x_n]\arrow[dd, "\ch_{f, \overline{\alpha}}"'] \arrow[r, "HD^i_n"] & \mathbb{K}[x_1,\dots, x_n] \arrow[dd, "\ch_{f, \overline{\alpha}}"] \\
                                  &                   \\
\mathbb{K}[X] \arrow[r, "H_i"']                 & \mathbb{K}[X]                
\end{tikzcd}
\]
\end{lemma}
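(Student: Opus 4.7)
The plan is to use the ``translation by $t$'' generating-function description of Hasse--Schmidt derivations, which makes the commutation essentially tautological and works uniformly in any characteristic. Concretely, I will package the family $\{HD^i_n\}_{i\geq 0}$ and $\{H_i\}_{i \geq 0}$ into single substitution operations and then observe that the characteristic map intertwines them.

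First I would establish the identity
\[
\sum_{i\geq 0} HD^i_n(P)(x_1,\dots,x_n)\, t^i \;=\; P(x_1+t,\dots,x_n+t) \quad \text{in } \mathbb{K}[x_1,\dots,x_n,t],
\]
for every $P \in \mathbb{K}[x_1,\dots,x_n]$. By $\mathbb{K}$-linearity it suffices to check this on a monomial $x_1^{a_1}\cdots x_n^{a_n}$; expanding $\prod_j (x_j+t)^{a_j}$ by the binomial theorem and collecting the coefficient of $t^i$ recovers precisely the defining formula \eqref{EqnHS1} for $HD^i_n(x_1^{a_1}\cdots x_n^{a_n})$. The analogous univariate statement $\sum_{i \geq 0} H_i(g)(X)\, t^i = g(X+t)$ is proved in the same way (it is in fact the standard definition of the $H_i$).

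Next I would apply $\ch_{f,\overline{\alpha}}$ coefficient-wise to the first identity. Extending $\ch_{f,\overline{\alpha}}$ to a $\mathbb{K}[t]$-algebra homomorphism $\mathbb{K}[x_1,\dots,x_n][t] \to \mathbb{K}[X][t]$ by $t \mapsto t$, and using that $\ch_{f,\overline{\alpha}}$ sends $x_i \mapsto X-\alpha_i$, the right-hand side becomes
\[
P(X-\alpha_1 + t,\, \dots,\, X-\alpha_n+t) \;=\; \bigl(\ch_{f,\overline{\alpha}}(P)\bigr)(X+t),
\]
where the last equality is just a rewriting of the substitution. By the univariate generating-function identity, this expression equals $\sum_{i \geq 0} H_i\bigl(\ch_{f,\overline{\alpha}}(P)\bigr)(X)\, t^i$. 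Comparing coefficients of $t^i$ in $\mathbb{K}[X][t]$ yields
\[
\ch_{f,\overline{\alpha}}\bigl(HD^i_n(P)\bigr) \;=\; H_i\bigl(\ch_{f,\overline{\alpha}}(P)\bigr) \qquad \text{for all } i \geq 0,
\]
which is exactly the commutativity of the diagram.

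There is no real obstacle here; the one point worth being careful about is that the generating-function identity is purely polynomial (no denominators appear, since the binomial coefficients in \eqref{EqnHS1} are integers), so the argument is valid in any characteristic and the extension of $\ch_{f,\overline{\alpha}}$ to a $\mathbb{K}[t]$-algebra map is unproblematic. An alternative, more pedestrian route would be to verify the multivariate Leibniz rule $HD^i_n(PQ) = \sum_{j+k=i} HD^j_n(P)\, HD^k_n(Q)$ (and its univariate analogue for $H_i$), reduce to monomials $x_1^{a_1}\cdots x_n^{a_n}$, and compute both sides term by term using $H_j((X-\alpha)^a) = \binom{a}{j}(X-\alpha)^{a-j}$; but the substitution argument above absorbs all of this bookkeeping into a single line.
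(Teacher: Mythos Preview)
Your proof is correct and takes a genuinely different (and arguably cleaner) route than the paper. The paper reduces to monomials by $\mathbb{K}$-linearity, then computes both paths around the square directly: it expands $\ch_{f,\overline{\alpha}}(HD^i_n(x_1^{\beta_1}\cdots x_n^{\beta_n}))$ from the defining formula \eqref{EqnHS1}, computes $H_i(\ch_{f,\overline{\alpha}}(x_1^{\beta_1}\cdots x_n^{\beta_n}))$ via the Leibniz rule for univariate Hasse--Schmidt derivations together with $H_j((X-\alpha)^a)=\binom{a}{j}(X-\alpha)^{a-j}$, and matches the two sums term by term. This is precisely the ``pedestrian route'' you sketch at the end of your proposal. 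Your main argument instead packages the entire family $\{HD^i_n\}$ (resp.\ $\{H_i\}$) into the single substitution $x_j\mapsto x_j+t$ (resp.\ $X\mapsto X+t$), so that commutativity of the diagram becomes the tautology that $\ch_{f,\overline{\alpha}}$ intertwines the two translations. This buys you a coordinate-free statement valid for all $i$ at once and avoids invoking the Leibniz rule explicitly; the paper's version is more hands-on but makes the combinatorial identity between the two sides completely explicit.
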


\begin{proof}
It suffices to check that the diagram commutes for any monomial in $\mathbb{K}[x_1,\dots, x_n]$ as all the arrows are $\mathbb{K}$-linear. Let $x_1^{\beta_1}x_2^{\beta_2}\dots x_n^{\beta_n}\in \mathbb{K}[x_1,\dots, x_n]$, whereby using \eqref{EqnHS1} we have $$\ch_{f, \overline{\alpha}}(HD^i_n(x_1^{\beta_1}\dots x_n^{\beta_n}))=\sum_{j_1+\dots+j_n=i}{\beta_1\choose j_1}\dots {\beta_n\choose j_n}(X-\alpha_1)^{\beta_1-j_1}\dots (X-\alpha_n)^{\beta_n-j_n}.$$
Similarly, using Leibniz rule for the univariate Hasse-Schmidt derivations, we have
\begin{align*}
 H_i(\ch_{f, \overline{\alpha}}(x_1^{\beta_1}\dots x_n^{\beta_n})) =H_i((X-\alpha_1)^{\beta_1}\dots (X-\alpha_n)^{\beta_n})\\ = \sum_{j_1+\dots+j_n=i}H_{j_1}((X-\alpha_1)^{\beta_1})\dots H_{j_n}((X-\alpha_n)^{\beta_n}),
\end{align*}
which equals $\ch_{f, \overline{\alpha}}(HD^i_n(x_1^{\beta_1}\dots x_n^{\beta_n}))$ by using the fact that $H_i((X-\alpha)^n)={n\choose i}(X-\alpha)^{n-i}$.
\end{proof}

The set $\mathbb{K}[X]_n$ can be naturally identified with $\mathbb{A}^n_{\mathbb{K}}$ via $a_0+a_1X+\dots+a_{n-1}X^{n-1}+X^{n}\mapsto (a_0, a_1,\dots, a_{n-1})$. This equips $\mathbb{K}[X]_n$ with a variety structure. When $n$ is coprime to the characteristic of $\mathbb{K}$, the set $\mathbb{K}[X]_n$ can further be naturally identified with a GIT quotient. For this, we introduce the following map.

\begin{definition}[Root map]\label{Def8}
    We define the degree $n$ root map to be the set map $\rho_n:\mathbb{A}^n_{\mathbb{K}}(\mathbb{K})\rightarrow \mathbb{K}[X]_n$ sending $(\alpha_1, \alpha_2, \dots, \alpha_n)$ to the degree $n$ polynomial $\prod_{i=1}^{n}(X+\alpha_i)$.
\end{definition}

\begin{remark}\label{Rem3}
\begin{enumerate}[(i)]
    \item[]
    \item Assume $\operatorname{char}(\mathbb{K})$ and $n$ are coprime. Let $\mathfrak{S}_n$ act on $\mathbb{A}^n_{\mathbb{K}}(\mathbb{K})$ by permutation of coordinates. Then, by the definition of $\rho_n:\mathbb{A}^n_{\mathbb{K}}(\mathbb{K})\rightarrow \mathbb{K}[X]_n$, the fibers of $\rho_n$ are the $\mathfrak{S}_n$-orbits of $\mathbb{A}^n_{\mathbb{K}}(\mathbb{K})$. Thus, $\rho_n$ induces a set bijection $\overline{\rho_n}: \mathbb{A}^n_{\mathbb{K}}(\mathbb{K})\sslash\mathfrak{S}_n\xrightarrow{\sim} \mathbb{K}[X]_n$, where $\mathbb{A}^n_{\mathbb{K}}(\mathbb{K})\sslash\mathfrak{S}_n$ is the set of $\mathbb{K}$-rational points of the GIT quotient $\mathbb{A}^n_{\mathbb{K}}\sslash\mathfrak{S}_n:=\spec(\mathbb{K}[x_1,\dots, x_n]^{\mathfrak{S}_n})$. Furthermore, since $\rho_n$ can be identified with the quotient map $\mathbb{A}^n_{\mathbb{K}}(\mathbb{K})\rightarrow \mathbb{A}^n_{\mathbb{K}}(\mathbb{K})\sslash \mathfrak{S}_n$, it follows that $\rho_n$ is a finite morphism.
    \item Let $q_S: \mathbb{K}[X]_n\rightarrow [\mathbb{K}[X]_n]$ be the map sending a degree $n$ monic polynomial $f(X)\in\mathbb{K}[X]_n$ to its shift equivalence class $[f(X)]\in [\mathbb{K}[X]_n]$. The fibers of the composite map $\mathbb{A}^n_{\mathbb{K}}(\mathbb{K})\xrightarrow{\rho_n}\mathbb{K}[X]_n\xrightarrow{q_S} [\mathbb{K}[X]_n]$ are exactly $(q_S\circ \rho_n)^{-1}([f])=\bigcup_{\overline{\alpha}\in\mathfrak{R}(f)}\mathscr{L}_{f, \overline{\alpha}}(\mathbb{K})$. Thus, $\rho_n$ formalizes the correspondence between points of $\mathscr{L}_{f, \overline{\alpha}}(\mathbb{K})$ (for any $\overline{\alpha}\in \mathfrak{R}(f)$) and the elements of the shift equivalence class $[f(X)]$ described previously.
\end{enumerate}   
\end{remark}

\subsection{Higher discriminant hypersurfaces}\label{subsec3.2} The goal of this subsection is to construct and describe higher discriminant hypersurfaces in $\mathbb{A}^n_{\mathbb{K}}$, motivating their relevance to Conjecture~\ref{con1}. 
 
 Let $f(X)=\prod_{i=1}^{n}(X-\alpha_i)\in \mathbb{K}[X]_n$ be a generic monic univariate polynomial of degree $n$ with an ordering of roots $\overline{\alpha}=(\alpha_1,\dots, \alpha_n)$ and let $f_i:=H_i(f)$ be its $i^{th}$ Hasse-Schmidt derivative. For each $1\leq i\leq n-1$, the hypothesis $\gcd(f, f_i)\neq 1$ is equivalent to the existence of $1\leq j_i\leq n$ such that we have the ideal containment $(f, f_i)\subseteq (X-\alpha_{j_i})\subseteq \mathbb{K}[X]$. By Lemma~\ref{Lem2}, since $\ch_{f, \overline{\alpha}}(\mathbf{x}_n)=f(X)$ and $\ch_{f, \overline{\alpha}}(HD_n^i\mathbf{x}_n)=f_i(X)$ for all $1\leq i\leq n-1$, by applying $\ch_{f, \overline{\alpha}}^{-1}$ to the above ideal containment in $\mathbb{K}[X]$, we obtain 
\begin{equation}\label{Eqn2}
(\mathbf{x}_n, HD^i_n\mathbf{x}_n, \ker\ch_{f,\overline{\alpha}})\subseteq (x_{j_i}, \ker\ch_{f,\overline{\alpha}})\subseteq \mathbb{K}[x_1,\dots, x_n].
\end{equation}
Now we note that $\mathbb{K}[x_1,\dots, x_n]/(x_i, \ker\ch_{f, \overline{\alpha}})\cong \mathbb{K}[X]/(X-\alpha_i)\cong \mathbb{K}$, whence $(x_i, \ker\ch_{f,\overline{\alpha}})\subseteq \mathbb{K}[x_1,\dots, x_n]$ are maximal ideals for all $1\leq i\leq n$. Thus, \eqref{Eqn2} is equivalent to the containment $V(x_{j_i})\cap \mathscr{L}_{f,\overline{\alpha}}(\mathbb{K})\subseteq V(\mathbf{x}_n, HD^i_n\mathbf{x}_n)\cap \mathscr{L}_{f,\overline{\alpha}}(\mathbb{K})\subseteq \mathbb{A}^n_{\mathbb{K}}(\mathbb{K})$ which is, furthermore, equivalent to the condition $V(x_{j_i})\cap V(\mathbf{x}_n, HD^i_n\mathbf{x}_n)\cap \mathscr{L}_{f,\overline{\alpha}}(\mathbb{K})\neq \emptyset$, since $V(x_{j_i})\cap \mathscr{L}_{f,\overline{\alpha}}(\mathbb{K})$ is a singleton. Since $\gcd(f, f_i)\neq 1$ is equivalent to the existence of an index $1\leq j_i\leq n$ for which the intersection $V(x_{j_i})\cap V(\mathbf{x}_n, HD^i_n\mathbf{x}_n)\cap \mathscr{L}_{f,\overline{\alpha}}(\mathbb{K})$ is non-empty, by noting that $V(\mathbf{x}_n)=\bigcup_{j=1}^{n}V(x_j)$, we obtain 
\begin{equation}\label{Eqn3}
    \gcd(f, f_i)\neq 1 \iff V(\mathbf{x}_n, HD_n^i\mathbf{x}_n)\cap \mathscr{L}_{f,\overline{\alpha}}(\mathbb{K})\neq \emptyset.
\end{equation}
Note that the non-emptiness of $V(\mathbf{x}_n, HD_n^i\mathbf{x}_n)\cap \mathscr{L}_{f,\overline{\alpha}}(\mathbb{K})$ is independent of the choice of  ordering of roots $\overline{\alpha}\in \mathfrak{R}(f)$. This is because $\mathfrak{S}_n$ acts transitively on $\mathfrak{L}(f)=\{\mathscr{L}_{f,\overline{\alpha}}, \ \overline{\alpha}\in\mathfrak{R}(f)\}$ and $\mathbf{x}_n, HD^i_n\mathbf{x}_n\in \mathbb{K}[x_1,\dots, x_n]^{\mathfrak{S}_n}$, making $V(\mathbf{x}_n, HD^i_n\mathbf{x}_n)$ $\mathfrak{S}_n$-invariant for all $1\leq i\leq n-1$. In fact, note that for all $0\leq i\leq n-1$, we have $HD^i_n\mathbf{x}_n=e_{n-i}(x_1,\dots, x_n)$, the degree $n-i$ elementary symmetric polynomial in $n$ variables.  

\begin{remark}\label{Rem4}
We describe the affine algebraic subsets $V(\mathbf{x}_n, HD_n^i\mathbf{x}_n)\subseteq \mathbb{A}^n_{\mathbb{K}}(\mathbb{K})$ appearing in \eqref{Eqn3}. Let $\mathbf{x}_{n-1}^j:=\prod_{l\neq j}x_l$ and note that $V(\mathbf{x}_n, HD_n^i\mathbf{x}_n)=\bigcup_{j=1}^{n}V(x_j, HD_n^i\mathbf{x}_n)$. Since $\mathbf{x}_n=x_j\cdot\mathbf{x}^j_{n-1}$, it follows by the Leibniz rule for Hasse-Schmidt derivations, that $$HD_n^i\mathbf{x}_n=x_jHD^i_n\mathbf{x}^j_{n-1}+HD_n^{i-1}\mathbf{x}^j_{n-1}= x_jHD^i_{n-1,j}\mathbf{x}^j_{n-1}+HD_{n-1,j}^{i-1}\mathbf{x}^j_{n-1},$$ where $HD^i_{n-1,j}:\mathbb{K}[x_1,\dots, \hat{x_j}, \dots, x_n]\rightarrow \mathbb{K}[x_1,\dots, \hat{x_j}, \dots, x_n]$ is the natural Hasse-Schmidt derivation obtained by restricting $HD^i_n$ on $\mathbb{K}[x_1,\dots, x_n]$ to $\mathbb{K}[x_1,\dots, \hat{x_j}, \dots, x_n]$. Consequently, we obtain $V(x_j, HD^i_n\mathbf{x}_n)=V(x_j, HD^{i-1}_{n-1, j}\mathbf{x}_{n-1}^j)\subseteq \mathbb{A}^n_{\mathbb{K}}(\mathbb{K})$. In other words, consider $V(HD^{i-1}_{n-1, j}\mathbf{x}_{n-1}^j)\subseteq Z^j_n(\mathbb{K})=\mathbb{A}^{n-1}_{\mathbb{K}}(\mathbb{K})$, where $Z^j_n=\spec(\mathbb{K}[x_1, \dots, x_j, \dots, x_n]/(x_j))$. Then $V(x_j, HD_n^i\mathbf{x}_n)$ is the image (henceforth denoted by $V^j_{n-1}(HD^{i-1}_{n-1}\mathbf{x}_{n-1})$) of $V(HD^{i-1}_{n-1, j}\mathbf{x}_{n-1}^j)\subseteq Z^j_n(\mathbb{K})$ in $\mathbb{A}^n_{\mathbb{K}}(\mathbb{K})$ under the natural inclusion $Z^j_n\hookrightarrow \mathbb{A}^n_{\mathbb{K}}$ as a closed subscheme. Thus,
\begin{equation}\label{Eqn4}
    V(\mathbf{x}_n, HD^i_n\mathbf{x}_n)=\bigcup_{j=1}^{n}V^j_{n-1}(HD^{i-1}_{n-1}\mathbf{x}_{n-1}).
\end{equation}
\end{remark}

\begin{definition}[Shift projection maps]\label{Def9}
  For all $1\leq j\leq n$, let $Z^j_n(\mathbb{K})$ be the coordinate hyperplane of $\mathbb{A}^n_{\mathbb{K}}(\mathbb{K})$ corresponding to $x_j=0$. Define the $j^{th}$ shift projection map to be the map $\mathfrak{p}^j_n: \mathbb{A}^n_{\mathbb{K}}(\mathbb{K})\rightarrow Z^j_n(\mathbb{K})$ which sends $(x_1, \dots, x_j, \dots, x_n)$ to $(x_1-x_j, \dots, x_j-x_j, \dots, x_n-x_j)$. 
\end{definition}

The utility of shift projection maps is in the following observation. By \eqref{Eqn3}, we are interested in the non-emptiness of $V(\mathbf{x}_n, HD_n^i\mathbf{x}_n)\cap \mathscr{L}_{f,\overline{\alpha}}(\mathbb{K})$ which, by \eqref{Eqn4}, is equivalent to the non-emptiness of $V^j_{n-1}(HD^{i-1}_{n-1}\mathbf{x}_{n-1})\cap \mathscr{L}_{f,\overline{\alpha}}(\mathbb{K})$ for some $1\leq j\leq n$. If $\mathbf{y}=(y_1, \dots, 0, \dots, y_n)\in V^j_{n-1}(HD^{i-1}_{n-1}\mathbf{x}_{n-1})\cap \mathscr{L}_{f,\overline{\alpha}}(\mathbb{K})$ (where $0$ is at the $j^{th}$ coordinate), then $$\mathscr{L}_{f,\overline{\alpha}}(\mathbb{K})= \{(\beta+y_1, \dots, \beta, \dots, \beta+y_n)\mid \ \beta\in \mathbb{K}\}=(\mathfrak{p}^j_n)^{-1}(\mathbf{y}).$$ Thus, \eqref{Eqn3} is equivalent to
\begin{equation}\label{Eqn5}
\gcd(f, f_i)\neq 1\quad \iff \quad \mathscr{L}_{f,\overline{\alpha}}(\mathbb{K})\subseteq X^i_n(\mathbb{K}):=\bigcup_{j=1}^{n}(\mathfrak{p}^j_n)^{-1}(V^j_{n-1}(HD^{i-1}_{n-1}\mathbf{x}_{n-1})).
\end{equation}

The equivalence \eqref{Eqn5} can be formalized using the root map $\rho_n:\mathbb{A}_{\mathbb{K}}^n(\mathbb{K})\rightarrow \mathbb{K}[X]_n$ as follows.

\begin{lemma}\label{Lem4}
    Let $\mathfrak{X}^i_n(\mathbb{K}):=\{f(X)\in \mathbb{K}[X]_n\mid \gcd(f, f_i)\neq 1\}\subseteq \mathbb{K}[X]_n$ and let $X^i_n(\mathbb{K})=\bigcup_{j=1}^{n}(\mathfrak{p}^j_n)^{-1}(V^j_{n-1}(D^{i-1}_{n-1}\mathbf{x}_{n-1}))\subseteq \mathbb{A}^n_{\mathbb{K}}(\mathbb{K})$. Then $\rho_n^{-1}(\mathfrak{X}^i_n(\mathbb{K}))=X^i_n(\mathbb{K})$.
\end{lemma}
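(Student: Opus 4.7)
The plan is to reduce the claim directly to the equivalence \eqref{Eqn5} already established earlier in this subsection. Pick an arbitrary point $\overline{\alpha} = (\alpha_1, \dots, \alpha_n) \in \mathbb{A}^n_{\mathbb{K}}(\mathbb{K})$ and set $f(X) := \rho_n(\overline{\alpha}) = \prod_{j=1}^{n}(X+\alpha_j) \in \mathbb{K}[X]_n$. Then the roots of $f$ are $\overline{\alpha}' := (-\alpha_1, \dots, -\alpha_n) \in \mathfrak{R}(f)$. My goal is to show the chain of equivalences $\overline{\alpha} \in X^i_n(\mathbb{K}) \iff \mathscr{L}_{f, \overline{\alpha}'}(\mathbb{K}) \subseteq X^i_n(\mathbb{K}) \iff \gcd(f, f_i)\neq 1 \iff f \in \mathfrak{X}^i_n(\mathbb{K})$; the middle equivalence is exactly \eqref{Eqn5}, so only the outermost needs attention.

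First I would unpack the ``base $1$'' description of $\mathscr{L}_{f,\overline{\alpha}'}(\mathbb{K})$ from Section~\ref{subsec3.1}: the line parametrizes as $\{(\beta, \beta + (\alpha_2-\alpha_1), \dots, \beta + (\alpha_n-\alpha_1)) : \beta \in \mathbb{K}\}$, and in particular $\overline{\alpha}$ itself sits on this line at $\beta = \alpha_1$. This gives the forward implication immediately: if $\mathscr{L}_{f,\overline{\alpha}'}(\mathbb{K}) \subseteq X^i_n(\mathbb{K})$, then $\overline{\alpha}\in X^i_n(\mathbb{K})$.

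For the reverse, the key observation is that $\mathscr{L}_{f,\overline{\alpha}'}(\mathbb{K})$ is a single fiber of every shift projection map $\mathfrak{p}^j_n$: indeed, since $\mathfrak{p}^j_n(x_1,\dots,x_n) = (x_1-x_j, \dots, 0, \dots, x_n-x_j)$ is constant along the diagonal-shift orbit of $\overline{\alpha}$, we have $\mathscr{L}_{f,\overline{\alpha}'}(\mathbb{K}) = (\mathfrak{p}^j_n)^{-1}(\mathfrak{p}^j_n(\overline{\alpha}))$ for each $1\leq j\leq n$. By definition, $\overline{\alpha}\in X^i_n(\mathbb{K})$ means that $\mathfrak{p}^j_n(\overline{\alpha}) \in V^j_{n-1}(HD^{i-1}_{n-1}\mathbf{x}_{n-1})$ for some $j$; but then the entire fiber $(\mathfrak{p}^j_n)^{-1}(\mathfrak{p}^j_n(\overline{\alpha}))$, which equals $\mathscr{L}_{f,\overline{\alpha}'}(\mathbb{K})$, is contained in $(\mathfrak{p}^j_n)^{-1}(V^j_{n-1}(HD^{i-1}_{n-1}\mathbf{x}_{n-1})) \subseteq X^i_n(\mathbb{K})$.

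Combining these two observations with \eqref{Eqn5} and the definition of $\mathfrak{X}^i_n(\mathbb{K})$ yields the full equivalence $\overline{\alpha}\in \rho_n^{-1}(\mathfrak{X}^i_n(\mathbb{K})) \iff \overline{\alpha}\in X^i_n(\mathbb{K})$, as required. There is no real obstacle here; the statement is essentially a bookkeeping consequence of \eqref{Eqn5}, and the main content is making explicit that $X^i_n(\mathbb{K})$ is a union of $\mathfrak{p}^j_n$-fibers and that these fibers are precisely the affine lines $\mathscr{L}_{f,\overline{\alpha}'}(\mathbb{K})$ parametrizing shift equivalence classes.
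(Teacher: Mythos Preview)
Your proof is correct and follows essentially the same route as the paper: both reduce the claim to the equivalence \eqref{Eqn5}, and then argue that membership of a single point $\overline{\alpha}$ in $X^i_n(\mathbb{K})$ is equivalent to the containment of its whole diagonal-shift line $\mathscr{L}_{f,\overline{\alpha}'}(\mathbb{K})$. The paper packages this last step via Remark~\ref{Rem3}(ii) and the inclusion $\rho_n^{-1}(f)\subseteq \rho_n^{-1}(q_S^{-1}([f]))$, whereas you make the fiber structure of the $\mathfrak{p}^j_n$ explicit; these are cosmetic differences only.
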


\begin{proof}
     By \eqref{Eqn5}, $f\in \mathfrak{X}^i_n(\mathbb{K})$ if and only if $\bigcup_{\overline{\alpha}\in\mathfrak{R}(f)}\mathscr{L}_{f, \overline{\alpha}}(\mathbb{K})\subseteq X^i_n(\mathbb{K})$. Equivalently, by Remark~\ref{Rem3}(ii) we obtain that $f\in \mathfrak{X}^i_n(\mathbb{K})$ if and only if $\rho_n^{-1}(q_S^{-1}([f]))\subseteq X^i_n(\mathbb{K})$. Since $\rho_n^{-1}(f)\subseteq \rho_n^{-1}(q_S^{-1}([f]))$, it follows that $\rho_n^{-1}(f)\subseteq \rho_n^{-1}(\mathfrak{X}^i_n(\mathbb{K}))$ if and only if $\rho_n^{-1}(f)\subseteq X^i_n(\mathbb{K})$. 
\end{proof}

\begin{definition}
    In light of Lemma~\ref{Lem4}, for all $1\leq i\leq n-1$, we define the \textit{$i^{th}$ discriminant hypersurface of degree $n$} over an algebraically closed field $\mathbb{K}$ to be $X^i_n(\mathbb{K})\subseteq \mathbb{A}^n_{\mathbb{K}}(\mathbb{K})$.
\end{definition}

\begin{definition}
    For a generic degree $n$ monic univariate polynomial $f(X)=X^n+y_1X^{n-1}+y_2X^{n-2}+\dots+ y_{n-1}X+y_n$ and for all $1\leq i\leq n-1$, define the $i^{th}$ discriminant polynomial $\disc_n^i(y_1,\dots, y_n)$ to be the degree $2n-i$ homogeneous polynomial given by the resultant $\Res(f, f_i)$ in the ring $\mathbb{Z}[y_1, \dots, y_n]$.
\end{definition}

Let $\kappa_n: \mathbb{A}^n_{\mathbb{K}}(\mathbb{K})\rightarrow \mathbb{K}[X]_n$ be the ``coefficient map" defined by $(y_1,\dots, y_n)\mapsto X^n+y_1X^{n-1}+\dots+y_{n-1}X+y_n$, which is also a bijection. Then by definition of the resultant, we immediately see that the zero locus $\Delta^i_n(\mathbb{K}):=V(\disc_n^i)\subseteq \mathbb{A}^n_{\mathbb{K}}(\mathbb{K})$ equals $\kappa_n^{-1}(\mathfrak{X}^i_n(\mathbb{K}))$. Thus, we have
\begin{equation}\label{Eqn:relations}
\mathfrak{X}^i_n(\mathbb{K})=\kappa_n(\Delta^i_n(\mathbb{K}))=\rho_n(X^i_n(\mathbb{K}))
\end{equation}
We note that in \eqref{Eqn:relations} above, we do not have an explicit understanding of the defining conditions/ polynomials for $\mathfrak{X}^i_n(\mathbb{K})$ and $\Delta^i_n(\mathbb{K})$. However, by \eqref{Eqn5} we note that $X^i_n(\mathbb{K})$ have a very explicit description in terms of the shift projection maps and the zero sets of elementary symmetric polynomials. The following proposition will be useful in analyzing $X^i_n(\mathbb{K})$ further.

\begin{proposition}\label{Prop3}
        For each $1\leq i\leq n$ and $j\neq i$, let $\Phi_{ij}:Z^i_n(\mathbb{K})\rightarrow Z^i_n(\mathbb{K})$ be the regular map  $(x_1, \dots, x_{i-1}, 0, x_{i+1},\dots, x_n)\mapsto (y_1, \dots, y_{i-1}, 0,  y_{i+1}, \dots, y_n)$, where $y_j=-x_j$ and $y_l=x_l-x_j$ for all $l\neq j$. Then each $\Phi_{ij}$ is an involution on $Z^i_n(\mathbb{K})$ such that
        \begin{equation}\label{Eqn6}
            (\mathfrak{p}^{j}_n)^{-1}(V^{j}_{n-1}(HD^{k}_{n-1}\mathbf{x}_{n-1}))= (\mathfrak{p}^{i}_n)^{-1}(\Phi_{ij}(V^{i}_{n-1}(HD^{k}_{n-1}\mathbf{x}_{n-1}))).
        \end{equation}
    \end{proposition}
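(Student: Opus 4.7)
The plan is to verify the two claims in turn: first that $\Phi_{ij}$ is an involution on $Z^i_n(\mathbb{K})$, and then that the equality of subsets of $\mathbb{A}^n_{\mathbb{K}}(\mathbb{K})$ in \eqref{Eqn6} holds. For the involution, I would compute $\Phi_{ij}\circ\Phi_{ij}$ directly from the definition. Starting with $(x_1,\dots,x_{i-1},0,x_{i+1},\dots,x_n)$, a single application yields entries $y_j=-x_j$ and $y_l=x_l-x_j$ for $l\neq i,j$. A second application then sends $y_j\mapsto -y_j=x_j$ and $y_l\mapsto y_l-y_j=(x_l-x_j)-(-x_j)=x_l$ for $l\neq i,j$, recovering the original tuple.

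For the identity \eqref{Eqn6}, the strategy is to unfold both sides and observe that they describe the vanishing locus of the same symmetric polynomial evaluated at the same multiset of values. The key fact (already used in Remark~\ref{Rem4}) is that $HD^k_{n-1,l}\mathbf{x}^l_{n-1}$ is the elementary symmetric polynomial $e_{n-1-k}$ in the $n-1$ variables indexed by $\{1,\dots,n\}\setminus\{l\}$, which is invariant under permutation of its arguments. Consequently, a point $\mathbf{x}=(x_1,\dots,x_n)\in\mathbb{A}^n_{\mathbb{K}}(\mathbb{K})$ lies in $(\mathfrak{p}^{j}_n)^{-1}(V^{j}_{n-1}(HD^{k}_{n-1}\mathbf{x}_{n-1}))$ precisely when $e_{n-1-k}$ vanishes on the multiset $\{x_l-x_j : l\neq j\}$.

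For the right-hand side, using that $\Phi_{ij}$ is an involution, the condition $\mathbf{x}\in (\mathfrak{p}^{i}_n)^{-1}(\Phi_{ij}(V^{i}_{n-1}(HD^{k}_{n-1}\mathbf{x}_{n-1})))$ is equivalent to $\Phi_{ij}(\mathfrak{p}^i_n(\mathbf{x}))\in V^{i}_{n-1}(HD^{k}_{n-1}\mathbf{x}_{n-1})$. A short calculation, unwinding the definitions of $\mathfrak{p}^i_n$ and $\Phi_{ij}$, shows that $\Phi_{ij}(\mathfrak{p}^i_n(\mathbf{x}))$ has $0$ in its $i$-th slot, $x_i-x_j$ in its $j$-th slot, and $x_l-x_j$ in its $l$-th slot for $l\neq i,j$. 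Hence the condition reduces to the vanishing of $e_{n-1-k}$ on the multiset $\{x_i-x_j\}\cup\{x_l-x_j : l\neq i,j\}=\{x_l-x_j : l\neq j\}$. Since this is exactly the multiset arising on the left-hand side, symmetry of $e_{n-1-k}$ yields the claimed equality.

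The main obstacle here is purely index-tracking: one must carefully keep straight which coordinate is being set to zero under each shift projection, and verify that the ``swap'' performed by $\Phi_{ij}$ precisely exchanges the roles of the $i$-th and $j$-th slots in the resulting multiset. Once the symmetry of $e_{n-1-k}$ is invoked, the whole argument becomes a matter of comparing two multisets, which agree on the nose. This approach works uniformly in all characteristics, since the characterization $HD^k_m\mathbf{x}_m=e_{m-k}$ follows directly from the defining formula \eqref{EqnHS1}.
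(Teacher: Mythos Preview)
Your proposal is correct and follows essentially the same approach as the paper: both arguments compute $\Phi_{ij}(\mathfrak{p}^i_n(\mathbf{x}))$ explicitly and then invoke the symmetry of $HD^k_{n-1}\mathbf{x}_{n-1}=e_{n-1-k}$ to identify the vanishing conditions on both sides. Your multiset phrasing is a slightly cleaner way of packaging the same coordinate-swap that the paper carries out explicitly, and you additionally spell out the involution check which the paper leaves implicit.
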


 \begin{proof}
     Let $(x_1,\dots, x_n)\in (\mathfrak{p}^{j}_n)^{-1}(V^{j}_{n-1}(HD^{k}_{n-1}\mathbf{x}_{n-1}))$, whereby equivalently $\mathfrak{p}^{j}_n(x_1,\dots, x_n)=(x_1-x_j, \dots, x_{j-1}-x_j, 0, x_{j+1}-x_j, \dots, x_n-x_j)\in V^{j}_{n-1}(HD^{k}_{n-1}\mathbf{x}_{n-1})\subseteq Z^j_n(\mathbb{K})$, i.e.,
     \begin{equation}\label{Eqprop1}
         HD^k_{n-1}\mathbf{x}_{n-1}(x_1-x_j, \dots, x_{j-1}-x_j, x_{j+1}-x_j,\dots, x_n-x_j)=0.
     \end{equation}
     Let $(y_1,\dots, y_{i-1}, 0, y_{i+1}, \dots, y_n):=\mathfrak{p}^{i}_n(x_1,\dots, x_n)=(x_1-x_i, \dots, x_{i-1}-x_i, 0, x_{i+1}-x_i, \dots, x_n-x_i)$ and then note that $y_l-y_j=x_l-x_j$ for all $l\neq i$ and $-y_j=x_i-x_j$. Substituting these for $x_l-x_j$ in \eqref{Eqprop1} for all $l\neq j$ (assume $i<j$ without loss of generality):
     \begin{align*}
         HD^k_{n-1}\mathbf{x}_{n-1}(y_1-y_j, \dots, y_{i-1}-y_j, -y_j, y_{i+1}-y_j,\dots, y_{j-1}-y_j, y_{j+1}-y_j,\dots, y_n-y_j)=0.
     \end{align*} Furthermore, since $HD^k_{n-1}\mathbf{x}_{n-1}(z_1,\dots, z_{n-1})$ is a symmetric polynomial in the $z_i$, we equivalently obtain the following by shifting $-y_j$ from the $i^{th}$ coordinate the $(j-1)^{th}$ coordinate
     \begin{equation}\label{Eqprop2}
         HD^k_{n-1}\mathbf{x}_{n-1}(y_1-y_j, \dots, y_{i-1}-y_j, y_{i+1}-y_j, \dots, y_{j-1}-y_j, -y_j, y_{j+1}-y_j,\dots, y_n-y_j)=0.
     \end{equation} By definition of the involution $\Phi_{ij}:Z^i_n(\mathbb{K})\rightarrow Z^i_n(\mathbb{K})$, we see that $\Phi_{ij}(y_1,\dots, y_{i-1}, 0, y_{i+1},\dots, y_n)=\Phi_{ij}(\mathfrak{p}^i_n(x_1,\dots, x_n)\in V^i_{n-1}(HD^k_{n-1}\mathbf{x}_{n-1})$. Since each of the above steps is reversible, we see that $(x_1,\dots, x_n)\in (\mathfrak{p}^{j}_n)^{-1}(V^{j}_{n-1}(HD^{k}_{n-1}\mathbf{x}_{n-1}))$ if and only if $\Phi_{ij}(\mathfrak{p}^i_n(x_1,\dots, x_n)\in V^i_{n-1}(HD^k_{n-1}\mathbf{x}_{n-1})$. Since $\Phi_{ij}$ is an involution, we have $(\mathfrak{p}^{j}_n)^{-1}(V^{j}_{n-1}(HD^{k}_{n-1}\mathbf{x}_{n-1}))=(\mathfrak{p}^{i}_n)^{-1}(\Phi_{ij}(V^{i}_{n-1}(HD^{k}_{n-1}\mathbf{x}_{n-1})))$.
 \end{proof} 

 The following lemma describes the composition of the involutions $\Phi_{ij}: Z^i_n(\mathbb{K})\rightarrow Z^i_n(\mathbb{K})$, with each other and with transpositions, for a fixed $1\leq i\leq n$. We skip the proof, which is a straightforward computation.

 \begin{lemma}\label{Lem3}
     Let $1\leq i\leq n$ and $1\leq j_1,\neq j_2\leq n$, distinct from $i$. Let $\tau_{j_1,j_2}:Z^i_n(\mathbb{K})\rightarrow Z^i_n(\mathbb{K})$ be the transposition of coordinates $x_{j_1}$ and $x_{j_2}$ for all $(x_1,\dots, x_{i-1}, 0, x_{i+1}, \dots, x_n)\in Z^i_n(\mathbb{K})$. Then we have $\Phi_{ij_1}\circ\Phi_{ij_2}=\tau_{j_1j_2}\circ \Phi_{ij_1}=\Phi_{ij_2}\circ \tau_{j_1j_2}$. Furthermore, for any $1\leq j, j_1, j_2\leq n$ all mutually distinct and not equal to $i$, we have $\Phi_{ij}\circ \tau_{j_1j_2}=\tau_{j_1j_2}\circ \Phi_{ij}$.
 \end{lemma}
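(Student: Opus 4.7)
The plan is to unravel the definition of $\Phi_{ij}$ and verify each identity by direct coordinate-wise computation on an arbitrary $\mathbb{K}$-point $\mathbf{x}=(x_1,\dots,x_{i-1},0,x_{i+1},\dots,x_n)\in Z^i_n(\mathbb{K})$. Since every map involved is a regular map of affine $\mathbb{K}$-schemes, checking the equalities on $\mathbb{K}$-points suffices, and the coordinates with indices in $\{i,j_1,j_2\}$ (or $\{i,j,j_1,j_2\}$ for the second identity) require separate tracking from the remaining indices.

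For the chain $\Phi_{ij_1}\circ\Phi_{ij_2}=\tau_{j_1j_2}\circ\Phi_{ij_1}=\Phi_{ij_2}\circ\tau_{j_1j_2}$, I first compute $\mathbf{y}:=\Phi_{ij_2}(\mathbf{x})$, which by definition satisfies $y_{j_2}=-x_{j_2}$, $y_i=0$, and $y_l=x_l-x_{j_2}$ elsewhere. Plugging this into $\Phi_{ij_1}$ yields a point whose $j_1$-entry is $-(x_{j_1}-x_{j_2})=x_{j_2}-x_{j_1}$, whose $j_2$-entry is $-x_{j_2}-(x_{j_1}-x_{j_2})=-x_{j_1}$, and whose generic entry is $x_l-x_{j_1}$. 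I will then compare this with $\tau_{j_1j_2}\circ\Phi_{ij_1}(\mathbf{x})$: the intermediate point $\Phi_{ij_1}(\mathbf{x})$ has $j_1$-entry $-x_{j_1}$, $j_2$-entry $x_{j_2}-x_{j_1}$, and generic entry $x_l-x_{j_1}$; swapping the $j_1,j_2$ coordinates via $\tau_{j_1j_2}$ reproduces the preceding output. The third expression $\Phi_{ij_2}\circ\tau_{j_1j_2}(\mathbf{x})$ is handled by the same kind of computation with the roles of $j_1$ and $j_2$ exchanged.

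For the second identity, suppose $j\notin\{i,j_1,j_2\}$. The map $\Phi_{ij}$ negates the $j$-entry and subtracts $x_j$ from every other non-$i$ entry uniformly, while $\tau_{j_1j_2}$ fixes the $j$-entry and only permutes two coordinates on which $\Phi_{ij}$ acts in the same way. Consequently the two compositions commute; a direct computation shows that both $\Phi_{ij}\circ\tau_{j_1j_2}(\mathbf{x})$ and $\tau_{j_1j_2}\circ\Phi_{ij}(\mathbf{x})$ produce the point whose $j$-entry is $-x_j$, whose $j_1$-entry is $x_{j_2}-x_j$, whose $j_2$-entry is $x_{j_1}-x_j$, whose $i$-entry is $0$, and whose remaining entries equal $x_l-x_j$.

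The main challenge here is purely notational: keeping straight which coordinate gets negated, which gets shifted by $-x_{j_k}$, and verifying at every stage that the $i$-th coordinate remains $0$. No substantive mathematical obstacle arises, which is why the statement is asserted without proof as a ``straightforward computation.''
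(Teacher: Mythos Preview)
Your proposal is correct and is precisely the ``straightforward computation'' the paper alludes to but omits; the paper gives no proof at all beyond that phrase, and your coordinate-by-coordinate verification of each identity on an arbitrary point of $Z^i_n(\mathbb{K})$ is exactly what is intended. The computations you outline are accurate (in particular the key outputs $z_{j_1}=x_{j_2}-x_{j_1}$, $z_{j_2}=-x_{j_1}$, $z_l=x_l-x_{j_1}$ for $\Phi_{ij_1}\circ\Phi_{ij_2}$ match those of the other two compositions), so there is nothing to add.
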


 \begin{remark}\label{Rem5new}
     For brevity, let $\Phi_j:=\Phi_{nj}: Z^n_n(\mathbb{K})\rightarrow Z^n_n(\mathbb{K})$ be the involution described in Proposition~\ref{Prop3} for $i=n$, $j\neq i$ and let $\mathfrak{p}_n:=\mathfrak{p}^n_n:\mathbb{A}^n_{\mathbb{K}}(\mathbb{K})\rightarrow Z^n_n(\mathbb{K})$ be the $n^{th}$ shift projection map. Furthermore, define $\Phi_n:Z^n_n(\mathbb{K})\rightarrow Z^n_n(\mathbb{K})$ to be the identity map. By Proposition~\ref{Prop3}, we have $ (\mathfrak{p}^{j}_n)^{-1}(V^{j}_{n-1}(HD^{i-1}_{n-1}\mathbf{x}_{n-1}))= \mathfrak{p}_n^{-1}(\Phi_{j}(V^{n}_{n-1}(HD^{i-1}_{n-1}\mathbf{x}_{n-1})))$ for all $1\leq j\leq n$. The involutions $\Phi_j: Z^n_n(\mathbb{K})\rightarrow Z^n_n(\mathbb{K})$ are regular for all $1\leq j\leq n$: for $j\neq n$, $\Phi_j$ is induced by the corresponding algebra isomorphism
    \begin{equation}\label{EqnMor}
        \Phi^{\#}_j: \mathbb{K}[x_1,\dots, x_{n-1}]\rightarrow \mathbb{K}[x_1,\dots, x_{n-1}], \qquad \Phi^{\#}_j(x_l)=\begin{cases}
            x_l-x_j, \quad l\neq j, \\
            -x_j, \quad l=j.
        \end{cases}
    \end{equation} Letting $\Phi^{\#}_n:\mathbb{K}[x_1,\dots, x_{n-1}]\rightarrow \mathbb{K}[x_1,\dots, x_{n-1}]$ be the identity map corresponding to $\Phi_n$, we have $\Phi_{j}(V^n_{n-1}(HD^{i-1}_{n-1}\mathbf{x}_{n-1}))=V^n_{n-1}(\Phi^{\#}_{j}(HD^{i-1}_{n-1}\mathbf{x}_{n-1}))\subseteq Z^n_n(\mathbb{K})$ for any $1\leq j\leq n$. Thus \eqref{Eqn5} yields:
     \begin{equation}\label{Eqn:shift}
     X^i_n(\mathbb{K})=\mathfrak{p}_n^{-1}(\bigcup_{j=1}^{n}V^n_{n-1}(\Phi^{\#}_j(HD^{i-1}_{n-1}\mathbf{x}_{n-1}))), \ \ \forall 1\leq i\leq n-1
     \end{equation}
     Note that $\bigcup_{j=1}^{n}V^n_{n-1}(\Phi^{\#}_j(HD^{i-1}_{n-1}\mathbf{x}_{n-1}))$ is an algebraic subset in $Z^n_n(\mathbb{K})\cong \mathbb{A}^{n-1}_{\mathbb{K}}(\mathbb{K})$, i.e., affine space of one lower dimension.
 \end{remark}

 \subsection{Arithmetic Casas-Alvero scheme and higher discriminants}\label{subsec3.3}
In this subsection, we provide an alternate description of the set $X_n(\mathbb{K})$ of $\mathbb{K}$-rational points of the arithmetic Casas-Alvero schemes $X_n$ considered in \cite{GVB}, for any algebraically closed field $\mathbb{K}$. This will be utilized in establishing Theorem~\ref{mainthm} and in Section~\ref{sec6}.

We briefly recall the construction of the weighted projective schemes $X_n$. For each $n\geq 2$, let $\mathbb{Z}^w[y_1,y_2,\dots, y_{n-1}, y_n]$ be the graded algebra of $n$ variables $y_1,\dots, y_n$, where $y_i$ has weight $i$. Consider the \textit {reduced $i^{th}$ discriminant polynomials} $\disc^i_n(y_1,\dots, y_{n-1}, 0)$ (by setting $y_n=0$) for $1\leq i\leq n-1$, which are homogeneous polynomials in $\mathbb{Z}^w[y_1,y_2,\dots, y_{n-1}, y_n]$ of weighted degree $n(n-i)$. The $n^{th}$ arithmetic Casas-Alvero scheme is the weighted projective $\mathbb{Z}$-subscheme $X_n\subseteq \mathbb{P}_{\mathbb{Z}}(1,2,\dots, n-1)$ defined by the ideal $\langle \disc^i_n(y_1,\dots, y_{n-1},0), \ 1\leq i\leq n-1\rangle$.

For any algebraically closed field $\mathbb{K}$, it follows that the affine cone of $(X_n)_{\mathbb{K}}:=X_n\times_{\spec(\mathbb{Z})}\spec(\mathbb{K})$ is equal to $\bigcap_{i=1}^{n-1}\Delta^i_n(\mathbb{K})\cap V(y_n)\subseteq \mathbb{A}^n_{\mathbb{K}}(\mathbb{K})$, where $\Delta^i_n(\mathbb{K}):=V(\disc^i_n(y_1,\dots, y_n))\subseteq \mathbb{A}^n_{\mathbb{K}}(\mathbb{K})$ and $\mathbb{A}^n_{\mathbb{K}}=\spec(\mathbb{K}[y_1,\dots, y_n])$. Then we have (see \cite[Definition~3.1.9]{TH}):
\[ X_n(\mathbb{K})=(\widehat{X}_n(\mathbb{K})\setminus \{\mathbf{0}\})/ \mathbb{G}^w_m(\mathbb{K}); \ \ \widehat{X}_n(\mathbb{K}):=\bigcap_{i=1}^{n-1}\Delta^i_n(\mathbb{K})\cap V(y_n),\]
where $\mathbb{G}^w_m$ denotes the weighted action of $\mathbb{G}_m$ on $\mathbb{A}^n\setminus\{\mathbf{0}\}$ by $\lambda.(a_1,\dots, a_n)=(\lambda a_1, \lambda^2 a_2,\dots, \lambda^n a_n)$.

Note that there is a graded $\mathbb{Z}$-algebra map $\nu^{\#}_n: \mathbb{Z}^w[y_1,\dots, y_n]\rightarrow \mathbb{Z}[x_1,\dots, x_n]$, where $x_i$'s have weight $1$, given by $y_i\mapsto (-1)^{i}HD^{n-i}\mathbf{x}_n$, for all $1\leq i\leq n$. That this map is weighted graded follows from the fact that $HD^i_n\mathbf{x}_n=e_{n-i}(x_1,\dots, x_n)$, i.e., the degree $n-i$ elementary symmetric polynomial in $x_1,\dots, x_n$. Extending scalars, for any algebraically closed field $\mathbb{K}$, we obtain a weighted graded $\mathbb{K}$-algebra map $ \mathbb{K}^w[y_1,\dots, y_n]\rightarrow \mathbb{K}[x_1,\dots, x_n]$, which we also denote by $\nu^{\#}_n$. This induces a regular map $\nu_n:\mathbb{A}^n_{\mathbb{K}}\rightarrow \mathbb{A}^n_{\mathbb{K}}$, which we call \textit{Vieta's map}, since $\mathbf{a}:=(a_1,\dots, a_n)\mapsto (-e_1(\mathbf{a}), \dots, (-1)^ne_n(\mathbf{a}))$. By Vieta's formulae, it follows then $\nu_n(X^i_n(\mathbb{K}))= \Delta^i_n(\mathbb{K})$ for all $1\leq i\leq n-1$ and $\nu_n(\bigcap_{i=1}^{n-1}X^i_n(\mathbb{K}))= \bigcap_{i=1}^{n-1}\Delta^i_n(\mathbb{K})$. Furthermore, since $\nu^{\#}_{n}(y_n)=(-1)^nHD^0\mathbf{x}_n=(-1)^nx_1x_2\dots x_n$, it follows that
\begin{align*}
\nu_n(\bigcap_{i=1}^{n-1}X^i_n(\mathbb{K}))\cap \nu_n(V(x_1x_2\dots x_n))= \bigcap_{i=1}^{n-1}\Delta^i_n(\mathbb{K})\cap V(y_n)\\ 
\implies \nu_n(\bigcup_{j=1}^{n}V(x_j)\cap(\bigcap_{i=1}^{n-1}X^i_n(\mathbb{K})))= \bigcap_{i=1}^{n-1}\Delta^i_n(\mathbb{K})\cap V(y_n),
\end{align*}
where we again use Vieta's relations to obtain the second equality. By Lemma~\ref{Lem3}, we observe that $X^i_n(\mathbb{K})$ is symmetric (i.e., invariant under $\mathfrak{S}_n$-action on $\mathbb{A}^n_{\mathbb{K}}$) for all $1\leq i\leq n-1$. In particular, if $\tau_{jn}$ is the permutation which swaps the coordinates $x_j$ and $x_n$, then $\tau_{jn}(V(x_j)\cap(\bigcap_{i=1}^{n-1}X^i_n(\mathbb{K})))=V(x_n)\cap(\bigcap_{i=1}^{n-1}X^i_n(\mathbb{K}))$. Since $\nu_n\circ \sigma=\nu_n$ for all $\sigma\in\mathfrak{S}_n$, we conclude
\begin{equation}\label{Eqn:Vieta}
    \nu_n(V(x_n)\cap(\bigcap_{i=1}^{n-1}X^i_n(\mathbb{K})))= \bigcap_{i=1}^{n-1}\Delta^i_n(\mathbb{K})\cap V(y_n)=\widehat{X}_n(\mathbb{K})
\end{equation}
Furthermore, $\nu_n:\mathbb{A}_{\mathbb{K}}^n\rightarrow \mathbb{A}_{\mathbb{K}}^n$ interchanges the weighted and unweighted actions of $\mathbb{G}_m$ on $\mathbb{A}_{\mathbb{K}}^n$, i.e., the following diagram commutes:
\[\begin{tikzcd}
\mathbb{G}_m\times \mathbb{A}_{\mathbb{K}}^n  \arrow[rr, "\operatorname{id}\times \nu_n"] \arrow[dd] &  & \mathbb{G}^w_m\times \mathbb{A}_{\mathbb{K}}^n \arrow[dd] \\
                             &  &              \\
\mathbb{A}_{\mathbb{K}}^n \arrow[rr, "\nu_n"]            &  & \mathbb{A}_{\mathbb{K}}^n           
\end{tikzcd}\]
The left vertical arrow in the diagram is the usual scaling action of $\mathbb{G}_m$ whereas the right arrow is the weighted scaling action of $\mathbb{G}_m$ on $\mathbb{A}^n_{\mathbb{K}}$. Since $\nu_n^{-1}(\{\mathbf{0}\})=\{\mathbf{0}\}$, the above commutative diagram yields a regular map $\overline{\nu_n}:\mathbb{P}^{n-1}_{\mathbb{K}}\rightarrow \mathbb{P}_{\mathbb{K}}(1,2,\dots, n-1)$. Utilizing the coefficient isomorphism $\kappa_n:\mathbb{A}^n_{\mathbb{K}}(\mathbb{K})\rightarrow \mathbb{K}[X]_n$, we observe that the fibers of the Vieta map $\nu_n$ are precisely the orbits of the $\mathfrak{S}_n$-action on $\mathbb{A}^n_{\mathbb{K}}(\mathbb{K})$. In particular, this implies that $\overline{\nu}_n: \mathbb{P}^{n-1}_{\mathbb{K}}\rightarrow \mathbb{P}_{\mathbb{K}}(1,2,\dots, n-1)$ is a quasi-finite morphism. Thus, we obtain the following description of $X_n(\mathbb{K})$.

\begin{proposition}\label{Prop:Vieta}
 Let $\widehat{\mathcal{V}}_n(\mathbb{K}):= V(x_n)\cap(\bigcap_{i=1}^{n-1}X^i_n(\mathbb{K}))\subseteq \mathbb{A}^n_{\mathbb{K}}(\mathbb{K})$ and $\mathcal{V}_n(\mathbb{K}):=(\widehat{\mathcal{V}}_n(\mathbb{K})\setminus\{\mathbf{0}\})/\mathbb{G}_m\subseteq \mathbb{P}^{n-1}_{\mathbb{K}}(\mathbb{K})$. Then $X_n(\mathbb{K})=\overline{\nu}_n(\mathcal{V}_n(\mathbb{K}))$, where $\overline{\nu}_n: \mathbb{P}^{n-1}_{\mathbb{K}}\rightarrow \mathbb{P}_{\mathbb{K}}(1,2,\dots, n-1)$ is the induced Vieta map.
\end{proposition}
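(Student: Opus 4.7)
The plan is to reduce the claim to the set-level identity already established in \eqref{Eqn:Vieta}, and then pass through the commutative diagram that intertwines the usual scaling action of $\mathbb{G}_m$ on the source $\mathbb{A}^n_{\mathbb{K}}$ with the weighted action $\mathbb{G}_m^w$ on the target. Once the $\mathbb{G}_m$-invariance of $\widehat{\mathcal{V}}_n(\mathbb{K})$ is verified, the argument becomes essentially formal.

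First I would record that by \eqref{Eqn:Vieta} we already have $\nu_n(\widehat{\mathcal{V}}_n(\mathbb{K})) = \widehat{X}_n(\mathbb{K})$, and next that $\nu_n^{-1}(\{\mathbf{0}\}) = \{\mathbf{0}\}$: if $(-1)^i e_i(a_1,\ldots,a_n) = 0$ for all $1\leq i\leq n$, then $\prod_{i=1}^{n}(T - a_i) = T^n$, forcing each $a_i = 0$. Restricting to the complements of the origin therefore yields a surjection $\nu_n \colon \widehat{\mathcal{V}}_n(\mathbb{K}) \setminus \{\mathbf{0}\} \twoheadrightarrow \widehat{X}_n(\mathbb{K}) \setminus \{\mathbf{0}\}$.

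Second I would verify that $\widehat{\mathcal{V}}_n(\mathbb{K})$ and $\widehat{X}_n(\mathbb{K})$ are cones under the relevant actions. The target $\widehat{X}_n(\mathbb{K})$ is $\mathbb{G}_m^w$-invariant by construction, since the $\disc^i_n(y_1,\ldots,y_{n-1},0)$ are weighted homogeneous. For the source, I would combine the description $X^i_n(\mathbb{K}) = \bigcup_{j=1}^{n}(\mathfrak{p}^j_n)^{-1}(V^j_{n-1}(HD^{i-1}_{n-1}\mathbf{x}_{n-1}))$ from \eqref{Eqn5} with two observations: $\mathfrak{p}^j_n$ is linear and hence $\mathbb{G}_m$-equivariant for diagonal scaling, while $HD^{i-1}_{n-1}\mathbf{x}_{n-1} = e_{n-i}(x_1,\ldots,\widehat{x_j},\ldots,x_n)$ is homogeneous of degree $n-i$, so its zero locus is a cone. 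Hence each $X^i_n(\mathbb{K})$ and the hyperplane $V(x_n)$ are $\mathbb{G}_m$-stable under the usual scaling, and so is their intersection $\widehat{\mathcal{V}}_n(\mathbb{K})$.

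Finally I would invoke the commutative diagram displayed just before the proposition: because $\nu_n$ intertwines the two actions and is surjective on the punctured cones, it descends to a surjection $\overline{\nu}_n \colon \mathcal{V}_n(\mathbb{K}) \twoheadrightarrow X_n(\mathbb{K})$, which is precisely the claim. No step here looks genuinely difficult: the whole proof is a bookkeeping exercise that assembles \eqref{Eqn:Vieta}, the injectivity of $\nu_n$ at the origin, the homogeneity check for $X^i_n$, and the intertwining diagram; the only point requiring mild care is ensuring that the removal of the origin is compatible with the group action on both sides, which the above steps already handle.
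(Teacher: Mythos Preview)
Your proposal is correct and follows essentially the same approach as the paper: the paper's proof is a single line, ``Follows immediately from \eqref{Eqn:Vieta},'' relying on the surrounding discussion (the identity $\nu_n^{-1}(\{\mathbf{0}\})=\{\mathbf{0}\}$, the intertwining diagram, and the existence of $\overline{\nu}_n$) already established just before the proposition. You have simply unpacked these ingredients explicitly, including the $\mathbb{G}_m$-invariance of $\widehat{\mathcal{V}}_n(\mathbb{K})$, which the paper leaves implicit.
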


\begin{proof}
    Follows immediately from \eqref{Eqn:Vieta}.
\end{proof}

\section{Proof of the main results}\label{sec4}

In this section, we provide various equivalent formulations of Conjecture~\ref{con1} using the higher discriminant hypersurfaces defined in Section~\ref{sec3}. We also prove Theorems~\ref{mainthm2}, \ref{mainthm} and \ref{mainthm3}. As usual, we let $\mathbb{K}$ be algebraically closed. Unless otherwise specified, we let $\mathbb{K}$ have arbitrary characteristic.

\begin{proposition}\label{Prop4}
    Conjecture~\ref{con1} is true for all monic degree $n$ polynomials over $\mathbb{K}$ if and only if the intersection $\bigcap_{i=1}^{n-1}X^i_n(\mathbb{K})\subseteq \mathbb{A}^n_{\mathbb{K}}(\mathbb{K})$ has dimension $1$. 
\end{proposition}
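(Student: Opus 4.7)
The plan is to convert the hypothesis of Conjecture~\ref{con1} on a monic $f \in \mathbb{K}[X]_n$ into the containment of an affine line inside $\bigcap_{i=1}^{n-1} X^i_n(\mathbb{K})$ via equation \eqref{Eqn5}, and then to exploit scaling invariance to turn any non-trivial counterexample into a $2$-dimensional subvariety of the intersection. The starting point is Lemma~\ref{Lem4}: a tuple $(a_1, \ldots, a_n) \in \mathbb{A}^n_{\mathbb{K}}(\mathbb{K})$ lies in $\bigcap_{i=1}^{n-1} X^i_n(\mathbb{K})$ if and only if $f_a(X) := \prod_{i=1}^n(X + a_i)$ satisfies the hypothesis of Conjecture~\ref{con1}.

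The forward implication is then immediate. Assuming Conjecture~\ref{con1}, every such $f_a$ must equal $(X - \alpha)^n$ for some $\alpha \in \mathbb{K}$, forcing $a_1 = \cdots = a_n = -\alpha$; hence $\bigcap_{i=1}^{n-1} X^i_n(\mathbb{K})$ reduces to the diagonal line $\{(c, c, \ldots, c) : c \in \mathbb{K}\} \subseteq \mathbb{A}^n_{\mathbb{K}}(\mathbb{K})$, which has dimension exactly $1$.

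For the converse I would argue by contrapositive. Given a counterexample $f$, pick any ordering $\overline{\alpha} \in \mathfrak{R}(f)$ of its roots; by \eqref{Eqn5} the whole affine line $\mathscr{L}_{f, \overline{\alpha}}(\mathbb{K})$ lies in $\bigcap_{i=1}^{n-1} X^i_n(\mathbb{K})$, and this line is distinct from the diagonal since $f$ has at least two distinct roots. The critical extra ingredient is the $\mathbb{G}_m$-invariance of each $X^i_n(\mathbb{K})$ under the scaling action $(a_1, \ldots, a_n) \mapsto (\lambda a_1, \ldots, \lambda a_n)$. Reading this off Remark~\ref{Rem5new}: the shift projection $\mathfrak{p}_n$ is $\mathbb{G}_m$-equivariant and each $V^n_{n-1}(\Phi^{\#}_j(HD^{i-1}_{n-1} \mathbf{x}_{n-1}))$ is a cone (the transformed elementary symmetric polynomials remain homogeneous); equivalently, the Casas-Alvero condition is preserved under $f(X) \mapsto \lambda^n f(X/\lambda)$. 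Thus the full orbit $\mathbb{G}_m \cdot \mathscr{L}_{f, \overline{\alpha}}(\mathbb{K})$ sits inside $\bigcap_{i=1}^{n-1} X^i_n(\mathbb{K})$, and I would show that it has dimension $2$, contradicting $\dim \bigcap_{i=1}^{n-1} X^i_n(\mathbb{K}) = 1$.

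The only substantive step is this dimension count, which I expect to be the main (though elementary) obstacle. Parameterizing the orbit by $(\beta, \lambda) \in \mathbb{K} \times \mathbb{K}^{\times} \mapsto (\lambda\beta, \lambda\beta + \lambda\gamma_{12}, \ldots, \lambda\beta + \lambda\gamma_{1n})$ with $\gamma_{1j} = \alpha_j - \alpha_1$, the assumption $f \neq (X - \alpha_1)^n$ forces some $\gamma_{1j} \neq 0$; one then recovers $\lambda = (y_j - y_1)/\gamma_{1j}$ from a generic image point and $\beta = y_1/\lambda$ afterwards, making the parameterization generically injective and the image $2$-dimensional. Everything else is a direct unwinding of the dictionary between polynomials and their root tuples built up in Section~\ref{sec3}.
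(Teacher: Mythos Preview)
Your proposal is correct and follows essentially the same approach as the paper: both reduce the statement to showing that a counterexample to Conjecture~\ref{con1} forces a $2$-dimensional linear subspace (the shift-and-scale orbit of a non-diagonal root tuple) inside $\bigcap_{i=1}^{n-1}X^i_n(\mathbb{K})$. The only cosmetic difference is that the paper invokes \cite[Lemma~2]{DJ} for the affine invariance $(\alpha_1,\dots,\alpha_n)\mapsto \lambda(\alpha_1,\dots,\alpha_n)+\alpha(1,\dots,1)$, whereas you re-derive scaling invariance from the homogeneity of the defining polynomials via Remark~\ref{Rem5new}; your explicit injectivity check of the parameterization is the analogue of the paper's one-line observation that the resulting linear subspace is $2$-dimensional.
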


\begin{proof}
    Note that a monic degree $n$ polynomial $f(X)\in \mathbb{K}[X]_n$ satisfies the hypothesis of Conjecture~\ref{con1} if and only if $f(X)\in \bigcap_{i=1}^{n-1}\mathfrak{X}^i_n(\mathbb{K})$. Thus, Conjecture~\ref{con1} is equivalent to the equality $\bigcap_{i=1}^{n-1}\mathfrak{X}^i_n(\mathbb{K})=\{(X-\alpha)^n\mid \ \alpha\in \mathbb{K}\}$ which, by Lemma~\ref{Lem4}, is equivalent to $\bigcap_{i=1}^{n-1}X^i_n(\mathbb{K})=\Delta_n(\mathbb{K}):=\{(\alpha, \alpha, \dots, \alpha)\in \mathbb{A}^n_{\mathbb{K}}(\mathbb{K}) \mid \ \alpha\in \mathbb{K}\}$. Thus, it suffices to prove that $\dim \bigcap_{i=1}^{n-1}X^i_n(\mathbb{K}) =1$ is equivalent to $\bigcap_{i=1}^{n-1}X^i_n(\mathbb{K})= \Delta_n(\mathbb{K})$. We prove the forward implication as the reverse direction is obvious. Assume $\dim \bigcap_{i=1}^{n-1}X^i_n(\mathbb{K}) =1$, and let $(\alpha_1, \dots, \alpha_n)\in \bigcap_{i=1}^{n-1}X^i_n(\mathbb{K})\setminus \Delta_n(\mathbb{K})$ for the sake of contradiction. By the definition of $X^i_n(\mathbb{K})$, we have $f(X):=\prod_{i=1}^{n}(X-\alpha_i)\in \bigcap_{i=1}^{n-1}\mathfrak{X}^i_n(\mathbb{K})$. Furthermore, by \cite{DJ}*{Lemma 2}, we see that $\prod_{i=1}^{n}(X-(\lambda\alpha_i+\alpha))\in\bigcap_{i=1}^{n-1}\mathfrak{X}^i_n(\mathbb{K})$ for all $\lambda, \alpha\in \mathbb{K}$, whence $$\lambda(\alpha_1, \alpha_2, \dots, \alpha_n)+\alpha(1,1,\dots, 1)\in \bigcap_{i=1}^{n-1}X^i_n(\mathbb{K}) \ \forall \lambda, \alpha\in \mathbb{K}.$$ This forms a $2$-dimensional linear subvariety of $\bigcap_{i=1}^{n-1}X^i_n(\mathbb{K})$ by choice of $(\alpha_1, \dots, \alpha_n)$, which contradicts the assumption $\dim \bigcap_{i=1}^{n-1}X^i_n(\mathbb{K}) =1$. This completes the proof.  
    \end{proof}

  The higher discriminant hypersurfaces in $\mathbb{A}^n_{\mathbb{K}}(\mathbb{K})$ are determined by $V(\Phi^{\#}_j(HD^{i-1}_{n-1}\mathbf{x}_{n-1}))\subseteq \mathbb{A}^{n-1}_{\mathbb{K}}(\mathbb{K})$ for all $1\leq j\leq n$ and $1\leq i\leq n-1$, by \eqref{Eqn:shift}. Thus, Proposition~\ref{Prop4} has a purely commutative algebraic analogue in terms of the polynomials $\Phi^{\#}_j(HD^{i-1}_{n-1}\mathbf{x}_{n-1})\in \mathbb{K}[x_1,\dots, x_{n-1}]$.   

  \begin{proposition}\label{Lem5}
    Conjecture~\ref{con1} is true for all monic degree $n$ polynomials over $\mathbb{K}$ if and only if for all choices of $1\leq j_1, \dots, j_{n-1}\leq n$, the sequence $\Phi^{\#}_{j_1}(HD^{0}_{n-1}\mathbf{x}_{n-1}), \dots, \Phi^{\#}_{j_{n-1}}(HD^{n-2}_{n-1}\mathbf{x}_{n-1})$ forms a regular sequence of homogeneous polynomials in $\mathbb{K}[x_1,\dots, x_{n-1}]$. 
\end{proposition}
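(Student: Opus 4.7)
The plan is to reformulate Proposition~\ref{Prop4} on the target of the shift projection $\mathfrak{p}_n$, distribute the resulting intersection over a union, and conclude via the Cohen--Macaulay property of polynomial rings.

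First, by Proposition~\ref{Prop4}, Conjecture~\ref{con1} in degree $n$ over $\mathbb{K}$ is equivalent to the set-theoretic equality $\bigcap_{i=1}^{n-1}X^i_n(\mathbb{K})=\Delta_n(\mathbb{K})$. A direct computation shows $\mathfrak{p}_n^{-1}(\mathbf{0})=\Delta_n(\mathbb{K})$, and by \eqref{Eqn:shift} each $X^i_n(\mathbb{K})$ is already a full $\mathfrak{p}_n$-preimage; hence this equality is in turn equivalent to
\[
\bigcap_{i=1}^{n-1}\bigcup_{j=1}^{n} V^n_{n-1}\bigl(\Phi^{\#}_j(HD^{i-1}_{n-1}\mathbf{x}_{n-1})\bigr)\;=\;\{\mathbf{0}\}\qquad\text{inside }Z^n_n(\mathbb{K})\cong \mathbb{A}^{n-1}_{\mathbb{K}}(\mathbb{K}).
\]

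Next, I would distribute the intersection over the union to rewrite the left-hand side as
\[
\bigcup_{1\leq j_1,\dots,j_{n-1}\leq n}\;\bigcap_{i=1}^{n-1} V^n_{n-1}\bigl(\Phi^{\#}_{j_i}(HD^{i-1}_{n-1}\mathbf{x}_{n-1})\bigr).
\]
Because each $\Phi^{\#}_j$ is a graded $\mathbb{K}$-algebra automorphism by \eqref{EqnMor}, every $\Phi^{\#}_{j_i}(HD^{i-1}_{n-1}\mathbf{x}_{n-1})$ is a homogeneous polynomial of positive degree $n-i$, so every piece in the finite union contains $\mathbf{0}$. Consequently, the union equals $\{\mathbf{0}\}$ if and only if each individual piece does, i.e., for every tuple $1\leq j_1,\dots, j_{n-1}\leq n$, the common zero locus of the $n-1$ homogeneous polynomials $\Phi^{\#}_{j_1}(HD^{0}_{n-1}\mathbf{x}_{n-1}),\dots,\Phi^{\#}_{j_{n-1}}(HD^{n-2}_{n-1}\mathbf{x}_{n-1})$ in $\mathbb{A}^{n-1}_{\mathbb{K}}(\mathbb{K})$ equals the origin set-theoretically.

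Finally, set $S=\mathbb{K}[x_1,\dots,x_{n-1}]$ with irrelevant ideal $\mathfrak{m}$. The set-theoretic condition above is equivalent, via the Hilbert Nullstellensatz, to the homogeneous ideal $\mathfrak{a}(j_1,\dots,j_{n-1})$ generated by these $n-1$ polynomials being $\mathfrak{m}$-primary, which in turn is equivalent to the generators forming a homogeneous system of parameters of $S$. Since $S$ is a Cohen--Macaulay graded ring of Krull dimension $n-1$, a homogeneous system of parameters of length $n-1$ is automatically a regular sequence, and conversely every regular sequence of length equal to the Krull dimension is a system of parameters. This yields the desired equivalence. The only step with genuine content is the Cohen--Macaulay principle at the end; the rest is a careful unwinding of \eqref{Eqn:shift} and Proposition~\ref{Prop4}, so no serious obstacle is expected beyond checking that the $\Phi^{\#}_j$ preserve grading (immediate from \eqref{EqnMor}).
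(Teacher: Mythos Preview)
Your argument is correct and follows essentially the same route as the paper: both reduce via Proposition~\ref{Prop4} and \eqref{Eqn:shift} to the condition that each $\bigcap_{i=1}^{n-1}V^n_{n-1}(\Phi^{\#}_{j_i}(HD^{i-1}_{n-1}\mathbf{x}_{n-1}))$ is the origin, and then invoke the standard equivalence between this and regularity of a length-$(n-1)$ sequence of homogeneous polynomials in $n-1$ variables. Your Cohen--Macaulay/system-of-parameters phrasing is just a more explicit version of the fact the paper cites in one line.
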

\begin{proof}
 By \eqref{Eqn:shift}, we can write
 \begin{equation}\label{Eqn8}
        \bigcap_{i=1}^{n-1}X^i_n(\mathbb{K})= \bigcup_{1\leq j_1,\dots, j_{n-1}\leq n}\mathfrak{p}_n^{-1}(\bigcap_{i=1}^{n-1}V^n_{n-1}(\Phi^{\#}_{j_i}(HD^{i-1}_{n-1}\mathbf{x}_{n-1}))).
    \end{equation} 
    We note that for any $1\leq j_i\leq n$, the polynomial $\Phi^{\#}_{j_i}(HD^{i-1}_{n-1}\mathbf{x}_{n-1})\in \mathbb{K}[x_1,\dots, x_{n-1}]$ is homogeneous of degree $n-i$ for all $1\leq i\leq n-1$. Proposition~\ref{Prop4} implies that Conjecture~\ref{con1} is true in degree $n$ if and only if $\bigcap_{i=1}^{n-1}V^n_{n-1}(\Phi^{\#}_{j_i}(HD^{i-1}_{n-1}\mathbf{x}_{n-1}))=\{(0,0,\dots, 0)\}\subseteq \mathbb{A}^{n-1}_{\mathbb{K}}(\mathbb{K})$ for any choice of $1\leq j_1, \dots, j_{n-1}\leq n$. The proposition follows from the fact that for any $r\geq 1$, a sequence of $r$ homogeneous polynomials in $r$ variables is regular if and only if the reduced affine variety defined by them is the origin in $\mathbb{A}^r_{\mathbb{K}}$.
\end{proof}

\subsection{Proofs of Theorem~\ref{mainthm2} and Theorem~\ref{mainthm}}\label{subsec4.1} We will obtain Theorem~\ref{mainthm} as an immediate corollary of Theorem~\ref{mainthm2}. We first prove Proposition~\ref{MainProp}, which is the main commutative algebraic result implying Theorem~\ref{mainthm2}. To prove Proposition~\ref{MainProp}, we need the following setup.\\

 For any $1\leq j\leq n-1$, let $\Phi^{\#}[T]_j:\mathbb{K}[x_1,\dots, x_{n-1}]\rightarrow \mathbb{K}[x_1,\dots, x_{n-1}, T]$ be the $\mathbb{K}$-algebra homomorphism induced by $\Phi^{\#}[T]_j(x_i)=x_i-Tx_j$ for all $i\neq j$ and $\Phi^{\#}[T]_j(x_j)=(1-2T)x_j$. For $j=n$, let $\Phi^{\#}[T]_n:\mathbb{K}[x_1,\dots, x_{n-1}]\rightarrow \mathbb{K}[x_1,\dots, x_{n-1}, T]$ be the natural inclusion of $\mathbb{K}$-algebras. Our main technical result is the following.

 \begin{proposition}\label{MainProp}
      $\Phi^{\#}[T]_{j_1}(HD^{0}_{n-1}\mathbf{x}_{n-1}), \dots,\ \Phi^{\#}[T]_{j_{n-1}}(HD^{n-2}_{n-1}\mathbf{x}_{n-1})$ forms a regular sequence in $\mathbb{K}[x_1,\dots, x_{n-1}, T, \frac{1}{1-2T}]$ for any choice of indices $1\leq j_1,\dots, j_{n-1}\leq n$. Here $\mathbb{K}$ is any algebraically closed field.
 \end{proposition}

 \begin{proof}
     Fix a choice of indices $1\leq j_1,\dots, j_{n-1}\leq n$, and for brevity, use the notation $H_l:= \Phi^{\#}[T]_{j_l}(HD^{l-1}_{n-1}\mathbf{x}_{n-1})$, for all $1\leq l\leq n-1$. Furthermore, for this proof, let $R:=\mathbb{K}[x_1,\dots, x_{n-1}, T]$. We will prove the proposition by induction. Clearly $H_1$ is regular in $R[\frac{1}{1-2T}]$. Assume for some $1< i<n-1$, $H_1,\dots, H_{i-1}$ form a regular sequence in $R[\frac{1}{1-2T}]$. Then it suffices to show that $H_i$ is a non-zero divisor in $R[\frac{1}{1-2T}]/(H_1,\dots, H_{i-1})$. Now note that for any $1\leq j\leq n$, the homomorphisms $\phi^{\#}[T]_j$ can be extended to an endomorphism of $R$ by defining $\phi^{\#}[T]_j(T)=T$. In fact, these can also be naturally extended to endomorphisms of $R[\frac{1}{1-2T}]$. Then, we note that for all $1\leq j\leq n$, $\phi^{\#}[T]_j: R[\frac{1}{1-2T}]\rightarrow R[\frac{1}{1-2T}]$ are in fact $\mathbb{K}$-algebra automorphisms. This is obvious if $j=n$. For $j<n$, we see that $\phi^{\#}[T]_j^{-1}$ is defined by:
     \[\phi^{\#}[T]_j^{-1}: \begin{cases}
          x_i\mapsto x_i+\frac{T}{1-2T}x_j &\text{ if } i\neq j\\
          x_j \mapsto \frac{x_j}{1-2T} \\
          T\mapsto T
     \end{cases}\]
     Thus, $H_1,\dots, H_{i-1}$ is a regular sequence in $R[\frac{1}{1-2T}]$ if and only if $\phi^{\#}[T]_{j_i}^{-1}(H_1),\dots, \phi^{\#}[T]_{j_i}^{-1}(H_{i-1})$ is a regular sequence in $R[\frac{1}{1-2T}]$, where we are using the inverse of the automorphism used in defining $H_i=\Phi^{\#}[T]_{j_i}(HD^{i-1}_{n-1}\mathbf{x}_{n-1})$. If $j_i\neq n$, then for all $1\leq l\leq i-1$ define 
     \begin{equation}\label{DefGl}
     G_l=\begin{cases}
         (1-2T)^{n-l}\phi^{\#}[T]_{j_i}^{-1}(H_l)&\text{if $j_l\neq j_i$}\\
         h_l:=HD^{l-1}_{n-1}\mathbf{x}_{n-1}= \phi^{\#}[T]_{j_i}^{-1}(H_l)&\text{if $j_l=j_i$}
    \end{cases} 
    \end{equation}
     Note that if $j_i=n$, then $\phi^{\#}[T]_{j_i}^{-1}=\phi^{\#}[T]_{j_i}=\operatorname{id}$. So if $j_i=n$, let $G_l=H_l$ for all $1\leq l\leq i-1$. Consequently, for any $1\leq j_i\leq n$, we see that $G_l\in R$ for all $1\leq l\leq i-1$ and $H_1,\dots, H_{i-1}$ forms a regular sequence in $R[\frac{1}{1-2T}]$ if and only if $G_1,\dots, G_{i-1}$ does. The upshot of applying the automorphism $\Phi^{\#}[T]_{j_i}^{-1}$ to the sequence $H_1,\dots, H_{i-1}$ is that now it suffices to prove that $h_i:= HD^{i-1}_{n-1}\mathbf{x}_{n-1}=\Phi^{\#}[T]_{j_i}^{-1}(H_i)$ is a non-zero divisor in $R[\frac{1}{1-2T}]/(G_1,\dots, G_{i-1})$.

     Furthermore, since $G_l \mod T = h_l:= HD^{l-1}_{n-1}\mathbf{x}_{n-1}$ for all $1\leq l\leq n-1$, it follows that $G_1,\dots, G_{i-1}, h_i$ forms a regular sequence in $R[\frac{1}{1-2T}]/(T)$. Thus, no minimal prime of the ideal $(G_1,\dots, G_{i-1})\subseteq R[\frac{1}{1-2T}]$ can contain both $h_i$ and $T$. Hence, it suffices to prove that $h_i$ is a non-zero divisor in $R[\frac{1}{1-2T}, \frac{1}{T}]/(G_1,\dots, G_{i-1})$. Furthermore, since $R$ is a domain, localization sends non-zero divisors to non-zero divisors, whence it suffices to prove that $h_i$ is a non-zero divisor in $R[\frac{1}{T}]/(G_1,\dots, G_{i-1})$. Now let $\Theta: R[\frac{1}{T}]\rightarrow R[\frac{1}{T}]$ be the $\mathbb{K}$-algebra automorphism defined by fixing the $x_i$'s but swapping $T$ and $1/T$. Then it suffices to prove that $h_i=\Theta(h_i)$ is a non-zero divisor in $R[\frac{1}{T}]/(\Theta(G_1),\dots, \Theta(G_{i-1}))$. If $j_i\neq n$, then for all $1\leq l\leq i-1$, define 
     \[
     F_l=\begin{cases}
         T^{\deg_T G_l}\Theta(G_l) &\text{if $H_l\neq h_l$}\\
         T^{2\deg_T G_l}\Theta(G_l) &\text{if $H_l=h_l$ and $l\neq 1$}\\
         T^{2\deg_T G_1+2}\Theta(G_1) &\text{if $H_1=h_1$}
         
     \end{cases}
     \]
     If $j_i=n$, then define $F_l=T^{\deg_T G_l}\Theta(G_l)$ for all $1\leq l\leq i-1$. Then  $F_l\in R$ for all $1\leq l\leq i-1$ and it suffices to prove $h_i$ is  non-zero divisor in $R[\frac{1}{T}]/(F_1,\dots, F_{i-1})$. Furthermore, if $j_i\neq n$, then either $F_l=h_l$ or $\deg_T(F_l)=2(n-l)$ and if $j_i=n$, then either $F_l=h_l$ or $\deg_T(F_l)=n-l$. Again, since $R$ is a domain, it suffices for us to prove that $h_i$ is a non-zero divisor in $R/(F_1,\dots, F_{i-1})$. To prove this, we will need the following lemma.

     \begin{lemma}\label{lemreduction}
         Let $\prec_T$ be the monomial partial ordering on $R=\mathbb{K}[x_1,\dots, x_{n-1}, T]$, defined by $\mathbf{x}^{\mathbf{a}}T^{b}\prec_T \mathbf{x}^{\mathbf{a}'}T^{b'}$ if and only if $b\leq b'$. For any $f\in R$, let $\dom(f)$ be the sum of the maximal monomials in $f$ under the partial order $\prec_T$. Then for any non-zero $R$-linear combination of the form $\sum_{j\in S}c_jF_j$ for any subset $S\subseteq \{1,\dots, i-1\}$, there exist $\tilde{c}_j\in R$ such that $\sum_{j\in S}c_jF_j=\sum_{j\in S}\tilde{c}_jF_j$ and $\dom(\sum_{j\in S}c_jF_j)=\dom(\sum_{j\in S}\dom(\tilde{c}_j)\dom(F_j))$.
     \end{lemma}

     \begin{proof}[Proof of Lemma]
     Let $n_j:=\deg_TF_j$. Then note that $\dom(F_j)=T^{n_j}h_j$ for all $1\leq j\leq i-1$. Let $c_j=\sum_{p=0}^{m_j}c_{j,p}T^p$ and $F_j=\sum_{q=1}^{n_j}F_{j,q}T^q$, where $F_{j,n_j}=h_j$. By convention, for $p<0$ or $p>m_j$, we set $c_{j,p}=0$ and similarly, for $q<0$ or $q>n_j$, set $F_{j,q}=0$. Then if $\sum_{j\in S}c_{j,m_j}h_jT^{n_j+m_j}\neq 0$, then clearly $\dom(\sum_{j\in S}c_jF_j)=\dom(\sum_{j\in S}\dom(c_j)\dom_1(F_j))$. Else, suppose $\sum_{j\in S}c_{j,m_j}h_jT^{n_j+m_j}=0$. Now partition the set $S$ of indices $j$ as $S=\bigsqcup_{d}S_d$, based on the value of $n_j+m_j$, i.e., let $S_1$ be those $j\in S$, for which $n_j+m_j$ is maximum, $S_2$ be those $j\in S\setminus S_1$, for which $n_j+m_j$ is maximum in $S\setminus S_1$, and so on. In particular, for $j\in S_d$ for a fixed $d$, the value $n_j+m_j$ is constant. Then we see $\sum_{j\in S_d}c_{j,m_j}h_jT^{n_j+m_j}= 0$ for each $d$. Since $h_j$ form a regular sequence in $R$ for $j\in S_d$ for any $d$, we therefore obtain:
     \begin{align}
         \text{For $j\in S_d$: } \ c_{j,m_j}T^{m_j+n_j}=\sum_{l\in S_d}r^1_{jl}h_l; \ \text{ such that $r^1_{jl}=-r^1_{lj}$, $\forall j,l\in S_d$.}
     \end{align}
     Furthermore, $T^{m_j+n_j}\mid r^1_{jl}$ for all $l\in S_d$ if $j\in S_d$, whereby $c_{j,m_j}=\sum_{l\in S_d}q^1_{jl}h_l$, by letting $q^1_{jl}=r^1{jl}/T^{n_j+m_j}$. Then:
     \begin{align}
         \sum_{j\in S_d}(c_{j,m_{j}-1}h_j+c_{j,m_j}F_{j,n_{j}-1})T^{m_j+n_j-1}=\sum_{j\in S_d}(c_{j,m_j-1}+\sum_{l\in S_d}q^1_{lj}F_{l,n_l-1})h_jT^{m_j+n_j-1}
     \end{align}
     Thus, letting $\tilde{c}_{j,m_j-1}:=c_{j,m_j-1}+\sum_{l\in S_d}q^1_{lj}F_{l,n_l-1}$, we see 
     \[\sum_{j\in S_d}(c_{j,m_{j}-1}h_j+c_{j,m_j}F_{j,n_{j}-1})T^{m_j+n_j-1}=\sum_{j\in S_d}\tilde{c}_{j,m_{j}-1}h_jT^{m_j+n_j-1}.\]
     In general, let $\tilde{c}_{j,m_j-k}:=c_{j,m_j-k}+\sum_{l\in S_d}q^1_{lj}F_{l,n_l-k}$ for all $1\leq k\leq m_j$. Then using the fact $q^1_{jl}=-q^1_{lj}$, $\forall j,l\in S_d$, one can check for all $0\leq l\leq m_j+n_j$:
     \[\sum_{j\in S_d}(\sum_{k=0}^{l}c_{j,m_j-k}F_{j,n_j-l+k})T^{m_j+n_j-l}=\sum_{j\in S_d}(\sum_{k=1}^{l}\tilde{c}_{j,m_j-k}F_{j,n_j-l+k})T^{m_j+n_j-l},\]
     when $\sum_{j\in S_d}c_{j,m_j}h_jT^{n_j+m_j}= 0$. Doing this for each $S_d$ in the partition of $S$, we can define $\tilde{c_j}=\sum_{p=0}^{m_j-1}\tilde{c}_{j,p}T^p$ for each $j\in S$ to obtain $\sum_{j\in S}c_jF_j=\sum_{j\in S}\tilde{c}_jF_j$. Then starting with $\sum_{j\in S}\tilde{c}_jF_j$ instead of $\sum_{j\in S}c_jF_j$ (in particular, $\tilde{c}_j$'s instead of $c_j$'s), we repeat the above process. Clearly this process must terminate after finitely many steps since at each step we are strictly reducing the $T$-degree of $c_j$'s. When this process terminates, we obtain $\dom(\sum_{j\in S}c_jF_j)=\dom(\sum_{j\in S}\dom(c_j)\dom(F_j))$. This proves the lemma.
    \end{proof}

    Now we return to our goal of proving that $h_i$ is a non-zero divisor in $R/(F_1, \dots, F_{i-1})$. For this, suppose given the following equation in $R$:
    \begin{equation}\label{Eqnon0}
        c_ih_i=\sum_{j=1}^{i-1}c_jF_j,
    \end{equation}
    we have to show $c_i\in (F_1,\dots, F_{i-1})\subseteq R$. Applying Lemma~\ref{lemreduction}, we can assume $\dom(\sum_{j=1}^{i-1}c_jF_j)=\dom(\sum_{j=1}^{i-1}\dom(c_j)\dom(F_j))$. Then taking $\dom$ of \eqref{Eqnon0}, we have:
    \begin{equation}\label{Eqnon0dom}
        \dom(c_i)h_i=\dom(\sum_{j=1}^{i-1}\dom(c_j)\dom(F_j))
    \end{equation}
    Let $Z\subseteq \{1,2,\dots, i-1\}$ be the subset of indices $l$ for which $F_l=h_l$, i.e., $n_l=\deg_T(F_l)=0$. Recall that for $l\notin Z$, $\dom(F_l)=T^{n_l}h_l$, where either $n_l=\deg_T(F_l)=2(n-l)$ for all $l\notin Z$ or $n_l=\deg_T(F_l)=n-l$ for all $l\notin Z$ (depending on whether $j_i$ equals or not equals $n$). In particular, for $l\notin Z$, we can arrange the degrees $n_l$ in strictly decreasing order, i.e., let $\{1,2,\dots, i-1\}\setminus Z=\{u_1, \ u_2, \ \dots, \ u_k\}$ (where $k=i-1-|Z|$) such that $n_{u_1}>n_{u_2}>\dots>n_{u_k}>0$. Let $m_i=\deg_T(c_i)$. Then we can rewrite \eqref{Eqnon0dom} as:
    \begin{equation}\label{Eqnon0dom2}
        \dom(c_i)h_i=\dom(\sum_{j=1}^{i-1}\dom(c_j)T^{n_j}h_j),
    \end{equation}
    where $\deg_T(\dom(c_i))=m_i$. Since $h_1,\dots, h_i$ form a regular sequence in $R$, \eqref{Eqnon0dom2} implies that $\dom(c_i)=\sum_{j=1}^{i-1}b^1_jh_j$, for $b^1_j\in R$ such that $T^{m_i}\mid b^1_j$ for all $1\leq j\leq i-1$. \textbf{Now as long as $\mathbf{m_i\geq n_{u_1}}$}, let $c_i':=c_i-\sum_{j=1}^{i-1}\frac{b^1_j}{T^{n_j}}F_j$. Then either $c_i'=0$, in which case we are done, else $c_i'\neq 0$ and 
    \[c_i'h_i=\sum_{j=1}^{i-1}(c_j-\frac{b^1_jh_i}{T^{n_j}})F_j,\]
    which is an equation in $R$ of the form \eqref{Eqnon0}, but with $\dom(c_i')<\dom(c_i)$. Iterating this process, we either reach $c_i\in (F_1,\dots, F_{i-1})\subseteq R$, in which case we are done, or $n_{u_2}\leq \deg_T(c_i)\leq n_{u_1}-1$. Then taking $\dom$ of the new \eqref{Eqnon0}, we obtain \eqref{Eqnon0dom2}, but with $\deg_T(\dom(c_i))=m_i\leq n_{u_1}-1$. Then from \eqref{Eqnon0dom2}, we see that $\dom(c_{u_1})h_{u_1}T^{n_{u_1}}$ gets cancelled, i.e., either $\dom(c_{u_1})=0$ or there exists a subset $S\subseteq \{1,2,\dots, i-1\}$ such that $u_1\in S$ and $\sum_{j\in S}\dom(c_j)h_jT^{n_j}=0$. Then applying Lemma~\ref{lemreduction} to this subset $S$, we can reduce $\dom(c_{u_1})$. Since $m_i<n_{u_1}$, we can iterate this process, until $\deg_T(\dom(c_{u_1}))=0$ or equivalently $c_{u_1}\in \mathbb{K}[x_1,\dots, x_{n-1}]$. Then we have
    \begin{align}
        &c_{u_1}h_{u_1}T^{n_{u_1}}+\sum_{j\in S\setminus\{u_1\}}\dom(c_j)h_jT^{n_j}=0 \implies c_{u_1}T^{n_{u_1}}=-\sum_{j\in S\setminus\{u_1\}}e_jh_j, 
    \end{align}
    for some $e_j\in R$ such that $e_j=e_j'T^{n_{u_1}}$ for $e_j'\in \mathbb{K}[x_1,\dots, x_{n-1}]$. This is because $\{h_j\}_{j\in S}$ forms a regular sequence in $R$ and $c_{u_1}\in \mathbb{K}[x_1,\dots, x_{n-1}]$. Then 
    \[c_{u_1}h_{u_1}T^{n_{u_1}}+\sum_{j\in S\setminus\{u_1\}}\dom(c_j)h_jT^{n_j}=\sum_{j\in S\setminus\{u_1\}}(\dom(c_j)-e'_jh_{u_1}T^{n_{u_1}-n_j})h_jT^{n_j} \]
    So \eqref{Eqnon0dom2} becomes
    \begin{align}
        &\dom(c_i)h_i=\dom(\sum_{j\in S\setminus\{u_1\}}(\dom(c_j)-e'_jh_{u_1}T^{n_{u_1}-n_j})T^{n_j}h_j+\sum_{j\notin S}\dom(c_j)T^{n_j}h_j)\\
        &\implies \dom(c_i)=\sum_{\substack{j=1\\ j\neq u_1}}^{i-1}b^2_jh_j,
    \end{align}
    for $b^2_j\in R$ such that $T^{m_i}\mid b^2_j$ for all $j\neq u_1$, since $\{h_j\mid \ 1\leq j\leq i-1 \ \text{and} \ j\neq u_1\}$ is a regular sequence in $R$. \textbf{Then as long as $\mathbf{m_i\geq n_{u_2}}$}, letting $c_i':=c_i-\sum_{j\neq u_1}\frac{b^2_j}{T^{n_j}}F_j$ we can repeat the above process. This same process can be iterated for all $n_{u_l}\leq m_i\leq m_{u_{l-1}-1}$ (for $2\leq l\leq k$) and finally for $m_i\geq n_{u_k}$, till we either have $c_i\in (F_1,\dots, F_{i-1})\subseteq R$ or obtain \eqref{Eqnon0}, i.e., 
    \[c_ih_i=\sum_{j\in Z}c_jF_j+\sum_{j\notin Z} c_jF_j,\]
      with $0<m_i=\deg_T(c_i)\leq n_{u_k}-1$. Then by Lemma~\ref{lemreduction}, there exist $\tilde{c}_j$ for all $1\leq j\leq i-1$, such that $c_ih_i=\sum_{j\in Z}\tilde{c}_jF_j+\sum_{j\notin Z} \tilde{c}_jF_j$ and $\dom(c_i)h_i=\dom(\sum_{j\in Z}\dom(\tilde{c}_j)h_j+\sum_{j\notin Z} \dom(\tilde{c}_j)T^{n_j}h_j)$. Then since $n_j>m_i$ for all $j\notin Z$, there exists a subset $S\subseteq \{1,2\dots, i-1\}$ such that $\{1,2,\dots, i-1\}\setminus Z\subseteq S$ and $\sum_{j\in S}\dom(\tilde{c}_j)\dom(F_j)=0$. Then applying Lemma~\ref{lemreduction} to this subset $S$, we can reduce $\deg_T(\dom(\tilde{c_j}))$ for all $j\notin Z$. We can iterate this until we reach $\deg_T(\dom(\tilde{c}_j))=0$ for all $j\notin Z$, since $m_i<n_j$ for all $j\notin Z$. Thus, we are reduced to the equation 
      \begin{equation}\label{Eqnon02}
      c_ih_i=\sum_{j\in Z}c_jF_j+\sum_{j\notin Z} c_jF_j,
      \end{equation}
      where $\deg_T(c_i)=m_i<n_{u_k}$ and $c_j\in \mathbb{K}[x_1,\dots, x_{n-1}]$ for all $j\notin Z$. Now we have two cases.\\

      \textbf{Case I:}($Z=\emptyset$) If $Z=\emptyset$, then we have the equation $c_ih_i=\sum_{j=1}^{i-1}c_jF_j$, where $c_j\in \mathbb{K}[x_1,\dots, x_{n-1}]$. Furthermore, we have either of the following:
      \begin{enumerate}
          \item when $j_i\neq n$: $\deg_T(F_j)=2(n-j)$ for all $1\leq j\leq i-1$ and $\deg_T(c_i)<2(n-i+1)=\min_{1\leq j\leq i-1} \deg_T(F_j)$.
          \item when $j_i= n$: $\deg_T(F_j)=n-j$ for all $1\leq j\leq i-1$ and $\deg_T(c_i)< n-i+1=\min_{1\leq j\leq i-1} \deg_T(F_j)$.
      \end{enumerate}
      In any of the above cases $(1)$ or $(2)$, coefficient of $T^{\deg_T(F_1)}$ in $c_ih_i$ is $0$, while that in $\sum_{j=1}^{i-1}c_jF_j$ is $c_1h_1$. Thus, we must have $c_1=0$. Now we are reduced to $c_ih_i=\sum_{j=2}^{i-1}c_jF_j$, but then comparing the coefficients of $T^{\deg_T(F_2)}$ on either side, we see $c_2=0$. Repeating this process, we see $c_j=0$ for all $1\leq j\leq i-1$. Thus, we must have $c_i=0$, whence we are done by virtue of the previous reduction processes.

      \textbf{Case II:} ($Z\neq\emptyset$) Since $F_j=h_j$ for $j\in Z$, we can rewrite \eqref{Eqnon02} as $c_ih_i-\sum_{j\in Z}c_jh_j=\sum_{j\notin Z}c_jF_j$. Now we take $\dom$ of this equation and note that since $h_j\in \mathbb{K}[x_1,\dots, x_{n-1}]$, we have $\dom (c_ih_i-\sum_{j\in Z}c_jh_j)=\dom(\dom(c_i)h_i-\sum_{j\in Z}\dom(c_j)h_j)$. Furthermore, since $c_j\in \mathbb{K}[x_1,\dots, x_{n-1}]$ and $\deg_T(F_j)$ are all distinct for $j\notin Z$, with $\max_{j\notin Z} \deg_T(F_j)=\deg_T(F_{u_1})$, we see that $\dom(\sum_{j\notin Z}c_jF_j)= c_{u_1}\dom(F_{u_1})=c_{u_1}T^{n_{u_1}}h_{u_1}$. So we obtain:
      \begin{equation}
          \dom(\dom(c_i)h_i-\sum_{j\in Z}\dom(c_j)h_j)=c_{u_1}T^{n_{u_1}}h_{u_1}
      \end{equation}
      Since $\deg_T(\dom(c_i))<n_{u_k}< n_{u_1}$, we must have 
      \begin{equation}
          \dom(-\sum_{j\in Z}\dom(c_j)h_j)=c_{u_1}T^{n_{u_1}}h_{u_1}
      \end{equation}
      Then like before, we see that $c_{u_1}T^{n_{u_1}}=\sum_{j\in Z} w^1_jh_j$, where $w^1_j=v^1_jT^{n_{u_1}}$ with $v^1_j\in \mathbb{K}[x_1,\dots, x_{n-1}]$ for all $j\in Z$. Then we see that $c_{u_1}F_{u_1}=\sum_{j\in Z}v^1_jF_{u_1}h_j$. So we can rewrite \eqref{Eqnon02} as
      \[c_ih_i-\sum_{j\in Z}(c_j+v^1_jF_{u_1})h_j=\sum_{j=2}^{k}c_{u_j}F_{u_j}.\]
      Since $\dom(\sum_{l=2}^{k}c_{u_l}F_{u_l})=c_{u_2}T^{n_{u_2}}h_{u_2}$ and $\deg_T(c_i)=m_i<n_{u_2}$, we can repeat the above process again. Iterating this process, for all $1\leq l\leq k$, we obtain $c_{u_l}T^{n_{u_l}}=\sum_{j\in Z} w^l_jh_j$, where $w^l_j=v^l_jT^{n_{u_1}}$ with $v^l_j\in \mathbb{K}[x_1,\dots, x_{n-1}]$ for all $j\in Z$. Thus, \eqref{Eqnon02} reduces to:
      \begin{equation}
          c_ih_i=\sum_{j\in Z}(c_j+\sum_{l=1}^{k}v^l_jF_{u_l})h_j.
      \end{equation}
      Then since $\{h_j\mid \ j\in Z\}\cup \{h_i\}$ form a regular sequence in $R$, it follows that $c_i\in (h_j\mid \ j\in Z)=(F_j\mid j\in Z)\subseteq R$. 

      Thus, this completes the proof that if we have a relation in $R$ of the form \eqref{Eqnon0}, then $c_i\in (F_1,\dots, F_{i-1})\subseteq R$, thereby proving that $h_i$ is a non-zero divisor in $R/(F_1,\dots, F_{i-1})$. As derived earlier, this implies $H_1,\dots, H_i$ is a regular sequence in $R[\frac{1}{1-2T}]$, thereby proving the proposition by induction.
    
 \end{proof}

 \begin{remark}\label{Rem:subseq}
     Since $h_1,\dots, h_{n-1}$ is a regular sequence of homogeneous polynomials in the ring $R=\mathbb{K}[x_1,\dots, x_{n-1}, T]$, it follows that $h_1y_1, \dots, h_{n-1}y_{n-1}$ form a regular sequence in $R[y_1,\dots, y_{n-1}]$ by \cite{stacks-project}*{Lemma 10.68.10}. Then a similar argument as that in the proof of Proposition~\ref{MainProp} yields that $y_1\Phi^{\#}[T]_{j_1}(HD^{0}_{n-1}\mathbf{x}_{n-1}), \dots,\ y_{n-1}\Phi^{\#}[T]_{j_{n-1}}(HD^{n-2}_{n-1}\mathbf{x}_{n-1})$ forms a regular sequence in $R[\frac{1}{1-2T}][y_1,\dots, y_{n-1}]$ for any choice of indices $1\leq j_1,\dots, j_{n-1}\leq n$. Again by \cite{stacks-project}*{Lemma 10.68.10} it follows that any subsequence of the sequence in Proposition~\ref{MainProp} is a regular sequence.
 \end{remark}

 We now prove Theorem~\ref{mainthm2} using Proposition~\ref{MainProp}.
 
\begin{theorem}\label{PropNew}
    Let $\mathbb{K}$ be any algebraically closed field. For $n\geq 3$, we have $\dim \bigcap_{i=1}^{n-1}X^{i}_n(\mathbb{K}) \leq 2$.
\end{theorem}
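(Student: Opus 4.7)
The approach is to work with the finite decomposition
\[
\bigcap_{i=1}^{n-1} X^i_n(\mathbb{K}) = \bigcup_{1 \leq j_1, \dots, j_{n-1} \leq n} Z_{\vec j}(\mathbb{K})
\]
obtained from equation \eqref{Eqn8}, where $Z_{\vec j}(\mathbb{K}) := \mathfrak{p}_n^{-1}(W_{\vec j}(\mathbb{K}))$ and $W_{\vec j}(\mathbb{K}) := \bigcap_{i=1}^{n-1} V^n_{n-1}(\Phi^{\#}_{j_i}(HD^{i-1}_{n-1}\mathbf{x}_{n-1})) \subseteq Z^n_n(\mathbb{K}) \cong \mathbb{A}^{n-1}_{\mathbb{K}}(\mathbb{K})$. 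Since the union is over a finite index set, and since the shift projection $\mathfrak{p}_n$ is a trivial $\mathbb{A}^1$-bundle over $Z^n_n$, we have $\dim Z_{\vec j} = 1 + \dim W_{\vec j}$. It therefore suffices to prove that $\dim W_{\vec j} \leq 1$ in $\mathbb{A}^{n-1}_{\mathbb{K}}$ for every $\vec j$.

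The central construction is to realize $W_{\vec j}$ as the specialization (at $t=1$) of a one-parameter family $Y(\vec j) \subseteq \mathbb{A}^{n-1}_{\mathbb{K}} \times \mathbb{A}^1_{\mathbb{K}}$ whose total space is one-dimensional. For $j \neq n$, define a $\mathbb{K}[t]$-algebra endomorphism $\Phi^{\#}_{j,t}$ of $\mathbb{K}[x_1,\ldots,x_{n-1}][t]$ by $x_l \mapsto x_l - t x_j$ for $l \neq j$ and $x_j \mapsto (1 - 2t) x_j$, and set $\Phi^{\#}_{n,t} := \mathrm{id}$. Then $\Phi^{\#}_{j,0} = \mathrm{id}$ and $\Phi^{\#}_{j,1} = \Phi^{\#}_j$ as introduced in Remark~\ref{Rem5new}. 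Set
\[
F_i(\mathbf{x}, t) := \Phi^{\#}_{j_i, t}\bigl(HD^{i-1}_{n-1}\mathbf{x}_{n-1}\bigr) \in \mathbb{K}[\mathbf{x}, t], \qquad Y(\vec j) := V(F_1, \ldots, F_{n-1}),
\]
with $\varphi^{\star} : Y(\vec j) \to \mathbb{A}^1$ the projection onto $t$. By construction $(\varphi^\star)^{-1}(1) = W_{\vec j}$.

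The crucial observation is that $HD^{i-1}_{n-1}\mathbf{x}_{n-1} = e_{n-i}(x_1, \ldots, x_{n-1})$, so at $t=0$ each $F_i$ specializes to the elementary symmetric polynomial $e_{n-i}$. Because $e_1, \ldots, e_{n-1}$ form a regular sequence in $\mathbb{K}[x_1, \ldots, x_{n-1}]$ in any characteristic (the quotient being the coinvariant ring, which is a finite-dimensional $\mathbb{K}$-vector space), the ideal $(F_1, \ldots, F_{n-1}, t) \subseteq \mathbb{K}[\mathbf{x}, t]$ reduces modulo $t$ to an $(x_1, \ldots, x_{n-1})$-primary ideal, hence is itself $(x_1, \ldots, x_{n-1}, t)$-primary and has height $n$. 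Krull's Hauptidealsatz then forces $\operatorname{height}(F_1, \ldots, F_{n-1}) \geq n-1$; being generated by $n-1$ elements, equality holds. Thus every irreducible component of $Y(\vec j)$ has dimension at most $1$, and in particular $\dim W_{\vec j} \leq \dim Y(\vec j) \leq 1$. The bound $\dim \bigcap_{i=1}^{n-1} X^i_n(\mathbb{K}) \leq 2$ follows by taking the (finite) union of the $Z_{\vec j}$.

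\textbf{Main obstacle.} The non-routine step is constructing a deformation that simultaneously recovers the prescribed involution $\Phi^{\#}_{j_i}$ at $t = 1$ and degenerates at $t = 0$ to a genuine complete intersection in $\mathbb{K}[x_1, \ldots, x_{n-1}]$. The affine-linear interpolation above succeeds because the ``trivial'' index choice $\vec j = (n, \ldots, n)$ already produces precisely the elementary symmetric polynomials $e_1, \ldots, e_{n-1}$, which are the prototypical regular sequence. Absent this coincidence — i.e., if the $HD^{i-1}_{n-1}\mathbf{x}_{n-1}$ were not elementary symmetric polynomials — one would need a genuinely clever degeneration argument. Once this specialization at $t=0$ is secured, the upper bound on $\dim Y(\vec j)$ is a routine height computation.
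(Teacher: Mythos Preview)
Your proposal is correct and follows essentially the same approach as the paper: both construct the one-parameter family via the affine-linear deformations $\Phi^{\#}_{j}[T]$ (your $\Phi^{\#}_{j,t}$), specialize at $T=0$ to the elementary symmetric polynomials $e_1,\dots,e_{n-1}$ to obtain a zero-dimensional locus, and then use a height/hypersurface-intersection bound to force the total space $Y(\vec j)$ to be one-dimensional, whence the fiber at $T=1$ has dimension at most $1$. The only cosmetic difference is that the paper phrases the dimension step geometrically (intersecting with the hypersurface $V(T)$ and invoking the affine dimension inequality), whereas you phrase it algebraically via Krull's Hauptidealsatz on the ideal $(F_1,\dots,F_{n-1},t)$; these are equivalent.
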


\begin{proof}
   Recall the $\mathbb{K}$-algebra homomorphisms $\Phi^{\#}[T]_j:\mathbb{K}[x_1,\dots, x_{n-1}]\rightarrow \mathbb{K}[x_1,\dots, x_{n-1}, T]$ induced by $\Phi^{\#}[T]_j(x_i)=x_i-Tx_j$ for all $i\neq j$ and $\Phi^{\#}[T]_j(x_j)=(1-2T)x_j$. For $j=n$, recall that $\Phi^{\#}[T]_n:\mathbb{K}[x_1,\dots, x_{n-1}]\rightarrow \mathbb{K}[x_1,\dots, x_{n-1}, T]$ is the natural inclusion of $\mathbb{K}$-algebras. Now note the following natural $\mathbb{K}$-algebra homomorphism, generated by sending $T$ to its image in the quotient:
    \begin{equation}\label{EqCOR}
    \mathbb{K}[T]\xlongrightarrow{\varphi} \mathcal{O}(j_1,\dots, j_{n-1}):= \frac{\mathbb{K}[x_1,\dots, x_{n-1}, T]}{(\Phi^{\#}[T]_{j_1}(HD^{0}_{n-1}\mathbf{x}_{n-1}), \dots,\ \Phi^{\#}[T]_{j_{n-1}}(HD^{n-2}_{n-1}\mathbf{x}_{n-1}))}.
    \end{equation}
    Let $Y(j_1,\dots, j_{n-1}):= \spec(\mathcal{O}(j_1,\dots, j_{n-1}))$, whereby we obtain the induced morphism $\varphi^{\star}: Y(j_1,\dots, j_{n-1})\rightarrow \mathbb{A}^1_{\mathbb{K}}$ of $1$-dimensional affine $\mathbb{K}$-schemes. We first note that $\varphi^{\star}$ is surjective since for each $\alpha\in \mathbb{A}^1_\mathbb{K}(\mathbb{K})$, the fiber $Y(j_1,\dots, j_{n-1})_\alpha:=Y(j_1,\dots, j_{n-1})\times_{\mathbb{A}^1_{\mathbb{K}}}\spec(\mathbb{K}[T]/(T-\alpha))$ equals
    \[Y(j_1,\dots, j_{n-1})_\alpha= \spec\frac{\mathbb{K}[x_1,\dots, x_{n-1}]}{(\Phi^{\#}[\alpha]_{j_1}(HD^{0}_{n-1}\mathbf{x}_{n-1}), \dots,\ \Phi^{\#}[\alpha]_{j_{n-1}}(HD^{n-2}_{n-1}\mathbf{x}_{n-1}))},\]
    which is non-empty since, for all $1\leq i\leq n-1$, each $\Phi^{\#}[\alpha]_{j_{i}}(HD^{i-1}_{n-1}\mathbf{x}_{n-1})$ obtained by substituting $T=\alpha$ is a homogeneous polynomial in $x_1,\dots, x_{n-1}$. Note that $\Phi^{\#}[1]_{j_i}(HD^{i-1}_{n-1}\mathbf{x}_{n-1})= \Phi^{\#}_{j_i}(HD^{i-1}_{n-1}\mathbf{x}_{n-1})$ for $0<i<n$, whence $\bigcap_{i=1}^{n-1}V(\Phi^{\#}_{j_i}(HD^{i-1}_{n-1}\mathbf{x}_{n-1})$ is the set of $\mathbb{K}$-rational points of the fiber $Y(j_1,\dots, j_{n-1})_1$ over $1$. Now let
    \begin{align}\label{eqndim}
        \mathcal{O}(j_1,\dots, j_{n-1})_1= \frac{\mathbb{K}[x_1,\dots, x_{n-1}, T]}{(\Phi^{\#}[T]_{j_1}(HD^{0}_{n-1}\mathbf{x}_{n-1}), \dots,\ \Phi^{\#}[T]_{j_{n-1}}(HD^{n-2}_{n-1}\mathbf{x}_{n-1}), T-1)},
    \end{align}
     whereby  $Y(j_1,\dots, j_{n-1})_1=\spec \mathcal{O}(j_1,\dots, j_{n-1})_1$. But now note that
     \begin{align}
         \dim \mathcal{O}(j_1,\dots, j_{n-1})_1= \dim \frac{\mathbb{K}[x_1,\dots, x_{n-1}, T, \frac{1}{1-2T}]}{(\Phi^{\#}[T]_{j_1}(HD^{0}_{n-1}\mathbf{x}_{n-1}), \dots,\ \Phi^{\#}[T]_{j_{n-1}}(HD^{n-2}_{n-1}\mathbf{x}_{n-1}), T-1)},
     \end{align}
     since no prime ideal ideal of $\mathbb{K}[x_1,\dots, x_{n-1}, T]$ can contain $T-1$ and $1-2T$ together. But by Proposition~\ref{MainProp}, the ring 
     \[\frac{\mathbb{K}[x_1,\dots, x_{n-1}, T, \frac{1}{1-2T}]}{(\Phi^{\#}[T]_{j_1}(HD^{0}_{n-1}\mathbf{x}_{n-1}), \dots,\ \Phi^{\#}[T]_{j_{n-1}}(HD^{n-2}_{n-1}\mathbf{x}_{n-1}))}\]
     is a $1$-dimensional Cohen-Macaulay ring. This along with \eqref{eqndim} implies that $Y(j_1,\dots, j_{n-1})_1$ is at most $1$-dimensional, and thus so is $\bigcap_{i=1}^{n-1}V(\Phi^{\#}_{j_i}(HD^{i-1}_{n-1}\mathbf{x}_{n-1})$ for any choice of indices $1\leq j_1,\dots, j_{n-1}\leq n$. Thus $\dim \bigcap_{i=1}^{n-1}X^{i}_n(\mathbb{K}) \leq 2$ by \eqref{Eqn8}, since all fibers of the shift projection map $\mathfrak{p}_n:\mathbb{A}^n_{\mathbb{K}}(\mathbb{K})\rightarrow Z^n_n(\mathbb{K})$ are $1$-dimensional. 
\end{proof}

\begin{corollary}\label{maincor1}
    $X_n(\mathbb{K})$ is finite for all fields $\mathbb{K}$ and $n\geq 3$, where $X_n$ is the arithmetic Casas-Alvero scheme.
\end{corollary}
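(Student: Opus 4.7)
The plan is to reduce immediately to the case where $\mathbb{K}$ is algebraically closed, since the natural inclusion $X_n(\mathbb{K}) \hookrightarrow X_n(\overline{\mathbb{K}})$ means finiteness over the algebraic closure implies the general statement. Once $\mathbb{K}$ is algebraically closed, Proposition~\ref{Prop:Vieta} presents $X_n(\mathbb{K}) = \overline{\nu}_n(\mathcal{V}_n(\mathbb{K}))$, with $\overline{\nu}_n$ a quasi-finite morphism, so it suffices to prove $\mathcal{V}_n(\mathbb{K})$ is finite. Since $\mathcal{V}_n(\mathbb{K}) = (\widehat{\mathcal{V}}_n(\mathbb{K})\setminus\{\mathbf{0}\})/\mathbb{G}_m$, the goal reduces to understanding $\widehat{\mathcal{V}}_n(\mathbb{K}) = V(x_n) \cap \bigcap_{i=1}^{n-1} X^i_n(\mathbb{K})$.

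The next step is to apply the decomposition \eqref{Eqn8} from the proof of Theorem~\ref{PropNew}:
\[
\bigcap_{i=1}^{n-1}X^i_n(\mathbb{K}) = \bigcup_{1\leq j_1,\dots,j_{n-1}\leq n} \mathfrak{p}_n^{-1}\!\Bigl(\bigcap_{i=1}^{n-1} V^n_{n-1}(\Phi^{\#}_{j_i}(HD^{i-1}_{n-1}\mathbf{x}_{n-1}))\Bigr).
\]
The key observation is that by Definition~\ref{Def9}, $\mathfrak{p}_n$ restricted to $V(x_n) = Z^n_n$ is the identity: for any $W \subseteq Z^n_n$ one has $V(x_n) \cap \mathfrak{p}_n^{-1}(W) = W$. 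Intersecting the above union with $V(x_n)$ therefore yields
\[
\widehat{\mathcal{V}}_n(\mathbb{K}) = \bigcup_{1\leq j_1,\dots,j_{n-1}\leq n} \bigcap_{i=1}^{n-1} V^n_{n-1}\bigl(\Phi^{\#}_{j_i}(HD^{i-1}_{n-1}\mathbf{x}_{n-1})\bigr).
\]
Each of the finitely many pieces in this union is precisely the fiber of $\varphi^\star : Y(j_1,\dots,j_{n-1}) \to \mathbb{A}^1_\mathbb{K}$ over $T = 1$, which the proof of Theorem~\ref{PropNew} shows to be at most $1$-dimensional.

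Finally, I would use homogeneity: each polynomial $\Phi^{\#}_{j_i}(HD^{i-1}_{n-1}\mathbf{x}_{n-1})$ is homogeneous of degree $n-i$ in $x_1,\dots,x_{n-1}$, so every piece of $\widehat{\mathcal{V}}_n(\mathbb{K})$ is an affine cone, $\mathbb{G}_m$-stable under ordinary scaling. A $1$-dimensional closed cone is a finite union of lines through the origin and so contributes finitely many points to $\mathbb{P}^{n-1}(\mathbb{K})$. Taking the union over the finitely many index tuples, $\mathcal{V}_n(\mathbb{K})$ is finite, and hence so is $X_n(\mathbb{K})$ as the image under $\overline{\nu}_n$ of a finite set. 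The only real content here is the dimension estimate of Theorem~\ref{PropNew}; the main obstacle for this corollary itself is just assembling the pieces, in particular the clean computation that $\mathfrak{p}_n$ acts as the identity on $V(x_n)$, so that intersecting with $V(x_n)$ collapses the $1$-dimensional fibers of $\mathfrak{p}_n$ and brings the ambient dimension down to the expected value.
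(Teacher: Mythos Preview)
Your proposal is correct and follows essentially the same route as the paper's proof: reduce to $\mathbb{K}$ algebraically closed, use Proposition~\ref{Prop:Vieta} and the decomposition~\eqref{Eqn8}, observe that intersecting with $V(x_n)$ kills the $\mathfrak{p}_n^{-1}$ (since $\mathfrak{p}_n$ is the identity on $Z^n_n$), invoke the dimension bound from Theorem~\ref{PropNew} on each piece, and pass to projective space via homogeneity. Your write-up simply makes explicit a couple of steps the paper leaves implicit (the identity $V(x_n)\cap\mathfrak{p}_n^{-1}(W)=W$ and the cone argument for finiteness in $\mathbb{P}^{n-1}$).
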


\begin{proof}
    It suffices to assume $\mathbb{K}$ to be algebraically closed. Recalling the definition of the shift projection map $\mathfrak{p}_n:\mathbb{A}^n(\mathbb{K})\rightarrow Z^n_n(\mathbb{K})$, we see by\eqref{Eqn8}: 
    \begin{equation}\label{Eqvhat}
    \widehat{\mathcal{V}}_n(\mathbb{K})=V(x_n)\cap(\bigcap_{i=1}^{n-1}X^i_n(\mathbb{K}))=\bigcup_{1\leq j_1,\dots, j_{n-1}\leq n}\bigcap_{i=1}^{n-1}V^n_{n-1}(\Phi^{\#}_{j_i}(HD^{i-1}_{n-1}\mathbf{x}_{n-1})).
    \end{equation}
    Thus, by Theorem~\ref{PropNew}, $\widehat{\mathcal{V}}_n(\mathbb{K})\subseteq \mathbb{A}_{\mathbb{K}}^n(\mathbb{K})$ is $1$-dimensional. This is equivalent to $\mathcal{V}_n(\mathbb{K})\subseteq \mathbb{P}^{n-1}_{\mathbb{K}}(\mathbb{K})$ being a finite set of points. By Proposition~\ref{Prop:Vieta}, $X_n(\mathbb{K})=\overline{\nu}_n(\mathcal{V}_n(\mathbb{K}))$, implying $X_n(\mathbb{K})$ is finite.
\end{proof}

\begin{corollary}\label{maincor2}
    $X_n$ is a finite $\mathbb{Z}$-scheme of dimension $\leq 1$ for all $n\geq 3$. In particular, $X_n$ is affine.
\end{corollary}

\begin{proof}
Corollary~\ref{maincor1} implies that $X_n\times_{\spec\mathbb{Z}}\spec\mathbb{K}$ is a finite $\mathbb{K}$-scheme for all fields $\mathbb{K}$. In particular, we note that the fibers $\phi_n^{-1}(p)$ of the structure morphism $\phi_n: X_n\rightarrow \spec\mathbb{Z}$ are finite $\mathbb{Z}/p\mathbb{Z}$-schemes for all $p\in\spec\mathbb{Z}$. Thus, $\phi_n$ is a quasi-finite proper morphism of schemes. By \cite{stacks-project}*{Lemma~37.44.1} (or \cite{EGAIV3}), $\phi_n$ is a finite morphism. For quasi-compactness of $X_n$, note that $\phi_n$ is an affine morphism by \cite{Har}*{Ex II.5.17} and therefore, $X_n=\phi_n^{-1}(\spec\mathbb{Z})$ is affine. The dimension bound follows from \cite{stacks-project}*{Lemma 29.44.9}.
\end{proof}

\begin{remark}\label{rem:Caequiv}
    If $\phi_n: X_n\rightarrow \spec\mathbb{Z}$ is surjective, then $\dim X_n =1$ by \cite{stacks-project}*{Lemma 29.44.9}. Conversely, if $\phi_n$ is not surjective, then since $\operatorname{Im}\phi_n$ is a finite subset of $\spec\mathbb{Z}$, it follows that $\dim X_n=0$. Thus, Conjecture~\ref{con1} is true in degree $n$ if and only if $\dim X_n=0$.
\end{remark}

\begin{remark}
    Let $A_n:=\Gamma(X_n, \mathcal{O}_{X_n})$. Then by Corollary~\ref{maincor2} $X_n=\spec(A_n)$ and $A_n$ is finitely generated as a $\mathbb{Z}$-module. Thus, $A_n=\mathbb{Z}^{r_n}\oplus T$ as $\mathbb{Z}$-modules, where $T$ is the torsion part of $A_n$. Thus, conjecture~\ref{con1} over characteristic $0$ in degree $n$ is equivalent to $A_n$ being torsion as a $\mathbb{Z}$-module. In that case, the set of bad primes for the conjecture in degree $n$ (as defined in \cite{DM}) is equal to the set of primes occurring in the primary decomposition of $A_n$ as a $\mathbb{Z}$-module.
\end{remark}

We can further strengthen Corollary~\ref{maincor1} to give a cohomological upper bound on the size of $X_n(\mathbb{K})$ for any field $\mathbb{K}$.

\begin{corollary}\label{corpointbound}
    Let $X_n(j_1,\dots, j_{n-1}):=Y(j_1,\dots, j_{n-1})_1$ for all $1\leq j_1,\dots, j_{n-1}\leq n$. Let $h^i_c(X_n(j_1,\dots, j_{n-1}), \mathbb{Q}_\ell):=\dim_{\mathbb{Q}_\ell}H^i_c(X_n(j_1,\dots, j_{n-1}), \mathbb{Q}_\ell)$, where $H^i_c(-,\mathbb{Q}_\ell)$ denotes $\ell$-adic cohomology with compact support. Then for any field $\mathbb{K}$ such that $\ell$ is coprime to the characteristic of $\mathbb{K}$, we have:
    \[|X_n(\mathbb{K})|\leq \sum_{1\leq j_1,\dots, j_{n-1}\leq n}h^2_c(X_n(j_1,\dots, j_{n-1}), \mathbb{Q}_\ell).\]
\end{corollary}

\begin{proof}
    We assume $\mathbb{K}$ is algebraically closed. Let $\ell$ be a prime coprime to characteristic of $\mathbb{K}$. Let $X_n(j_1,\dots, j_{n-1}):=\bigcap_{i=1}^{n-1}V^n_{n-1}(\Phi^{\#}_{j_i}(HD^{i-1}_{n-1}\mathbf{x}_{n-1}))\subset \mathbb{A}^n_{\mathbb{K}}$ for any $1\leq j_1, \ j_2,\dots,\ j_{n-1}\leq n$. By Proposition~\ref{MainProp}, $X_n(j_1,\dots, j_{n-1})$ is at most $1$-dimensional. If $\dim X_n(j_1,\dots, j_{n-1})=1$, then by \cite{Poo}*{Corollary~7.5.21}, the number of irreducible components of $X_n(j_1,\dots, j_{n-1})$ is equal to $h^2_c(X_n(j_1,\dots, j_{n-1}), \mathbb{Q}_\ell)$. Similarly, if $\dim X_n(j_1,\dots, j_{n-1})=0$, then $X_n(j_1,\dots, j_{n-1})_{red}\cong \spec\mathbb{K}$, and one can see that $H^2_c(X_n(j_1,\dots, j_{n-1}), \mathbb{Q}_\ell)=0$, which has $\mathbb{Q}_\ell$-dimension $0$. From \eqref{Eqvhat} we see that the number $N$ of irreducible components of $\widehat{\mathcal{V}}_n(\mathbb{K})$ is upper bounded by:
    \begin{equation}\label{EqnN}
    N\leq \sum_{1\leq j_1,\dots, j_{n-1}\leq n}h^2_c(X_n(j_1,\dots, j_{n-1}), \mathbb{Q}_\ell).
    \end{equation}
    From the definition of $\mathcal{V}_n(\mathbb{K})$ (see Proposition~\ref{Prop:Vieta}, it follows that $|\mathcal{V}_n(\mathbb{K})|=N$. Furthermore, $X_n(\mathbb{K})=\overline{\nu_n}(\mathcal{V}_n(\mathbb{K}))$ by the same Proposition, whereby we see that $|X_n(\mathbb{K})|\leq |\mathcal{V}_n(\mathbb{K})|=N$. Combined with \eqref{EqnN}, we are done.
 \end{proof}

 \begin{remark}
     One can provide a much weaker bound on $|X_n(\mathbb{K})|$ depending only on $n$, and independent of characteristic of $\mathbb{K}$. This can be done by bounding $h^2_c(X_n(j_1,\dots, j_{n-1}), \mathbb{Q}_\ell)\leq \sum_ih^i_c(X_n(j_1,\dots, j_{n-1}), \mathbb{Q}_\ell)$ and using \cite{Katz}*{Theorem A}.
 \end{remark}

\subsection{Some rigidity implications of Theorem~\ref{mainthm2}}\label{subsec4.2}

The dimension bound provided by Theorem~\ref{PropNew} also enables us to obtain a description of the structure of $\bigcap_{i=1}^{n-1}X^i_n(\mathbb{K})$ for any algebraically closed field $\mathbb{K}$. This can be interpreted as a general rigidity result towards Conjecture~\ref{con1}.

\begin{corollary}\label{cordesc}
    If $\dim \bigcap_{i=1}^{n-1}X^i_n(\mathbb{K})=2$, then $\bigcap_{i=1}^{n-1}X^i_n(\mathbb{K})$ is a finite union of $2$-dimensional linear subspaces of $\mathbb{A}^n_{\mathbb{K}}$ invariant under the action of $\mathfrak{S}_n$ on $\mathbb{A}^n_{\mathbb{K}}$.
\end{corollary}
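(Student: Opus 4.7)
The plan is to read off the structure of $\bigcap_{i=1}^{n-1}X^i_n(\mathbb{K})$ directly from the decomposition \eqref{Eqn8} and the fiber-dimension analysis already carried out in the proof of Theorem~\ref{PropNew}. First I would fix indices $1\le j_1,\dots,j_{n-1}\le n$ and set $C(j_1,\dots,j_{n-1}):=\bigcap_{i=1}^{n-1}V^n_{n-1}(\Phi^{\#}_{j_i}(HD^{i-1}_{n-1}\mathbf{x}_{n-1}))\subseteq Z^n_n(\mathbb{K})\cong\mathbb{A}^{n-1}_{\mathbb{K}}(\mathbb{K})$. Each $\Phi^{\#}_{j_i}(HD^{i-1}_{n-1}\mathbf{x}_{n-1})$ is a homogeneous polynomial in $\mathbb{K}[x_1,\dots,x_{n-1}]$, so $C(j_1,\dots,j_{n-1})$ is an affine cone with vertex at the origin. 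Moreover, identifying $C(j_1,\dots,j_{n-1})$ with the fiber of $\varphi^{\star}:Y(j_1,\dots,j_{n-1})\rightarrow\mathbb{A}^1_{\mathbb{K}}$ over $T=1$, the computation in the proof of Theorem~\ref{PropNew} gives $\dim C(j_1,\dots,j_{n-1})\leq 1$.

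Next I would exploit that a $1$-dimensional affine cone in $\mathbb{A}^{n-1}_{\mathbb{K}}$ is set-theoretically a finite union of lines through the origin (each irreducible component is itself a cone of dimension one, hence a line through the origin). So for every choice of indices, $C(j_1,\dots,j_{n-1})$ is either $\{\mathbf{0}\}$ or a finite union of lines through the origin in $Z^n_n(\mathbb{K})$. Now the shift projection map $\mathfrak{p}_n:\mathbb{A}^n_{\mathbb{K}}\rightarrow Z^n_n$, being given by $(x_1,\dots,x_n)\mapsto(x_1-x_n,\dots,x_{n-1}-x_n,0)$, is a linear surjection with kernel the diagonal line $\Delta_n=\{(t,\dots,t)\mid t\in\mathbb{K}\}$. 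Hence the preimage under $\mathfrak{p}_n$ of a line through the origin in $Z^n_n$ is a $2$-dimensional linear subspace of $\mathbb{A}^n_{\mathbb{K}}$ (containing $\Delta_n$), while $\mathfrak{p}_n^{-1}(\{\mathbf{0}\})=\Delta_n$ is itself contained in each such $2$-plane.

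Combining these via \eqref{Eqn8},
\[
\bigcap_{i=1}^{n-1}X^i_n(\mathbb{K})=\bigcup_{1\leq j_1,\dots,j_{n-1}\leq n}\mathfrak{p}_n^{-1}\bigl(C(j_1,\dots,j_{n-1})\bigr),
\]
is set-theoretically a finite union of $2$-dimensional linear subspaces of $\mathbb{A}^n_{\mathbb{K}}$ (the contributions from cones equal to $\{\mathbf{0}\}$ collapse to $\Delta_n$, which is absorbed into any one of the $2$-planes coming from a $1$-dimensional $C(j_1,\dots,j_{n-1})$; the hypothesis $\dim\bigcap X^i_n=2$ guarantees at least one such $2$-plane appears). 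Finally, $\mathfrak{S}_n$-invariance is immediate: each $X^i_n(\mathbb{K})$ is $\mathfrak{S}_n$-invariant (as noted in Section~\ref{subsec3.2} and again in Section~\ref{subsec3.3}), hence so is the intersection, and since $\mathfrak{S}_n$ acts on $\mathbb{A}^n_{\mathbb{K}}$ by linear permutation of coordinates, it permutes the finitely many $2$-dimensional linear components among themselves. The only subtlety, which I expect to be the minor technical point rather than an obstacle, is the cone-to-lines reduction in step two; it is a standard fact, but one should verify that when one works only with $\mathbb{K}$-rational points it is still true, which is automatic here since $\mathbb{K}$ is algebraically closed.
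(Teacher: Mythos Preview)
Your argument is correct but follows a genuinely different route from the paper's. The paper does not unpack the cone structure of $C(j_1,\dots,j_{n-1})$ at all; instead it argues pointwise: given any $\alpha\in\bigcap_{i=1}^{n-1}X^i_n(\mathbb{K})\setminus\Delta_n(\mathbb{K})$, it invokes \cite[Lemma~2]{DJ} (affine-transformation invariance of the Casas--Alvero hypothesis) to conclude that the $2$-plane $\langle\alpha,\Delta_n(\mathbb{K})\rangle$ lies entirely in the intersection, and then uses the bound $\dim\bigcap_i X^i_n(\mathbb{K})\le 2$ from Theorem~\ref{PropNew} to force this $2$-plane to be the whole irreducible component through $\alpha$. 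Your approach instead reads the linearity directly off the homogeneity of the $\Phi^{\#}_{j_i}(HD^{i-1}_{n-1}\mathbf{x}_{n-1})$ together with the linearity of $\mathfrak{p}_n$, bypassing the external citation to \cite{DJ} but leaning more heavily on the internal machinery of the proof of Theorem~\ref{PropNew}. Both methods yield the same refinement---each $2$-dimensional component contains the diagonal $\Delta_n$---and both deduce $\mathfrak{S}_n$-invariance from the invariance of each $X^i_n(\mathbb{K})$. Your cone-to-lines step is indeed standard over an algebraically closed field (irreducible components of a homogeneous variety are homogeneous, and an irreducible $1$-dimensional cone is a line), so there is no gap.
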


\begin{proof}
    If $\dim\bigcap_{i=1}^{n-1}X^i_n(\mathbb{K})=2$, there exist points $\mathbf{\alpha}=(\alpha_1,\dots,\alpha_n)$ in the intersection, but not in the diagonal $\Delta_n(\mathbb{K})\subseteq\mathbb{A}^n_{\mathbb{K}}$. By \cite[Lemma~2]{DJ}, for any such point $\mathbf{\alpha}\in \bigcap_{i=1}^{n-1}X^i_n(\mathbb{K})$, the $2$-dimensional linear space $\langle \mathbf{\alpha}, \Delta_n(\mathbb{K})\rangle$ spanned by $\mathbf{\alpha}$ and $\Delta_n(\mathbb{K})$ is also contained in $\bigcap_{i=1}^{n-1}X^i_n(\mathbb{K})$. By dimension constraint, $\langle \mathbf{\alpha}, \Delta_n(\mathbb{K})\rangle$ must be the irreducible component of $\bigcap_{i=1}^{n-1}X^i_n(\mathbb{K})$, containing $\mathbf{\alpha}$. This proves that $\bigcap_{i=1}^{n-1}X^i_n(\mathbb{K})$ is a finite union of $2$-dimensional linear subspaces of $\mathbb{A}^n_{\mathbb{K}}$. By \eqref{Eqn8} and Lemma~\ref{Lem3}, it follows that  $\bigcap_{i=1}^{n-1}X^i_n(\mathbb{K})$ is fixed under the action of transpositions of coordinates. Thus, it follows that it is fixed under the $\mathfrak{S}_n$ action on $\mathbb{A}^n_{\mathbb{K}}$.
\end{proof}

As a consequence of Corollary~\ref{cordesc}, we obtain a topological description of Conjecture~\ref{con1} over an algebraically closed field $\mathbb{K}$.

\begin{corollary}\label{cortop}
    Conjecture~\ref{con1} is true in degree $n$ if and only if $\bigcap_{i=1}^{n-1}X^i_n(\mathbb{K})$ is irreducible in $\mathbb{A}^n_{\mathbb{K}}$.
\end{corollary}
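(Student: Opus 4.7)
The forward direction is immediate from Proposition~\ref{Prop4}: if Conjecture~\ref{con1} holds in degree $n$, then $\bigcap_{i=1}^{n-1} X^i_n(\mathbb{K})$ equals the diagonal $\Delta_n(\mathbb{K}) \cong \mathbb{A}^1_{\mathbb{K}}$, and is therefore irreducible.

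For the converse I argue the contrapositive. Suppose Conjecture~\ref{con1} fails in degree $n$; by Proposition~\ref{Prop4} there exists $\alpha = (\alpha_1, \dots, \alpha_n) \in \bigcap X^i_n \setminus \Delta_n(\mathbb{K})$. The $2$-dimensional linear subspace $L_\alpha := \langle \alpha, \Delta_n(\mathbb{K}) \rangle$ is contained in $\bigcap X^i_n$ by \cite{DJ}*{Lemma~2}, so $\dim \bigcap X^i_n \geq 2$, and Theorem~\ref{mainthm2} forces equality. Corollary~\ref{cordesc} then expresses the intersection as a finite union of $2$-dimensional linear subspaces $L_\beta = \langle \beta, \Delta_n(\mathbb{K}) \rangle$, invariant under the natural $\mathfrak{S}_n$-action. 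To establish reducibility it suffices to exhibit a second such component distinct from $L_\alpha$: since $\sigma(\alpha) \in \bigcap X^i_n$ for every $\sigma \in \mathfrak{S}_n$ by $\mathfrak{S}_n$-invariance, each $L_{\sigma(\alpha)}$ is automatically such a component, so it is enough to produce some $\sigma \in \mathfrak{S}_n$ with $\sigma(\alpha) \notin L_\alpha = \operatorname{span}_{\mathbb{K}}(\alpha, \mathbf{1})$.

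The main technical step is this orbit analysis, which I expect to be the hard part. I would assume for contradiction that the entire $\mathfrak{S}_n$-orbit of $\alpha$ lies in $L_\alpha$, write $\tau_{ij}(\alpha) = a_{ij}\alpha + b_{ij}\mathbf{1}$ for each transposition $\tau_{ij}$, and compare the $k$-th coordinate on both sides for an index $k \notin \{i,j\}$ (which exists because $n \geq 3$). This yields $\alpha_k(1 - a_{ij}) = b_{ij}$ simultaneously for every such $k$, tightly restricting the distinct values among the coordinates of $\alpha$. For $n \geq 4$, or for $n = 3$ over characteristics other than $3$, a short case analysis on how many distinct values $\alpha$ attains combined with a second, judiciously chosen transposition forces $\alpha \in \Delta_n(\mathbb{K})$, the desired contradiction. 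The lone corner case $n = 3$ with $\operatorname{char}(\mathbb{K}) = 3$---where the hyperplane $\{x_1 + x_2 + x_3 = 0\}$ genuinely is a $2$-dimensional $\mathfrak{S}_3$-invariant subspace containing $\Delta_3(\mathbb{K})$---I would handle directly: the Frobenius identity $(X - c)^3 = X^3 - c^3$ in characteristic $3$ forces every monic cubic satisfying the Casas-Alvero hypothesis to be of the form $(X - \alpha)^3$, so Conjecture~\ref{con1} already holds there and the contrapositive is vacuous.
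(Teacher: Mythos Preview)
Your approach is essentially the paper's. Both reduce to the same key claim---a $2$-dimensional linear subspace $L_\alpha=\langle\alpha,\Delta_n(\mathbb{K})\rangle$ with $\alpha$ non-diagonal cannot be $\mathfrak{S}_n$-invariant---and both prove it by writing $\tau_{ij}(\alpha)=a\alpha+b\mathbf{1}$ and reading off the coordinates fixed by $\tau_{ij}$. You package this as the contrapositive (Conjecture~\ref{con1} fails $\Rightarrow$ the orbit of $\alpha$ escapes $L_\alpha$ $\Rightarrow$ reducible) while the paper argues directly (irreducible $\Rightarrow$ single invariant plane $\Rightarrow$ $\alpha$ diagonal), but the computation is identical; where you write ``a short case analysis,'' the paper actually carries it out, deriving $\beta_{12}=-1$, $\lambda_{12}=\alpha_1+\alpha_2$, and $2\alpha_k=\alpha_1+\alpha_2$ for $k\geq 3$.

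One genuine addition on your side: you correctly isolate the corner case $n=3$, $\operatorname{char}(\mathbb{K})=3$, where the hyperplane $\{x_1+x_2+x_3=0\}$ really is an $\mathfrak{S}_3$-invariant $2$-plane containing $\Delta_3(\mathbb{K})$, so the transposition argument alone does not close. The paper's writeup glosses over this; your direct verification that Conjecture~\ref{con1} holds for cubics in characteristic~$3$ (or an appeal to the $n=p^k$ case of \cite{GVB}) is the right patch.
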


\begin{proof}
    The ``only if" implication is clear. If $\bigcap_{i=1}^{n-1}X^i_n(\mathbb{K})$ is irreducible, assume $\dim \bigcap_{i=1}^{n-1}X^i_n(\mathbb{K}) =2$, as else we are done. Then by Corollary~\ref{cordesc}, it is a single $2$-dimensional linear subspace of $\mathbb{A}^n_{\mathbb{K}}$, fixed under the action of $\mathfrak{S}_n$ on $\mathbb{A}^n_{\mathbb{K}}$. Furthermore, $\bigcap_{i=1}^{n-1}X^i_n(\mathbb{K})= \langle \alpha, \Delta_n(\mathbb{K})\rangle$ for any non-diagonal $\alpha=(\alpha_1,\dots, \alpha_n)$ in $\bigcap_{i=1}^{n-1}X^i_n(\mathbb{K})$. Fix such an $\alpha$. Then, for all $\sigma\in\mathfrak{S}_n$, there exist $\lambda_\sigma, \beta_\sigma\in\mathbb{K}$, such that
    \begin{equation}\label{Eqirr}
    (\alpha_{\sigma(1)}, \alpha_{\sigma(2)},\dots, \alpha_{\sigma(n)})=(\beta_{\sigma}\alpha_1+\lambda_{\sigma}, \beta_{\sigma}\alpha_2+\lambda_{\sigma}, \dots, \beta_{\sigma}\alpha_n+\lambda_{\sigma})
    \end{equation}
    In particular, letting $\sigma=\tau_{12}$ be the transposition of $1$ and $2$, we obtain $\alpha_2=\beta_{12}\alpha_1+\lambda_{12}$ and $\alpha_1=\beta_{12}\alpha_2+\lambda_{12}$ forcing $\lambda_{12}=\alpha_1+\alpha_2$ and $\beta_{12}=-1$. Furthermore for all $i\geq 3$, we have $\alpha_i=\beta_{12}\alpha_i+\lambda_{12}$, yielding $\alpha_i=(\alpha_1+\alpha_2)/2$. Thus, using the transposition $\tau_{12}$ on \eqref{Eqirr} forces $\alpha_i$'s to be equal for all $i\neq 1,2$. Using the same argument with transposition $\tau_{ij}$ for other $1\leq i\neq j\leq n$, we see that $\alpha_i$'s must all be equal, contradicting the choice of non-diagonal $\alpha$. Thus, $\bigcap_{i=1}^{n-1}X^i_n(\mathbb{K})$ must be $1$-dimensional. 
\end{proof}

\subsubsection{Conjecture~\ref{con1} as a complete intersection problem} Let $I_{j_1,\dots, j_{n-1}}\subseteq \mathbb{K}[x_1,\dots, x_{n-1}]$ be the ideal $\langle \Phi^{\#}_{j_1}(HD^{0}_{n-1}\mathbf{x}_{n-1}), \dots, \Phi^{\#}_{j_{n-1}}(HD^{n-2}_{n-1}\mathbf{x}_{n-1})\rangle$ for any set of indices $1\leq j_1,\dots, j_{n-1}\leq n$. Then since $I_{j_1,\dots, j_{n-1}}\subseteq (x_1,\dots, x_{n-1})$ is a homogeneous ideal, we can think of $I_{j_1,\dots, j_{n-1}}$ as an ideal in the local ring $\mathbb{K}[x_1,\dots, x_{n-1}]_{(x_1,\dots, x_{n-1})}$. Then, if $\mu(-)$ denotes the minimal number of generators of an ideal, we have $\mu(I_{j_1,\dots, j_{n-1}})=\mu(I_{j_1,\dots, j_{n-1}}/I_{j_1,\dots, j_{n-1}}^2)$ (see \cite[Section~10.2]{IR}). In Theorem~\ref{PropNew}, we prove $n-2\leq \operatorname{ht}(I_{j_1,\dots, j_{n-1}})$, the height of the ideal $I_{j_1,\dots, j_{n-1}}$ in $\mathbb{K}[x_1,\dots, x_{n-1}]$. Thus, we have:
\[ n-2\leq \operatorname{ht}(I_{j_1,\dots, j_{n-1}})\leq \mu (I_{j_1,\dots, j_{n-1}})\leq n-1\]
This yields three possible scenarios:
\begin{enumerate}
    \item $\operatorname{ht}(I_{j_1,\dots, j_{n-1}})= \mu(I_{j_1,\dots, j_{n-1}})=n-2$.
    \item $\operatorname{ht}(I_{j_1,\dots, j_{n-1}})=\mu(I_{j_1,\dots, j_{n-1}})=n-1$.
    \item $\operatorname{ht}(I_{j_1,\dots, j_{n-1}})=n-2$, $\mu(I_{j_1,\dots, j_{n-1}})=n-1$.
\end{enumerate}
Cases $(1)$ and $(2)$ are equivalent to $I_{j_1,\dots, j_{n-1}}$ being complete intersection ideals, while $(3)$ is the almost complete intersection case. By Proposition~\ref{Lem5}, Conjecture~\ref{con1} is equivalent to $I_{j_1,\dots, j_{n-1}}$ being a complete intersection ideal of height $n-1$ for all $1\leq j_1,\dots, j_{n-1}\leq n$, i.e., case~$(2)$. Furthermore, in light of Proposition~\ref{MainProp}, we see that Conjecture~\ref{con1} is equivalent to $T-1$ being a non-zero divisor in $\mathbb{K}[x_1,\dots, x_{n-1}, T, \frac{1}{1-2T}]/(\Phi^{\#}[T]_{j_1}(HD^{0}_{n-1}\mathbf{x}_{n-1}), \dots,\ \Phi^{\#}[T]_{j_{n-1}}(HD^{n-2}_{n-1}\mathbf{x}_{n-1}))$, which is a $1$-dimensional Cohen-Macaulay ring.  

\subsection{Proof of Theorem~\ref{mainthm3}} We end by proving Theorem~\ref{mainthm3} which provides a constraint for the singular $\mathbb{Q}$-rational fibers of the surjective morphism $\varphi^{\star}: Y(j_1,\dots, j_{n-1})\rightarrow \mathbb{A}^1_{\mathbb{K}}$ (obtained from \eqref{EqCOR}) considered in the proof of Theorem~\ref{PropNew} when $\mathbb{K}$ is an algebraically closed field of 
\textit{characteristic $0$}. We define the $\mathbb{Q}$-rational fibers of $\varphi^{\star}$ to be $Y(j_1,\dots, j_{n-1})_{\alpha}:=(\varphi^{\star})^{-1}(\alpha)$ for $\alpha\in \mathbb{A}^1_{\mathbb{K}}(\mathbb{Q})$. Since $\varphi^{\star}: Y(j_1,\dots, j_{n-1})\rightarrow \mathbb{A}^1_{\mathbb{K}}$ is a surjective family over a $1$-dimensional base such that $(\varphi^{\star})^{-1}(\mathbb{A}^1_{\mathbb{K}}(\mathbb{K})\setminus\{1/2\})$ is $1$-dimensional, it follows that the generic (and hence the general) fiber is $0$-dimensional (in fact, a single point) by \cite[Lemma 37.30.1]{stacks-project}. In particular, there are only finitely many singular fibers of $\varphi^{\star}$ i .e., $1$-dimensional fibers over $\mathbb{A}^1_{\mathbb{K}}(\mathbb{K})\setminus\{1/2\}$. Note that Conjecture~\ref{con1} is equivalent to the fiber $Y(j_1,\dots, j_{n-1})_1$ of $\varphi^{\star}$ over $1$ being non-singular.

\begin{remark}
    Note that $\varphi^{\star}: Y(j_1,\dots, j_{n-1})\rightarrow \mathbb{A}^1_{\mathbb{K}}$ is not flat in general and there exist singular $\mathbb{Q}$-rational fibers of $\varphi^{\star}$. For example, if $j_1\neq n$, then one can check that the fiber $(\varphi^{\star})^{-1}(1/2)$ is singular since $\Phi^{\#}_{j_1}[1/2](HD^0_{n-1}\mathbf{x}_{n-1})=0$ when $j_1\neq n$. It then follows that $\dim (\varphi^{\star})^{-1}(1/2)= \dim Y(j_1,\dots, j_{n-1})_{\frac{1}{2}}\geq1$. 
\end{remark}

\begin{theorem}\label{thmsingular}
    Let $\mathbb{K}$ be an algebraically closed field of characteristic $0$. There are no singular $\mathbb{Q}$-rational fibers of $\varphi^{\star}: Y(j_1,\dots, j_{n-1})\rightarrow \mathbb{A}^1_\mathbb{K}$ outside $\{ \frac{1}{m}, \ m\in \mathbb{Z}\setminus\{0\}\}\subseteq \mathbb{Q}$.  Furthermore, for all integers $|m|\geq 2$, there exists a finite set of primes $\mathcal{P}(m)$, such that the fibers $(\varphi^{\star})^{-1}(\frac{1}{m^{p-2}})$ are non-singular for all $p\notin \mathcal{P}(m)$.
\end{theorem}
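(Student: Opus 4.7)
The plan is to extend $\varphi^{\star}$ to an arithmetic family over $\spec\mathbb{Z}$ and invoke upper semicontinuity of fiber dimension, keyed on the observation that the fiber over $T=0$ is always zero-dimensional in every characteristic. Since the defining polynomials $\Phi^{\#}[T]_{j_i}(HD^{i-1}_{n-1}\mathbf{x}_{n-1})$ have integer coefficients, the family extends to a morphism $\varphi_{\mathbb{Z}}: Y_{\mathbb{Z}}(j_1,\dots,j_{n-1}) \to \mathbb{A}^1_{\mathbb{Z}}$, and because the polynomials are homogeneous in the $x$-variables, projectivizing produces a closed subscheme $\widetilde{Y}_{\mathbb{Z}} \subseteq \mathbb{P}^{n-2}_{\mathbb{Z}} \times_{\mathbb{Z}} \mathbb{A}^1_{\mathbb{Z}}$ whose projection $\widetilde{\varphi}: \widetilde{Y}_{\mathbb{Z}} \to \mathbb{A}^1_{\mathbb{Z}}$ is proper. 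The locus $\Sigma \subseteq \mathbb{A}^1_{\mathbb{Z}}$ over which $\widetilde{\varphi}$ has non-empty fiber coincides with the set of points where the affine $\varphi_{\mathbb{Z}}$ has positive-dimensional (singular) fiber, and by Chevalley's theorem on upper semicontinuity of fiber dimension, $\Sigma$ is closed.

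Crucially, the calculation from the proof of Theorem~\ref{PropNew} is characteristic-free: in any characteristic, the fiber over $T=0$ is $\spec\mathbb{K}[x_1,\dots,x_{n-1}]/(e_1,\dots,e_{n-1})$, and the elementary symmetric polynomials $e_1,\dots,e_{n-1}$ in $n-1$ variables have only the trivial common zero, since $e_i(x)=0$ for all $i$ forces $\prod_{i=1}^{n-1}(y - x_i) = y^{n-1}$ and hence $x_i=0$ for all $i$. Consequently $V(T) \cap \Sigma = \emptyset$ inside $\mathbb{A}^1_{\mathbb{Z}}$.

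For Part 1, suppose $\alpha = a/b \in \mathbb{Q}$ (reduced, $b \geq 1$) yields a singular fiber. Then the generic point of $V(bT - a) \subseteq \mathbb{A}^1_{\mathbb{Z}}$ lies in $\Sigma$, so by closedness of $\Sigma$ the entire curve $V(bT - a)$ lies in $\Sigma$. If some prime $p$ divided $a$, then $p \nmid b$ by coprimality, and $V(bT - a) \cap V(p) = \{(p, T)\}$ because $a/b \equiv 0 \pmod p$, contradicting $V(T) \cap \Sigma = \emptyset$. Hence $a$ has no prime divisors; since also $a \neq 0$ (as $T = 0$ itself gives a non-singular fiber), this forces $a = \pm 1$, so $\alpha = 1/m$ with $m = \pm b \in \mathbb{Z}\setminus\{0\}$.

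For Part 2, fix $|m| \geq 2$. Since the integer $m$ is not of the form $1/k$, Part 1 (applied over $\mathbb{Q}$) shows that the generic point of $V(T - m) \subseteq \mathbb{A}^1_{\mathbb{Z}}$ lies outside $\Sigma$. Closedness of $\Sigma$ and irreducibility of $V(T - m)$ then force $V(T - m) \cap \Sigma$ to be a finite set of closed points, corresponding to a finite set of primes $\mathcal{P}_0(m)$. Setting $\mathcal{P}(m) := \mathcal{P}_0(m) \cup \{p : p \mid m\}$, Fermat's little theorem gives $m^{p-2} \equiv m^{-1} \pmod p$ for $p \notin \mathcal{P}(m)$, so the $\mathbb{Q}$-point $T = 1/m^{p-2}$ specializes at $p$ to the closed point $(p, T - m) \notin \Sigma$; closedness of $\Sigma$ then implies the $\mathbb{Q}$-point itself lies outside $\Sigma$, i.e.\ the fiber over $1/m^{p-2}$ is non-singular. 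The principal technical obstacle is the initial setup: semicontinuity of fiber dimension requires properness, which is arranged by projectivizing in the $x$-variables to replace $Y_\mathbb{Z}$ with $\widetilde{Y}_\mathbb{Z}$; once this is done, both parts become essentially formal consequences of the characteristic-free behaviour of the fiber at $T=0$ together with elementary arithmetic.
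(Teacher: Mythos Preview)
Your proof is correct and follows essentially the same strategy as the paper: projectivize in the $x$-variables to obtain a proper morphism, use that the fiber at $T=0$ is zero-dimensional in every characteristic (being cut out by the elementary symmetric polynomials), and transfer this information via properness and reduction modulo a prime dividing the numerator. The only difference is organizational: the paper constructs a separate projective $\mathbb{Z}$-scheme $Y_{r/s}(j_1,\dots,j_{n-1})\subseteq\mathbb{P}^{n-2}_{\mathbb{Z}}$ for each fixed rational value $T=r/s$ and studies its structure morphism to $\spec\mathbb{Z}$, whereas you build a single proper family $\widetilde{Y}_{\mathbb{Z}}\to\mathbb{A}^1_{\mathbb{Z}}$ and argue via its closed image $\Sigma$ and specialization of $\mathbb{Q}$-points; the underlying arithmetic (Fermat's little theorem identifying the reductions of $m$ and $1/m^{p-2}$ modulo $p$) is identical.
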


\begin{proof}
 First let $r/s\in \mathbb{Q}\setminus \{ \frac{1}{m}, \ m\in \mathbb{Z}\setminus\{0\}\}$, in its reduced form (i.e., $r$ and $s$ are coprime). Then the $\mathbb{K}$-rational points of the fiber $Y(j_1,\dots, j_{n-1})_{\frac{r}{s}}$ form the affine algebraic set $$\bigcap_{i=1}^{n-1}V(\Phi^{\#}[r/s]_{j_i}(HD_{n-1}^{i-1}\mathbf{x}_{n-1}))\subseteq \mathbb{A}^{n-1}_{\mathbb{K}}(\mathbb{K}).$$
 Note that this also equals $\bigcap_{i=1}^{n-1}V(s^{n-i}\Phi^{\#}[r/s]_{j_i}(HD_{n-1}^{i-1}\mathbf{x}_{n-1}))$, but for each $1\leq i\leq n-1$, $s^{n-i}\Phi^{\#}[r/s]_{j_i}(HD_{n-1}^{i-1}\mathbf{x}_{n-1})\in \mathbb{Z}[x_1,\dots, x_{n-1}]$, since $\Phi^{\#}[r/s]_{j_i}(HD_{n-1}^{i-1}\mathbf{x}_{n-1})$ is a homogeneous polynomial of degree $n-i$ in the $x_i$'s and $s\Phi^{\#}[r/s]_{j_i}(x_l)=sx_l-rx_{j_i}$ if $l\neq j_i$ and $s\Phi^{\#}[r/s]_{j_i}(x_{j_i})=(s-2r)x_{j_i}$, all of which are polynomials with integer coefficients. Thus, consider
 \[Y_{r/s}(j_1,\dots, j_{n-1}):=\operatorname{Proj}\frac{\mathbb{Z}[x_1,\dots, x_{n-1}]}{(s^{n-1}\Phi^{\#}[r/s]_{j_1}(HD_{n-1}^{0}\mathbf{x}_{n-1}), \dots, s\Phi^{\#}[r/s]_{j_{n-1}}(HD_{n-1}^{n-2}\mathbf{x}_{n-1}))},\]
 which is a projective subscheme of $\mathbb{P}^{n-1}_{\mathbb{Z}}$. By \cite[Proposition II.4.9]{Har}, the structure morphism $Y_{r/s}(j_1,\dots, j_{n-1})\rightarrow \spec(\mathbb{Z})$ is proper and, thus, its image is closed. Let $p\in \spec(\mathbb{Z})$ be a prime which divides $r$, and is therefore coprime to $s$. Then the fiber of the structure morphism over $p$ is $Y_{r/s}(j_1,\dots, j_{n-1})_{p}:=Y_{r/s}(j_1,\dots, j_{n-1})\times_{\spec(\mathbb{Z})}\spec(\mathbb{F}_p)$, which equals (by \cite[Proposition~3.1.9]{QL}) 
 \[Y_{r/s}(j_1,\dots, j_{n-1})_{p}=\operatorname{Proj}\frac{\mathbb{F}_p[x_1,\dots, x_{n-1}]}{(s^{n-1}\Phi^{\#}[r/s]_{j_1}(HD_{n-1}^{0}\mathbf{x}_{n-1}), \dots, s\Phi^{\#}[r/s]_{j_{n-1}}(HD_{n-1}^{n-2}\mathbf{x}_{n-1}))}.\]
 But there exists $x\in\mathbb{Z}$ such that since $sx\equiv 1 \mod p$ and $rx\equiv 0 \mod p$. Replacing $r/s$ by $r'/s'$, where $r'=rx$ and $s'=sx$ we have $s'\equiv 1\mod p$ and $r'\equiv 0\mod p$, and thus, 
 \[Y_{r/s}(j_1,\dots, j_{n-1})_{p}=Y_{r'/s'}(j_1,\dots, j_{n-1})_{p}=\operatorname{Proj}\frac{\mathbb{F}_p[x_1,\dots, x_{n-1}]}{(HD_{n-1}^{0}\mathbf{x}_{n-1}, \dots, HD_{n-1}^{n-2}\mathbf{x}_{n-1})}=\emptyset,\]
 since the radical $\sqrt{(HD_{n-1}^0\mathbf{x}_{n-1},\dots, HD^{n-2}_{n-1}\mathbf{x}_{n-1})}$ is the irrelevant maximal ideal $(x_1,\dots, x_{n-1})\subseteq \mathbb{F}_p[x_1,\dots, x_{n-1}]$. Thus, the structure morphism $Y_{r/s}(j_1,\dots, j_{n-1})\rightarrow \spec(\mathbb{Z})$ is not surjective, whereby it follows that the generic fiber over $0\in \spec(\mathbb{Z})$ is empty as well. By base change, it follows that for any algebraically closed field $\mathbb{K}$ of characteristic $0$, 
 \[\operatorname{Proj}\frac{\mathbb{K}[x_1,\dots, x_{n-1}]}{(s^{n-1}\Phi^{\#}[r/s]_{j_1}(HD_{n-1}^{0}\mathbf{x}_{n-1}), \dots, s\Phi^{\#}[r/s]_{j_{n-1}}(HD_{n-1}^{n-2}\mathbf{x}_{n-1}))}=\emptyset,\]
 or equivalently $\bigcap_{i=1}^{n-1}V(\Phi^{\#}[r/s]_{j_i}(HD_{n-1}^{i-1}\mathbf{x}_{n-1}))\subseteq \mathbb{A}^{n-1}_{\mathbb{K}}(\mathbb{K})$ is just the origin $\{\mathbf{0}\}$, and thus, $0$-dimensional. Thus, the fiber $Y(j_1,\dots, j_{n-1})_{\frac{r}{s}}$ is not singular for any $r/s\in \mathbb{Q}\setminus \{ \frac{1}{n}, \ n\in\mathbb{N}\}$.

 Now consider any integer $m$ such that $|m|\geq 2$. Then for all but finitely many primes $p\in \spec(\mathbb{Z})$, the fiber $Y_m(j_1,\dots, j_{n-1})_p=\emptyset$. Let $\mathcal{P}(m)$ be the union of the set of the finitely many primes over which the fiber of $Y_m(j_1,\dots, j_{n-1})$ is non-empty and the set of prime divisors of $m$. Then for all $p\notin \mathcal{P}(m)$, since $m^{p-1}\equiv 1\mod p$, we have
 \begin{align*}
    Y_{m}(j_1,\dots, j_{n-1})_{p}=\operatorname{Proj}\frac{\mathbb{F}_p[x_1,\dots, x_{n-1}]}{(\Phi^{\#}[m]_{j_1}(HD_{n-1}^{0}\mathbf{x}_{n-1}), \dots, \Phi^{\#}[m]_{j_{n-1}}(HD_{n-1}^{n-2}\mathbf{x}_{n-1}))}\\=\operatorname{Proj}\frac{\mathbb{F}_p[x_1,\dots, x_{n-1}]}{(\Phi^{\#}[\frac{1}{m^{p-2}}]_{j_1}(HD_{n-1}^{0}\mathbf{x}_{n-1}), \dots, \Phi^{\#}[\frac{1}{m^{p-2}}]_{j_{n-1}}(HD_{n-1}^{n-2}\mathbf{x}_{n-1}))} =\emptyset.
 \end{align*}
 Thus,  $Y_{m}(j_1,\dots, j_{n-1})_{p}= Y_{\frac{1}{m^{p-2}}}(j_1,\dots, j_{n-1})_{p}=\emptyset$, whereby the image of the structure morphism $ Y_{\frac{1}{m^{p-2}}}(j_1,\dots, j_{n-1})\rightarrow \spec(\mathbb{Z})$ is a proper closed subset of $\spec(\mathbb{Z})$. This implies that the generic fiber over $0\in \spec(\mathbb{Z})$ is empty as well. Thus, like the preceding argument for $r/s\in \mathbb{Q}\setminus\{\frac{1}{n}, n\in\mathbb{N}\}$, for any algebraically closed characteristic $0$ field $\mathbb{K}$, $\bigcap_{i=1}^{n-1}V(\Phi^{\#}[\frac{1}{m^{p-2}}]_{j_i}(HD_{n-1}^{i-1}\mathbf{x}_{n-1}))\subseteq \mathbb{A}^{n-1}_{\mathbb{K}}(\mathbb{K})$ is just the origin $\{\mathbf{0}\}$, and hence the fiber $Y(j_1,\dots, j_{n-1})_{\frac{1}{m^{p-2}}}$ is non-singular for all $p\notin \mathcal{P}(m)$.
\end{proof}

\section{Intermediate arithmetic Casas-Alvero schemes}\label{sec6}

Throughout this section $\mathbb{K}$ is an algebraically closed field unless otherwise mentioned.

\begin{definition}\label{def:intCA}
 For $1\leq j\leq n-1$, define the $j^{th}$ \textit{intermediate arithmetic Casas-Alvero scheme} of degree $n$ to be the weighted projective $\mathbb{Z}$-scheme $X_{n}[j]\subseteq \mathbb{P}_{\mathbb{Z}}(1,2,\dots, n-1)$ defined by the ideal $\langle \disc^i_n(y_1,\dots, y_{n-1},0), \ 1\leq i\leq j\rangle$, where $\disc^i_n(y_1,\dots, y_{n-1},0)\in \mathbb{Z}^{w}[y_1,\dots, y_{n-1}]$ is the reduced $i^{th}$ discriminant polynomial (see Section~\ref{subsec3.3}).
\end{definition}

Thus, the ordinary $n^{th}$ arithmetic Casas-Alvero scheme $X_n$ is $X_n[n-1]$ in terms of the above definition. Using the notations of Section~\ref{subsec3.3}, recall that $\nu_n(X^i_n(\mathbb{K}))=\Delta^i_n(\mathbb{K})=V(\disc^i_n(y_1,\dots, y_n))\subseteq \mathbb{A}^n_{\mathbb{K}}(\mathbb{K})$ for all $1\leq i\leq n-1$. Then it follows that Proposition~\ref{Prop:Vieta} can be generalised as follows.

\begin{proposition}\label{Prop:Vieta2}
  For any $1\leq j\leq n-1$, let $\widehat{\mathcal{V}}_n[j](\mathbb{K}):= V(x_n)\cap(\bigcap_{i=1}^{j}X^i_n(\mathbb{K}))\subseteq \mathbb{A}^n_{\mathbb{K}}(\mathbb{K})$ and $\mathcal{V}_n[j](\mathbb{K}):=(\widehat{\mathcal{V}}_n[j](\mathbb{K})\setminus\{\mathbf{0}\})/\mathbb{G}_m\subseteq \mathbb{P}^{n-1}_{\mathbb{K}}(\mathbb{K})$. Then $X_n[j](\mathbb{K})=\overline{\nu}_n(\mathcal{V}_n[j](\mathbb{K}))$, where $\overline{\nu}_n: \mathbb{P}^{n-1}_{\mathbb{K}}\rightarrow \mathbb{P}_{\mathbb{K}}(1,2,\dots, n-1)$ is the induced Vieta map.  
\end{proposition}

By Remark~\ref{rem:Caequiv}, Conjecture~\ref{con1} in degree $n$ is equivalent to $\dim X_n[n-1]=\dim X_n=0$. Note that Corollary~\ref{maincor1} provides the dimension bound $\dim X_n[n-1](\mathbb{K})\leq 0$ for all fields $\mathbb{K}$ (with the convention that $\dim X_n[n-1]<0$ if empty). Using Proposition~\ref{Prop:Vieta2} and Proposition~\ref{MainProp} we can obtain a similar dimension bound for $X_n[j](\mathbb{K})$ for all $1\leq j\leq n-1$ and $\mathbb{K}$ algebraically closed.

\begin{corollary}
    Let $\mathbb{K}$ be any algebraically closed field. Then for any $n\geq 2$ and $1\leq j\leq n-1$, we have $n-j-2\leq \dim X_n[j](\mathbb{K})\leq n-j-1$.
\end{corollary}

\begin{proof}
    By Proposition~\ref{MainProp}, for any $1\leq j\leq n-1$, $\Phi^{\#}[T]_{l_1}(HD^{0}_{n-1}\mathbf{x}_{n-1}), \dots,\ \Phi^{\#}[T]_{l_{j}}(HD^{j-1}_{n-1}\mathbf{x}_{n-1})$ is a regular sequence in $\mathbb{K}[x_1,\dots, x_{n-1}, T, \frac{1}{1-2T}]$ for any choice of indices $1\leq l_1,\dots, l_j\leq n$. This, along with Krull's height theorem gives us
    \[n-j-1\leq \dim \mathbb{K}[x_1,\dots, x_{n-1}, T]/(\Phi^{\#}[T]_{l_1}(HD^{0}_{n-1}\mathbf{x}_{n-1}), \dots,\ \Phi^{\#}[T]_{l_{j}}(HD^{j-1}_{n-1}\mathbf{x}_{n-1}), T-1) \leq n-j,\]
    for any choice of indices $1\leq l_1,\dots, l_j\leq n$. This along with Equation~\eqref{Eqn:shift} and Proposition~\ref{Prop:Vieta2} implies the result.
\end{proof}

Note that if $\dim X_n[j](\mathbb{K})=n-j-2$, then it is a complete intersection. For any $n\geq 3$ and algebraically closed field $\mathbb{K}$, it is easy to see that $X_n[1](\mathbb{K})$ and $X_n[2](\mathbb{K})$ are complete intersections. From Proposition~\ref{Lem5} we see that if we have $X_n(\mathbb{K})=X_n[n-1](\mathbb{K})=\emptyset$, then $\dim X_n[j](\mathbb{K})=n-j-2$ for all $1\leq j\leq n-2$, i.e., $X_n[j](\mathbb{K})$ is a complete intersection for all $1\leq j\leq n-2$. In general, we see that if $X_n[j_0](\mathbb{K})$ is a complete intersection, then so is $X_n[j](\mathbb{K})$ for all $1\leq j\leq j_0$. This motivates one to ask the following question.
\begin{question}\label{Question}
    For a given $n\geq 3$ and algebraically closed field $\mathbb{K}$, what is the maximum value $j_C(n)$ of $1\leq j\leq n-1$ such that $X_n[j](\mathbb{K})$ is a complete intersection?
\end{question}

It is clear that Question~\ref{Question} depends only on the characteristic of $\mathbb{K}$. Conjecture~\ref{con1} is then equivalent to saying that when $\mathbb{K}$ has characteristic $0$, then $j_C(n)=n-1$ for all $n\geq 3$. Thus, $j_C(n)$ provides a way to measure the failure of Conjecture~\ref{con1}, when it is not true. We will now try to understand how intermediate arithmetic Casas-Alvero schemes across various degrees control each other's complete intersection behaviour. First we need a technical result.

\begin{proposition}\label{Prop:intCA}
    Let $\mathbb{K}$ be an algebraically closed field. If for some $1\leq  l\leq n-1$ the sequence \[\Phi^{\#}_{j_1}(HD^{0}_{n-1}\mathbf{x}_{n-1}),\ \dots,\ \Phi^{\#}_{j_{l}}(HD^{l-1}_{n-1}\mathbf{x}_{n-1})\] forms a regular sequence in $\mathbb{K}[x_1,\dots, x_{n-1}]$ for all choices of indices $1\leq j_1, \dots, j_{l}\leq n$, then the sequence \[\Phi^{\#}_{j_1}(HD^{0}_{n}\mathbf{x}_{n}),\ \dots,\ \Phi^{\#}_{j_{l}}(HD^{l-1}_{n}\mathbf{x}_{n})\] forms a regular sequence in $\mathbb{K}[x_1,\dots, x_{n-1}, x_n]$ for all choices of indices $1\leq j_1, \dots, j_{l}\leq n+1$.
\end{proposition}

\begin{proof}
    For the purpose of this proof, we will denote the endomorphisms $\Phi^{\#}_j:\mathbb{K}[x_1,\dots, x_{n-1}]\rightarrow \mathbb{K}[x_1,\dots, x_{n-1}]$ defined in Remark~\ref{Rem5new} by $\Phi^{\#}_{j,n-1}$ for all $1\leq j\leq n$. Similarly, we will denote the homomorphisms $\Phi^{\#}[T]_j:\mathbb{K}[x_1,\dots, x_{n-1}]\rightarrow \mathbb{K}[x_1,\dots, x_{n-1}, T]$ defined in Section~\ref{subsec4.1} by $\Phi^{\#}[T]_{j,n-1}$ for all $1\leq j\leq n$. For ease of notation, we also add the convention that $\Phi^{\#}_{j,n-1}=\Phi^{\#}_{n,n-1}$ for all $j\geq n$.
    
    For ease of notation, we will only prove the Proposition for the case $l=n-1$, as the proof of a general $1\leq l\leq n-1$ is akin to that for $l=n-1$. By Proposition~\ref{MainProp}, the hypothesis is equivalent to $\Phi^{\#}[T]_{j_1, n-1}(HD^{0}_{n-1}\mathbf{x}_{n-1}), \dots,\ \Phi^{\#}[T]_{j_{n-1}, n-1}(HD^{n-2}_{n-1}\mathbf{x}_{n-1}),\ T-1$ being a regular sequence in $\mathbb{K}[x_1,\dots, x_{n-1}, T, \frac{1}{1-2T}]$ for any choice of indices $1\leq j_1,\dots, j_{n-1}\leq n$. Similarly, we see that our conclusion is equivalent to $\Phi^{\#}[T]_{j_1, n}(HD^{0}_{n}\mathbf{x}_{n}), \dots, \Phi^{\#}[T]_{j_{n-1}, n}(HD^{n-2}_{n}\mathbf{x}_{n}), \ T-1$ being a regular sequence in $\mathbb{K}[x_1,\dots, x_{n}, T, \frac{1}{1-2T}]$ for all choices of $1\leq j_1, \dots, j_{n-1}\leq n+1$. Now since $j_1,\dots, j_{n-1}$ are some integers between $1$ and $n+1$, there exists some $1\leq l\leq n$ such that $j_i\neq l$ for all $1\leq i\leq n-1$. Let $\tau_{ln}:\mathbb{K}[x_1,\dots, x_n, T]\rightarrow \mathbb{K}[x_1,\dots, x_n, T]$ be the $\mathbb{K}$-algebra automorphism that swaps $x_l$ and $x_n$. Then \begin{align}
        \tau_{ln}(\Phi^{\#}[T]_{j_i,n}(HD^{i-1}_n\mathbf{x}_n))= \begin{cases}
            \Phi^{\#}[T]_{j_i,n}(HD^{i-1}_n\mathbf{x}_n) & \text{if } j_i\neq n\\
            \Phi^{\#}[T]_{l,n}(HD^{i-1}_n\mathbf{x}_n) & \text{if } j_i= n.
        \end{cases}
    \end{align}
    Thus, without loss of generality, we can assume that $j_i\neq n$ for all $1\leq i\leq n-1$ in the sequence $\Phi^{\#}[T]_{j_1, n}(HD^{0}_{n}\mathbf{x}_{n}), \dots, \Phi^{\#}[T]_{j_{n-1}, n}(HD^{n-2}_{n}\mathbf{x}_{n})$. Then we see that
    \begin{equation}\label{Eqn:inductrel}
        \Phi^{\#}[T]_{j_i,n}(HD^{i-1}_n\mathbf{x}_n)=
            (x_n-Tx_{j_i})\Phi^{\#}[T]_{j_i,n-1}(HD^{i-1}_{n-1}\mathbf{x}_{n-1})+\Phi^{\#}[T]_{j_i,n-1}(HD^{i-2}_{n-1}\mathbf{x}_{n-1})
    \end{equation}
    For brevity, let $H_{i,n}:=\Phi^{\#}[T]_{j_i,n}(HD^{i-1}_n\mathbf{x}_n)$ for all $1\leq i\leq n-1$, and $j_i\in \{1,\dots, n+1\}\setminus \{n\}$. Consider the monomial partial ordering $\prec$ on $\mathbb{K}[T, \frac{1}{1-2T}][x_1,\dots, x_n]$ given by $x_1^{\alpha_1}\dots x_n^{\alpha_n}\prec x_1^{\alpha'_1}\dots x_n^{\alpha'_n}$ if and only if $\alpha_n\leq \alpha'_n$. Let $\dom(f)$ denote the sum of the dominant terms of $f\in \mathbb{K}[T, \frac{1}{1-2T}][x_1,\dots, x_n]$. Then by Equation~\eqref{Eqn:inductrel}, we see that $\dom(H_{i,n})=x_nH_{i, n-1}$. We want to show that $H_{1,n}, H_{2,n},\dots, H_{n-1, n}, \ T-1$, or equivalently by \cite{stacks-project}*{Lemma~10.68.9}, $H_{1,n}^{n-1}, H_{2,n}^{n-2},\dots, H_{n-1, n}, \ T-1$ is a regular sequence in $\mathbb{K}[x_1,\dots, x_n, T, \frac{1}{1-2T}]$. By Proposition~\ref{MainProp} and \cite{stacks-project}*{Lemma~10.68.9}, we already know that $H_{1,n}^{n-1}, H_{2,n}^{n-2},\dots, H_{n-1, n}$ forms a regular sequence and thus, it suffices to show that $T-1$ is a non-zero divisor modulo the ideal $(H_{1,n}^{n-1}, H_{2,n}^{n-2},\dots, H_{n-1, n})$ in $\mathbb{K}[T, \frac{1}{1-2T}][x_1,\dots, x_n]$. Note that $\dom(H_{i,n}^{n-i})=x_n^{n-i}H_{i,n-1}^{n-i}$ and by Remark~\ref{Rem:subseq} and our assumption, it follows that any subsequence of $H_{1,n-1}^{n-1}, H_{2,n-1}^{n-2},\dots, H_{n-1, n-1}$ forms a regular sequence as well. Then we obtain the following Lemma analogous to Lemma~\ref{lemreduction}.

    \begin{lemma}\label{lemreduction2}
         Let $\prec$ be the monomial partial ordering on $A:=\mathbb{K}[T, \frac{1}{1-2T}][x_1,\dots, x_n]$ given by $x_1^{\alpha_1}\dots x_n^{\alpha_n}\prec x_1^{\alpha'_1}\dots x_n^{\alpha'_n}$ if and only if $\alpha_n\leq \alpha'_n$. Then for any non-zero $A$-linear combination of the form $\sum_{j\in S}c_jH_{j,n}^{n-j}$ for any subset $S\subseteq \{1,\dots, n-1\}$, there exist $\tilde{c}_j\in A$ such that $\sum_{j\in S}c_jH_{j,n}^{n-j}=\sum_{j\in S}\tilde{c}_jH_{j,n}^{n-j}$ and $\dom(\sum_{j\in S}c_jH_{j,n}^{n-j})=\dom(\sum_{j\in S}\dom(\tilde{c}_j)\dom(H_{j,n}^{n-j}))$.
     \end{lemma}

 We skip the proof of Lemma~\ref{lemreduction2} as it is essentially similar to the proof of Lemma~\ref{lemreduction}.  Now we return to our goal of proving that $T-1$ is a non-zero divisor in $A/(H_{1,n}^{n-1}, H_{2,n}^{n-2},\dots, H_{n-1, n})$. Our strategy will be similar to the proof of Proposition~\ref{MainProp}. For this, suppose given the following equation in $A$:
    \begin{equation}\label{Eqnon0B}
        c(T-1)=\sum_{j=1}^{n-1}c_jH_{j,n}^{n-j},
    \end{equation}
    we have to show $c\in (H_{1,n}^{n-1}, H_{2,n}^{n-2},\dots, H_{n-1, n})\subseteq A$. Applying Lemma~\ref{lemreduction2}, we can assume $\dom(\sum_{j=1}^{n-1}c_jH_{j,n}^{n-j})=\dom(\sum_{j=1}^{n-1}\dom(c_j)\dom(H_{j,n}^{n-1}))$. Then taking $\dom$ of \eqref{Eqnon0B}, we have:
    \begin{equation}\label{Eqnon0Bdom}
        \dom(c)(T-1)=\dom(\sum_{j=1}^{n-1}\dom(c_j)\dom(H_{j,n}^{n-j}))=\dom(\sum_{j=1}^{n-1}\dom(c_j)H_{j,n-1}^{n-j}x_n^{n-j}),
    \end{equation}
    where $\deg_{x_n}(\dom(c))=m$. Since $H_{1,n-1}^{n-1}, H_{2,n-1}^{n-2},\dots, H_{n-1, n-1}, T-1$ form a regular sequence in $A$, \eqref{Eqnon0Bdom} implies that $\dom(c)=\sum_{j=1}^{n-1}b^1_jH_{j,n-1}^{n-j}$, for $b^1_j\in A$ such that $x_n^{m}\mid b^1_j$ for all $1\leq j\leq n-1$. \textbf{Now as long as $\mathbf{m\geq n-1}$}, let $c':=c-\sum_{j=1}^{n-1}\frac{b^1_j}{x_n^{n-j}}H_{j,n}^{n-j}$. Then either $c'=0$, in which case we are done, else $c'\neq 0$ and 
    \[c'(T-1)=\sum_{j=1}^{n-1}(c_j-\frac{b^1_j(T-1)}{x_n^{n-j}})H_{j,n}^{n-j},\]
    which is an equation in $A$ of the form \eqref{Eqnon0B}, but with $\dom(c')<\dom(c)$. Iterating this process, we either reach $c\in (H_{1,n}^{n-1}, H_{2,n}^{n-2},\dots, H_{n-1, n})\subseteq A$, in which case we are done, or $\deg_{x_n}(c)\leq n-2$. Then taking $\dom$ of the new \eqref{Eqnon0B}, we obtain \eqref{Eqnon0Bdom}, but with $\deg_{x_n}(\dom(c))=m\leq n-2$. Then from \eqref{Eqnon0Bdom}, we see that $\dom(c_1)H_{1,n-1}^{n-1}x_n^{n-1}$ gets cancelled, i.e., either $\dom(c_1)=0$ or there exists a subset $S\subseteq \{1,2,\dots, n-1\}$ such that $1\in S$ and $\sum_{j\in S}\dom(c_j)H_{j,n-1}^{n-j}x_n^{n-j}=0$. Then applying Lemma~\ref{lemreduction2} to this subset $S$, we can reduce $\dom(c_1)$. Since $m<n-1$, we can iterate this process, until $\deg_{x_n}(\dom(c_1))=0$ or equivalently $c_{1}\in \mathbb{K}[T,\frac{1}{1-2T}][x_1,\dots, x_{n-1}]$. Then we have
    \begin{align}
        &c_{1}H_{1,n-1}^{n-1}x_n^{n-1}+\sum_{j\in S\setminus\{1\}}\dom(c_j)H_{j,n-1}^{n-j}x_n^{n-j}=0 \implies c_{1}x_n^{n-1}=-\sum_{j\in S\setminus\{1\}}e_jH_{j,n-1}^{n-j}, 
    \end{align}
    for some $e_j\in A$ such that $e_j=e_j'x_n^{n-1}$ for $e_j'\in \mathbb{K}[T,\frac{1}{1-2T}][x_1,\dots, x_{n-1}]$. This is because $\{H_{j,n-1}^{n-j}\}_{j\in S}$ forms a regular sequence in $A$ and $c_{1}\in \mathbb{K}[T,\frac{1}{1-2T}][x_1,\dots, x_{n-1}]$. Then 
    \[c_{1}H_{1,n-1}^{n-1}x_n^{n-1}+\sum_{j\in S\setminus\{1\}}\dom(c_j)H_{j,n-1}^{n-j}x_n^{n-j}=\sum_{j\in S\setminus\{1\}}(\dom(c_j)-e'_jH_{1,n-1}^{n-1}x_n^{j-1})H_{j,n-1}^{n-j}x_n^{n-j} \]
    So \eqref{Eqnon0Bdom} becomes
    \begin{align}
        &\dom(c)(T-1)=\dom(\sum_{j\in S\setminus\{1\}}(\dom(c_j)-e'_jH_{1,n-1}^{n-1}x_n^{j-1})H_{j,n-1}^{n-j}x_n^{n-j}+\sum_{j\notin S}\dom(c_j)H_{j,n-1}^{n-j}x_n^{n-j})\\
        &\implies \dom(c)=\sum_{\substack{j=2}}^{n-1}b^2_jH_{j,n-1}^{n-j},
    \end{align}
    for $b^2_j\in A$ such that $x_n^{m}\mid b^2_j$ for all $2\leq j\leq n-1$, since $\{H_{j,n-1}^{n-j}\mid \ 2\leq j\leq n-1\}$ is a regular sequence in $A$. \textbf{Then as long as $\mathbf{m\geq n-2}$}, letting $c':=c-\sum_{j=2}^{n-1}\frac{b^2_j}{x_n^{n-j}}H_{j,n}^{n-j}$ we can repeat the above process. This same process can be iterated for all $H_{j,n-1}^{n-j}$ for $2\leq j\leq n-1$, till we either have $c\in (H_{1,n}^{n-1}, H_{2,n}^{n-2},\dots, H_{n-1, n})\subseteq A$ or obtain \eqref{Eqnon0B}, i.e., 
    \[c(T-1)=\sum_{j=1}^{n-1}c_jH_{j,n}^{n-j},\]
      with $m=\deg_{x_n}(c)=0$. Then by Lemma~\ref{lemreduction2}, there exist $\tilde{c}_j$ for all $1\leq j\leq n-1$, such that $c(T-1)=\sum_{j=1}^{n-1}\tilde{c_j}H_{j,n}^{n-j}$ and $\dom(c)(T-1)=\dom(\sum_{j=1}^{n-1}\dom(\tilde{c_j})H_{j,n-1}^{n-j}x_n^{n-j})$. Then since $n-j>0=\deg_{x_n}(\dom(c))$ for all $1\leq j\leq n-1$, there exists a subset $S\subseteq \{1,2\dots, n-1\}$ such that $\sum_{j\in S}\dom(\tilde{c}_j)\dom(H_{j,n}^{n-j})=0$. Then applying Lemma~\ref{lemreduction2} to this subset $S$, we can reduce $\deg_{x_n}(\dom(\tilde{c_j}))$ for all $j\in S$. We can iterate this until we reach $\deg_{x_n}(\dom(\tilde{c}_j))=0$ for all $j\in S$  and subsequently for all $1\leq j\leq n-1$. Thus, we are reduced to the case where we have $c, c_i\in \mathbb{K}[T,\frac{1}{1-2T}][x_1,\dots, x_{n-1}]$ in \eqref{Eqnon0B}. Then comparing coefficients of $x_n^{n-1}$ we see that $0=c_1H_{1,n-1}^{n-1}x_n^{n-1}$, which implies $c_1=0$. This reduces us to the equation $c(T-1)=\sum_{j=2}^{n-1}c_jH_{j,n}^{n-j}$, but then comparing coefficients of $x_n^{n-2}$ on either side, we see $c_2=0$. Repeating this process we see $c_j=0$ for all $1\leq j\leq n-1$. Thus, we must have $c=0$, which completes the proof in account of the previous reduction processes.
\end{proof}

As an immediate corollary of Proposition~\ref{Prop:intCA}, we obtain the following.

\begin{corollary}\label{cor:intCA}
    Let $\mathbb{K}$ be an algebraically closed field. If $\dim X_N[l](\mathbb{K})=N-l-2$ for some $N\geq 2$ and $1\leq l\leq N-1$, then for all $n\geq N$, we have $\dim X_n[l](\mathbb{K})=n-l-2$. In particular, if $X_N(\mathbb{K})=X_N[N-1](\mathbb{K})=\emptyset$, then $X_{N+1}[N-1](\mathbb{K})$ is finite.
\end{corollary}

\begin{proof}
    This follows from Equation~\eqref{Eqn:shift}, Proposition~\ref{Prop:Vieta2} and Proposition~\ref{Prop:intCA}.
\end{proof}

\begin{corollary}\label{cor:jC}
    Let $\mathbb{K}$ be algebraically closed of characteristic $0$. Let $q(n)$ be the largest number less than or equal to $n$ which is of the form $p^k$ or $2p^k$ for some prime $p$ and $k\in \mathbb{N}$. Then $j_C(n)\geq q(n)-1$.
\end{corollary}

\begin{proof}
    This follows from Corollary~\ref{cor:intCA} and \cite{GVB}*{Theorem}.
\end{proof}

\begin{remark}
    The implication $X_N(\mathbb{K})=X_N[N-1](\mathbb{K})=\emptyset\implies |X_{N+1}[N-1](\mathbb{K})|<\infty$, strengthens the unconditional result Corollary~\ref{maincor1}. Concretely, the implication says that if Conjecture~\ref{con1} is true in degree $N$ over a field $\mathbb{K}$, then there are only finitely many (up to affine transformations) monic univariate degree $N+1$ polynomials $f$ over $\mathbb{K}$ satisfying the (weaker) condition $\gcd(f, f_i)\neq 1$ for each $i=1,\dots, N-1$.
\end{remark}

\begin{bibdiv}
\begin{biblist}

\bib{CA}{article}{
      author={Casas-Alvero, Eduardo},
       title={Higher order polar germs},
        date={2001},
        ISSN={0021-8693},
     journal={J. Algebra},
      volume={240},
      number={1},
       pages={326\ndash 337},
         url={https://doi.org/10.1006/jabr.2000.8727},
      review={\MR{1830556}},
}

\bib{CLO}{article}{
      author={Castryck, Wouter},
      author={Laterveer, Robert},
      author={Ouna\"{\i}es, Myriam},
       title={Constraints on counterexamples to the {C}asas-{A}lvero conjecture
  and a verification in degree 12},
        date={2014},
        ISSN={0025-5718},
     journal={Math. Comp.},
      volume={83},
      number={290},
       pages={3017\ndash 3037},
         url={https://doi.org/10.1090/S0025-5718-2014-02809-3},
      review={\MR{3246822}},
}

\bib{SC}{article}{
      author={Chellali, Mustapha},
      author={Salinier, Alain},
       title={La conjecture de {C}asas {A}lvero pour les degr\'{e}s {$5p^e$}},
        date={2012},
        ISSN={2067-2071},
     journal={An. Univ. Dun\u{a}rea de Jos Gala\c{t}i Fasc. II Mat. Fiz. Mec.
  Teor.},
      volume={4(35)},
      number={1-2},
       pages={54\ndash 62},
      review={\MR{3136558}},
}

\bib{DJ}{article}{
      author={Draisma, Jan},
      author={de~Jong, Johan~P.},
       title={On the {C}asas-{A}lvero conjecture},
        date={2011},
        ISSN={1027-488X},
     journal={Eur. Math. Soc. Newsl.},
      number={80},
       pages={29\ndash 33},
      review={\MR{2848893}},
}

\bib{TV}{inproceedings}{
      author={Diaz-Toca, Gema~M.},
      author={Gonzalez-Vega, Laureano},
       title={On analyzing a conjecture about univariate polynomials and their
  roots by using {M}aple},
        date={2006},
   booktitle={Proceedings of the {M}aple {C}onference 2006, {W}aterloo ({C}anada), {J}uly
  23-26, 2006},
       pages={81\ndash 98},
}

\bib{GKZ}{book}{
   author={Gelfand, I. M.},
   author={Kapranov, M. M.},
   author={Zelevinsky, A. V.},
   title={Discriminants, resultants and multidimensional determinants},
   series={Modern Birkh\"{a}user Classics},
   note={Reprint of the 1994 edition},
   publisher={Birkh\"{a}user Boston, Inc., Boston, MA},
   date={2008},
   pages={x+523},
   isbn={978-0-8176-4770-4},
   review={\MR{2394437}},
}

\bib{GVB}{article}{
      author={Graf~von Bothmer, Hans-Christian},
      author={Labs, Oliver},
      author={Schicho, Josef},
      author={van~de Woestijne, Christiaan},
       title={The {C}asas-{A}lvero conjecture for infinitely many degrees},
        date={2007},
        ISSN={0021-8693},
     journal={J. Algebra},
      volume={316},
      number={1},
       pages={224\ndash 230},
         url={https://doi.org/10.1016/j.jalgebra.2007.06.017},
      review={\MR{2354861}},
      label={BLSW07}
}

\bib{EGAIV3}{article}{
   author={Grothendieck, Alexander},
   title={\'{E}l\'{e}ments de g\'{e}om\'{e}trie alg\'{e}brique. IV.
   \'{E}tude locale des sch\'{e}mas et des morphismes de sch\'{e}mas. III (R\'{e}dig\'{e}s avec la collaboration de J. Dieudonn\'{e})},
   journal={Inst. Hautes \'{E}tudes Sci. Publ. Math.},
   number={28},
   date={1966},
   pages={255},
   issn={0073-8301},
   review={\MR{0217086}},
}

\bib{Har}{book}{
   author={Hartshorne, Robin},
   title={Algebraic geometry},
   series={},
   volume={No. 52.},
   publisher={Springer-Verlag, New York-Heidelberg},
   date={1977},
   pages={xvi+496},
   isbn={0-387-90244-9},
   review={\MR{0463157}},
}

\bib{HS}{article}{
   author={Schmidt, F. K.},
   author={Hasse, H.},
   title={Noch eine Begr\"{u}ndung der Theorie der h\"{o}heren
   Differentialquotienten in einem algebraischen Funktionenk\"{o}rper einer
   Unbestimmten. (Nach einer brieflichen Mitteilung von F.K. Schmidt in
   Jena)},
   language={German},
   journal={J. Reine Angew. Math.},
   volume={177},
   date={1937},
   pages={215--237},
   issn={0075-4102},
   review={\MR{1581557}},
   doi={10.1515/crll.1937.177.215},
}

\bib{TH}{arXiv}{ 
      author={Hosgood, Timothy},
      title={An introduction to varieties in weighted projective space},
      date={2020},
      eprint={math.AG/1604.02441},
      archivePrefix={arXiv},
} 

\bib{IR}{book}{
   author={Ischebeck, Friedrich},
   author={Rao, Ravi A.},
   title={Ideals and reality},
   series={Springer Monographs in Mathematics},
   note={Projective modules and number of generators of ideals},
   publisher={Springer-Verlag, Berlin},
   date={2005},
   pages={xiv+336},
   isbn={3-540-23032-7},
   review={\MR{2114392}},
}

\bib{Katz}{article}{
   author={Katz, Nicholas M.},
   title={Sums of Betti numbers in arbitrary characteristic},
   note={Dedicated to Professor Chao Ko on the occasion of his 90th
   birthday},
   journal={Finite Fields Appl.},
   volume={7},
   date={2001},
   number={1},
   pages={29--44},
   issn={1071-5797},
   review={\MR{1803934}},
   doi={10.1006/ffta.2000.0303},
}

\bib{QL}{book}{
   author={Liu, Qing},
   title={Algebraic geometry and arithmetic curves},
   series={Oxford Graduate Texts in Mathematics},
   volume={6},
   note={Translated from the French by Reinie Ern\'{e};
   Oxford Science Publications},
   publisher={Oxford University Press, Oxford},
   date={2002},
   pages={xvi+576},
   isbn={0-19-850284-2},
   review={\MR{1917232}},
}

\bib{Poo}{book}{
   author={Poonen, Bjorn},
   title={Rational points on varieties},
   series={Graduate Studies in Mathematics},
   volume={186},
   publisher={American Mathematical Society, Providence, RI},
   date={2017},
   pages={xv+337},
   isbn={978-1-4704-3773-2},
   review={\MR{3729254}},
   doi={10.1090/gsm/186},
}

\bib{DM}{article}{
   author={Schaub, Daniel},
   author={Spivakovsky, Mark},
   title={On the set of bad primes in the study of the Casas--Alvero
   conjecture},
   journal={Res. Math. Sci.},
   volume={11},
   date={2024},
   number={2},
   pages={Paper No. 31},
   issn={2522-0144},
   review={\MR{4729911}},
   doi={10.1007/s40687-024-00444-z},
}

\bib{DM2}{arXiv}{
      author={Schaub, Daniel},
      author={Spivakovsky, Mark},
      title={A note on the Casas-Alvero Conjecture}, 
      date={2023},
      eprint={math.AC/2312.08742},
      archiveprefix={arXiv},
    
}

\bib{stacks-project}{misc}{
  author       = {The {Stacks project authors}},
  title        = {The Stacks project},
  url          = {https://stacks.math.columbia.edu},
  year         = {2023},
  label        =  {SP23},
}

\end{biblist}
\end{bibdiv}

\end{document}